\newtheorem{thm}{Theorem}[section]
\newtheorem{lemma}[thm]{Lemma}
\newtheorem{cor}[thm]{Corollary}
\newtheorem{prp}[thm]{Proposition}
\newtheorem{df}[thm]{Definition}
\newtheorem{rem}[thm]{Remark}
\newtheorem{ex}[thm]{Example}
\newcommand{\tr}{{\rm tr}}
\newcommand{\Aut}{{\rm Aut}}
\newcommand{\Dim}{{\rm Dim}}
\newcommand{\Mod}{\mbox{{\rm Mod}}}
\newcommand{\Hom}{\mbox{{\rm Hom}}}
\newcommand{\End}{\mbox{{\rm End}}}
\newcommand{\Mor}{\mbox{{\rm Mor}}}
\newcommand{\res}{\mbox{${\rm res}_{\downarrow H}^G$}}
\newcommand{\tensind}{\mbox{${\rm Ind}_H^{\otimes G}$}}
\newcommand{\ind}{\mbox{${\rm Ind}_H^{\times G}$}}
\newcommand{\Halg}{\mbox{$\fld_H{\tt alg}$}}
\newcommand{\Galg}{\mbox{$\fld_G{\tt alg}$}}
\newcommand{\GAalg}{\mbox{$A_G{\tt alg}$}}
\newcommand{\alg}{\mbox{$\fld{\tt alg}$}}
\newcommand{\A}{{\mathcal A}}
\newcommand{\B}{{\mathcal B}}
\newcommand{\D}{{\mathcal D}}
\newcommand{\F}{{\mathcal F}}
\newcommand{\R}{{\mathcal R}}
\newcommand{\C}{\mathfrak{C}}
\newcommand{\Pp}{\mathcal{P}}
\newcommand{\ts}{\mathfrak{Ts}}
\newcommand{\fld}{{k}}
\newcommand{\id}{{\rm id}}
\newcommand{\chr}{{\rm char}}
\author{Peter Fleischmann and Chris Woodcock}
\thanks{Corresponding author Peter Fleischmann}
\address{School of Mathematics, Statistics and Actuarial Science \\
University of Kent \\
Canterbury CT2 7NF \\
United Kingdom}
\email{P.Fleischmann@kent.ac.uk}
\email{C.F.Woodcock@kent.ac.uk}
\title [Local Galois extensions]{Modular group actions on algebras and $p$-local\\ Galois extensions for
finite groups}
\begin{document}

\begin{abstract}
Let $\fld $ be a field of positive characteristic $p$ and let $G$ be a finite group.
In this paper we study the category $\ts_G$ of finitely generated commutative $\fld$-algebras
$A$ on which $G$ acts by algebra automorphisms with surjective trace.
If $A=\fld[X]$, the ring of regular functions of a variety $X$, then trace-surjective group actions on $A$
are characterized geometrically by the fact that all point stabilizers on $X$ are $p'$-subgroups
or, equivalently, that $A^P\le A$ is a Galois extension for every Sylow $p$-group of $G$.
We investigate categorical properties of $\ts_G$, using a version of Frobenius-reciprocity
for group actions on $k$-algebras, which is based on tensor induction for modules. We also describe projective generators
in $\ts_G$, extending and generalizing the investigations started in \cite{nonlin},
\cite{locmod} and \cite{universal} in the case of $p$-groups. As an application we show
that for an abelian or $p$-elementary group $G$ and $\fld$ large enough, there is always a faithful (possibly nonlinear) action on a polynomial
ring such that the ring of invariants is also a polynomial ring. This would be false for \emph{linear} group actions by a result
of Serre. If $A$ is a normal domain and $G\le\Aut_\fld(A)$ an arbitrary finite group, we show
that $A^{{\rm O}_p(G)}$ is the integral closure of $k[{\rm Soc}(A)]$, the subalgebra of $A$ generated by the
simple $\fld G$-submodules in $A$. For $p$-solvable groups this leads to a structure theorem on trace-surjective
algebras, generalizing the corresponding result for $p$-groups in \cite{nonlin}.
\end{abstract}

\keywords{Modular invariant theory; Representation theory}

\maketitle
\setcounter{section}{-1}
\small
\section{Introduction} \label{Intro}

Let ${G}$ be an arbitrary finite group, $\fld$ a field and $A$ a commutative
$\fld$-algebra on which ${G}$ acts by $\fld$-algebra automorphisms;
then we call $A$ a $\fld-{G}$ algebra. By $\Galg$ we denote the category of
commutative $\fld-G$ algebras with $G$-equivariant algebra homomorphisms; if $G=1$, we set
$\fld{\tt alg}:=\fld_1{\tt alg}$ to denote the category of all commutative $\fld$-algebras.
Let $A^{G}:=\{a\in A\ |\ ag=a\ \forall g\in {G}\}$ be the \emph{ring of invariants},
the primary object of study in invariant theory.

One of the main challenges is to describe
structural properties of the ring $A^{G}$, assuming that $A$ is ``nice", for example
a Cohen-Macaulay ring or a polynomial ring.
Clearly $A^{G}$ is a subring of $A$ as well a submodule of the
$A^{G}$-module $A$ (denoted by $_{A^{G}}A$). It is easy to see that the ring extension $A^{{G}}\le A$ is integral.
If moreover $A\in\Galg$ is finitely generated as $\fld$-algebra, then so is $A^{{G}}$, by a
classical result of Emmy Noether (\cite{N2}).

Let $R$ be an arbitrary commutative ring with subring $S\le R$. A surjective homomorphism of $S$-modules,
$r:\ {_SR}\to {_SS}$, is called a
\emph{Reynolds} operator, if $r_{|S}={\rm id}_S$, or equivalently,
$r^2=r\in {\rm End}(_SR)$. The existence of a Reynolds operator is obviously
equivalent to the fact that $S$ is a direct summand of $R$ as an $S$-module.
The following well known result of Hochster-Eagon is of fundamental importance in
invariant theory: (see \cite{BH}, Theorem 6.4.5, pg. 282)
\begin{thm}(Hochster-Eagon)\label{Hochster-Eagon}
Let $R$ be a Cohen-Macaulay ring with subring $S\le R$ and Reynolds operator $r$, such
that $R$ is integral over $S$. Then
if $R$ is Cohen-Macaulay, so is $S$.
\end{thm}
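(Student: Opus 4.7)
The plan is to show that $S_\mathfrak{m}$ is Cohen--Macaulay for every maximal ideal $\mathfrak{m}\le S$ and then invoke the local character of the Cohen--Macaulay property. The central tool throughout is that the Reynolds operator $r$ realises $S$ as an $S$-module direct summand of $R$ and hence survives every localisation at a multiplicative subset of $S$. The overall strategy is: localise at $\mathfrak{m}$, promote a system of parameters of $S_\mathfrak{m}$ to a regular sequence in the localised $R$, and use the splitting to pull regularity back to $S_\mathfrak{m}$.

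First I would set $U:=S\setminus\mathfrak{m}$; the extension $S_\mathfrak{m}\hookrightarrow U^{-1}R$ is still integral and the induced map $U^{-1}r:U^{-1}R\to S_\mathfrak{m}$ is still an $S_\mathfrak{m}$-linear retraction. By lying-over and incomparability every maximal ideal $\mathfrak{n}\le U^{-1}R$ satisfies $\mathfrak{n}\cap S_\mathfrak{m}=\mathfrak{m} S_\mathfrak{m}$, and each $(U^{-1}R)_\mathfrak{n}$ remains Cohen--Macaulay; integrality together with the direct-summand property identifies $\dim(U^{-1}R)_\mathfrak{n}=\dim S_\mathfrak{m}=:d$.

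Next I would pick a system of parameters $x_1,\ldots,x_d\in\mathfrak{m} S_\mathfrak{m}$. Since $\sqrt{(x_1,\ldots,x_d)S_\mathfrak{m}}=\mathfrak{m} S_\mathfrak{m}$, the extended ideal has radical containing $\mathfrak{m} U^{-1}R$, which lies in the Jacobson radical of $U^{-1}R$; combined with the dimension count, $x_1,\ldots,x_d$ is a system of parameters at each maximal $\mathfrak{n}$ of $U^{-1}R$ and, by Cohen--Macaulayness of $R$, a regular sequence there, hence regular on $U^{-1}R$. Finally, if $a\in S_\mathfrak{m}$ satisfies $x_{i+1}a\in(x_1,\ldots,x_i)S_\mathfrak{m}$, then by $R$-regularity one may write $a=\sum_{j\le i}x_jy_j$ with $y_j\in U^{-1}R$; applying $U^{-1}r$ and using its $S_\mathfrak{m}$-linearity gives $a=r(a)=\sum_{j\le i}x_j\,r(y_j)\in(x_1,\ldots,x_i)S_\mathfrak{m}$. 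Thus $x_1,\ldots,x_d$ is an $S_\mathfrak{m}$-regular sequence of length $\dim S_\mathfrak{m}$, and $S_\mathfrak{m}$ is Cohen--Macaulay as required.

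The main obstacle is the middle step: verifying that a system of parameters of $S_\mathfrak{m}$ genuinely becomes a system of parameters at every maximal ideal of $U^{-1}R$. This hinges on the fact that a pure (direct-summand) extension enjoys enough dimension-theoretic regularity --- in particular going-down and comparison of heights --- to force $\dim(U^{-1}R)_\mathfrak{n}=d$ uniformly. Once that technical point is in place, the closing Reynolds argument is purely formal and relies only on the $S_\mathfrak{m}$-linearity of $r$.
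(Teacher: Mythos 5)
The paper does not prove this statement; it quotes it from \cite{BH} (Theorem 6.4.5), so your argument has to stand on its own. Your first and last steps are fine: localisation at a maximal ideal $\mathfrak{m}$ of $S$ preserves all the hypotheses, and the concluding descent --- applying the localised Reynolds operator to a relation $a=\sum_{j\le i}x_jy_j$ with $y_j\in U^{-1}R$ to get $a=\sum_{j\le i}x_j\,r(y_j)\in(x_1,\dots,x_i)S_{\mathfrak{m}}$ --- is exactly the classical use of the splitting. The gap is the step you flag yourself: the claim that a split integral extension ``enjoys going-down and comparison of heights'', so that $\dim (U^{-1}R)_{\mathfrak{n}}=\dim S_{\mathfrak{m}}$ for \emph{every} maximal ideal $\mathfrak{n}$ of $U^{-1}R$. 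This is false. Let $S=k[x,y]$ and $R=k[x,y]\times k$, with $S$ embedded by $f\mapsto (f,f(0,0))$. Then $R$ is Noetherian and Cohen--Macaulay, it is integral over $S$ (it equals $S[e]$ for the idempotent $e=(1,0)$), and $(g,c)\mapsto (g,g(0,0))$ is an $S$-linear Reynolds operator. Nevertheless the maximal ideal $k[x,y]\times 0$ of $R$ has height $0$ and contracts to $(x,y)$, which has height $2$; after localising at $\mathfrak{m}=(x,y)$ the ring $U^{-1}R=S_{\mathfrak{m}}\times k$ still has a zero-dimensional maximal ideal lying over $\mathfrak{m}S_{\mathfrak{m}}$, and the system of parameters $x,y$ of $S_{\mathfrak{m}}$ is \emph{not} a regular sequence on $U^{-1}R$: already $(x,0)\cdot(0,1)=0$, so $x$ is a zerodivisor on $R$. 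Hence the middle of your argument --- promoting a system of parameters of $S_{\mathfrak{m}}$ to a regular sequence on $U^{-1}R$ --- genuinely breaks down. The example does not contradict the theorem (here $S$ is of course Cohen--Macaulay), but it shows your proof does not establish it: lying over and incomparability only give $\dim (U^{-1}R)_{\mathfrak{n}}\le\dim S_{\mathfrak{m}}$, and purity does not supply the reverse inequality.

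Two smaller points. First, in the only situation where the paper applies the theorem (Corollary \ref{ts_cm}, with $S=A^G$ and Reynolds operator $a'\mapsto\tr(aa')$), your argument can be completed: for $A$ integral over $A^G$ the group $G$ acts transitively on the primes of $A$ lying over a fixed prime of $A^G$, which does yield going-down and the uniform equality $\dim (U^{-1}A)_{\mathfrak{n}}=\dim (A^G)_{\mathfrak{m}}$; with that in place your proof is the standard one. For the abstract Reynolds-operator statement you need an argument that does not require the chosen parameters to be a regular sequence on all of $U^{-1}R$, so you should either add a hypothesis that restores going-down or follow the treatment in \cite{BH} rather than the route above. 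Second, you use Noetherianity of $S$ throughout (systems of parameters, height theory) without comment; it is not a stated hypothesis, but it does follow from the splitting, since $IR\cap S=I$ for every ideal $I\le S$ makes $I\mapsto IR$ an injective, inclusion-preserving map on ideals, so ascending chains in $S$ stabilise. You should record this.
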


Let $\tr:=\tr_{G}:\ A\to A^{G},\ a\mapsto \sum_{g\in {G}} ag$
be the \emph{transfer map} or
\emph{trace map}. This is obviously a homomorphism of $A^{G}$-modules, therefore
the image ${\rm tr}(A)\unlhd A^{G}$ is an ideal in $A^{G}$. If
${\rm tr}(a)=1$ for some $a\in A$, then for any $a'\in A$ we have
${\rm tr}(a\cdot {\rm tr}(aa'))={\rm tr}(aa')\cdot {\rm tr}(a)={\rm tr}(aa')$,
hence the map $A\to A^{{G}},\ a'\mapsto {\rm tr}(aa')$ is a Reynolds operator.

This motivates the following

\begin{df}\label{ts-alg_df}
An algebra $A\in\Galg$ such that $\tr(A)=A^{G}$ will be called a
{\bf trace-surjective} $\fld-{G}$-algebra.
With $\ts:=\ts_{G}$ we denote the full subcategory of $\Galg$ consisting
all trace-surjective algebras which are {\bf finitely generated} as $\fld$-algebras.
\end{df}

Now we obtain the following well known consequence of Theorem \ref{Hochster-Eagon}:
\begin{cor}\label{ts_cm}
Let $A\in\ts_{G}$ be a Cohen-Macaulay ring. Then so is $A^{{G}}$.
\end{cor}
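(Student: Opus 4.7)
The plan is to reduce the corollary directly to the Hochster--Eagon theorem (Theorem \ref{Hochster-Eagon}) by verifying its three hypotheses for the pair $S = A^G \le A = R$.

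First, I would observe that $A$ being Cohen--Macaulay is given. Second, the integrality of $A^G \le A$ is already noted in the introduction: every element of $A$ satisfies the monic polynomial $\prod_{g \in G}(T - ag) \in A^G[T]$, so the extension is integral. Third, and this is the crucial input of the trace-surjectivity hypothesis, I need to produce a Reynolds operator $r : A \to A^G$. Since $A \in \ts_G$, we have $\tr(A) = A^G$, so in particular there exists $a \in A$ with $\tr(a) = 1$. The paragraph immediately preceding Definition \ref{ts-alg_df} then exhibits the $A^G$-linear map
\[
r : A \to A^G, \qquad a' \mapsto \tr(a a'),
\]
and verifies via $\tr(a \cdot \tr(a a')) = \tr(a a') \cdot \tr(a) = \tr(a a')$ that $r^2 = r$ and $r_{|A^G} = \id_{A^G}$, so $r$ is a Reynolds operator.

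With all three hypotheses in hand, Theorem \ref{Hochster-Eagon} applies and gives that $A^G$ is Cohen--Macaulay, which is the conclusion.

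There is no real obstacle here; the corollary is essentially a bookkeeping statement combining Hochster--Eagon with the two elementary observations that $A^G \le A$ is integral and that trace-surjectivity produces a Reynolds operator. The only thing to be careful about is to state explicitly \emph{why} $\tr(A) = A^G$ yields a Reynolds operator (namely, by lifting $1 \in A^G$ along $\tr$ and using the projection formula $\tr(a \cdot x) = x \cdot \tr(a)$ for $x \in A^G$), since the Hochster--Eagon theorem is phrased in terms of the existence of such an operator rather than in terms of traces.
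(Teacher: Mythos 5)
Your proof is correct and follows exactly the route the paper intends: the corollary is stated as an immediate consequence of Theorem \ref{Hochster-Eagon}, with the Reynolds operator $a' \mapsto \tr(aa')$ already constructed in the paragraph before Definition \ref{ts-alg_df} and the integrality of $A^G \le A$ noted earlier in the introduction. No discrepancies.
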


An important class of $\fld-{G}$-algebras arises in the following way: Let $V\ne 0$ be a finite dimensional $\fld$-vector space, ${G}\le {\rm GL}(V)$ a finite linear group
and set $A:={\rm Sym}(V^*)=\oplus_{i=0}^\infty A_i$ the graded symmetric algebra
over the dual space $V^*$. Then $A$ is isomorphic to the polynomial ring
$\fld[X_1,\cdots,X_n]$ with $V^*=\oplus_{i=1}^n \fld X_i$, on which
${G}$ acts by the graded algebra homomorphisms which extend the dual
action on $V^*$. We will refer to this class of group actions as \emph{linear} group
actions. In that case it is easy to see that $\tr$ is surjective if and only if $p:=\chr(\fld)$ does not divide the group order
$|{{G}}|$. It $p\not|\ |G|$ then $1/|G|\cdot \tr$ is a Reynolds operator and  Corollary
\ref{ts_cm} implies the well known result that the invariant rings
${\rm Sym}(V^*)^{{G}}$ are Cohen-Macaulay rings.
If $p$ divides $|{{G}}|$, then the image ${\rm tr}(A)$ does not contain the constants
$A_0\cong \fld$, so it is a proper ideal of $A^{{G}}$. In particular if $|{G}|=p^s$,
then due to a theorem of Kemper (\cite{kem:h}),
$A^{{G}}$ can only be a Cohen-Macaulay ring if the linear group
${G}\le {\rm GL}(V)$ is generated by \emph{bireflections}, i.e. linear transformations
with fixed-point space of codimension $\le 2$ (see also
\cite{Campbell_Wehlau_book}, Theorem 9.2.2. page 170).

A similar situation arises if one asks under what conditions the property that
$A$ is a regular ring or a polynomial ring is inherited by the ring of invariants.
If $A={\rm Sym}(V^*)$ and $A^{G}$ is a polynomial ring, then,
due to a celebrated result by Serre (\cite{bou}),
${G}$ is generated by pseudo-reflections, i.e. linear transformations with
fixed point space of codimension one. If ${\rm char}(\fld)\not|\ |{G}|$,
the converse also holds by the well-known theorem of Chevalley-Shephard-Todd and Serre (see e.g. \cite{B} or
\cite{Lsb}).

In the rest of this paper, unless explicitly said otherwise, $\fld$ will be a field of characteristic $p>0$ and $G$ a finite group of order $|G|$. Then all non-diagonalizable pseudo-reflections, the \emph{transvections}, have order $p$. If $g\in {\rm Gl}(V)$ is a unipotent bireflection, then $g-1$ is nilpotent with
$(V)(g-1)=:W$ a $g$-stable subspace of dimension $\le 2$.
Therefore $(V)(g-1)^3=(W)(g-1)^2=0$, hence $(V)(g^p-1)=(V)(g-1)^p=0$ if $p>2$.
This shows that bireflections of $p$-power order have order $p$ if $p>2$.
It therefore follows from the results by Kemper, that for any finite
$p$-group $G\le {\rm Gl}(V)$ which is not generated by elements of
order $p$, the invariant ring ${\rm Sym}(V^*)^G$ is not Cohen-Macaulay, let alone
a polynomial ring.

The situation changes completely if we remove the condition that $G$ is a linear
group, and allow for only ``mildly nonlinear" actions:

\begin{ex}\label{C_4_ex}
Let $p=2$, $G=\langle g\rangle\cong C_4$,
$A=\mathbb{F}_2[x_1,x_2,x_3]$ and $g:\ x_1\mapsto x_1$,\ $x_2\mapsto x_2+x_1;$\ $x_3\mapsto x_3+x_2.$ Since $g$ is not a transvection, $A^G$ cannot be polynomial by Serre's theorem. Indeed
$A^G=\mathbb{F}_2[x_1,f_2,f_3,f_4]$ with $f_2:=x_1x_2+x_2^2$, $f_3:=x_1^2x_3+x_1x_2^2+x_1x_3^2+x_2^3$,
$f_4:=x_1^2x_2x_3+x_1^2x_3^2+x_1x_2^2x_3+x_1x_2x_3^2+x_2^2x_3^2+x_3^4$
and one relation:
$$x_1^2f_4-f_2^3-x_1f_2f_3-f_3^2=0.$$
It follows that $A^G[1/x_1]=\mathbb{F}_2[x_1^{\pm 1},f_2,f_3].$
Note that ${\rm tr}(x_1x_2x_3)=\sum_{g\in G}g(x_1x_2x_3)=x_1^3$,
hence the map
${\rm tr}:\ A[1/x_1]\to (A[1/x_1])^G\ {\rm is\ surjective}.$
Consider the ``dehomogenization":
$$D_{x_1}:=(A[1/x_1])_0\cong A/(x_1-1)A.$$
Then $D_{x_1}$ is a polynomial ring of (Krull-) dimension $2$ with faithful
non-linear $G$-action and polynomial ring of invariants
$D_{x_1}^G=\fld[f_2/x_1^2,f_3/x_1^3]$.
\end{ex}

Generalizing this example we consider for $G\le {\rm GL}(V)$,
$A:={\rm Sym}(V^*)$ and $0\ne x\in (V^*)^G$ the $\mathbb{Z}$-graded
algebra $A[1/x]$ and define the ``dehomogenization":
\[D_x:=(A[1/x])_0\cong A/(x-1)A.\]
It is known that a graded algebra and its dehomogenizations share many interesting properties (see e.g.
\cite{BH} pg. 38 and the exercises 1.5.26, 2.2.34, 2.2.35 loc. cit.)
Clearly the algebra $D_x$ is a polynomial ring of Krull-dimension $|G|-1$.

Then we have the following
\begin{lemma}\label{D_x_ts}
Assume that $x^N\in{\rm tr}_G(A)$ for some $N\in\mathbb{N}$.
Then $D_x^G$ is a Cohen-Macaulay ring.
\end{lemma}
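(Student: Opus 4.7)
The plan is to reduce the claim to an application of Corollary \ref{ts_cm} by showing that $D_x\in\ts_G$. Since $D_x$ is a polynomial ring, it is (regular, hence) Cohen-Macaulay, so once trace-surjectivity of the $G$-action on $D_x$ is established, the conclusion follows immediately from Corollary \ref{ts_cm}.

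The key observation is that $x\in(V^*)^G$ is $G$-invariant, so the $G$-action on $A$ extends to the $\mathbb{Z}$-graded algebra $A[1/x]$ by graded $\fld$-algebra automorphisms. In particular, the transfer map ${\rm tr}_G:A[1/x]\to A[1/x]^G$ is a homogeneous map of graded $A^G$-modules. The hypothesis gives $a\in A$ with ${\rm tr}_G(a)=x^N$, and the $A^G$-linearity of the trace together with $x\in A^G$ yields
\[
{\rm tr}_G(a/x^N)\;=\;{\rm tr}_G(a)/x^N\;=\;1.
\]
Since ${\rm tr}_G$ is an $A[1/x]^G$-module map containing $1$ in its image, it is surjective onto $A[1/x]^G$. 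In other words, the localization $A[1/x]$ lies in $\ts_G$.

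Next I would restrict to the degree-zero component. Because ${\rm tr}_G$ respects the grading, for any $b\in D_x^G=(A[1/x]^G)_0$ we may take a preimage $c\in A[1/x]$, write $c=\sum_i c_i$ with $c_i$ homogeneous, and conclude ${\rm tr}_G(c_0)=b$ from degree considerations. Hence ${\rm tr}_G(D_x)=D_x^G$, so $D_x\in\ts_G$.

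Finally, $D_x\cong A/(x-1)A$ is a polynomial ring and thus Cohen-Macaulay, so Corollary \ref{ts_cm} applies and yields that $D_x^G$ is Cohen-Macaulay. There is no real obstacle here; the only point requiring care is to verify that the grading and $G$-action on $A[1/x]$ are compatible so that the trace is homogeneous, which is immediate from $x\in(V^*)^G$.
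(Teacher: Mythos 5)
Your proof is correct and follows essentially the same route as the paper: both arguments produce the point $f/x^N\in D_x$ of trace one (your degree-zero extraction of $c_0$ is exactly the paper's ``without loss of generality $f$ is homogeneous of degree $N$'' step) and then invoke Corollary \ref{ts_cm} using that $D_x$ is a polynomial ring, hence Cohen--Macaulay.
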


\begin{proof}
Let $x^N={\rm tr}_G(f)$. Without loss of generality we can assume that
$f$ is homogeneous of degree $N$, hence
$a:=\frac{f}{x^N}\in D_x$ with ${\rm tr}_G(a)=1$, so $D_x\in\ts$.
It follows from Corollary \ref{ts_cm} that $D_x^G$ is Cohen-Macaulay.
\end{proof}

Despite the fact that finite $p$-groups which are not generated by
elements of order $p>2$ cannot act \emph{linearly} on polynomial rings with
Cohen-Macaulay invariants, we have the following first little observation:

\begin{cor}\label{exists_CM_inv}
Let $G$ be an arbitrary finite group and $\fld$ an arbitrary field.
Then there is always a faithful (maybe mildly non-linear) action of $G$
on a polynomial ring with Cohen-Macaualay ring of invariants.
\end{cor}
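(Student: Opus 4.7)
The plan is to apply Lemma \ref{D_x_ts} to the regular representation of $G$. Concretely, I take $V := \fld G$, which is a faithful linear representation over any field $\fld$, and set $A := {\rm Sym}(V^*) \cong \fld[X_g : g \in G]$, where $\{X_g\}_{g \in G}$ is the basis of $V^*$ dual to $G \subseteq V$. A direct computation shows that the contragredient left $G$-action satisfies $h \cdot X_g = X_{hg}$, and consequently
\[
x \;:=\; \sum_{g \in G} X_g \;=\; \tr_G(X_e) \;\in\; (V^*)^G
\]
is a nonzero $G$-invariant that already lies in $\tr_G(A)$; thus the hypothesis of Lemma \ref{D_x_ts} holds with $N = 1$.

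Lemma \ref{D_x_ts} then immediately yields that $D_x^G$ is Cohen-Macaulay. A simple triangular change of coordinates on $A$---replacing $X_e$ by $x - \sum_{g \ne e} X_g$---identifies $A$ with $\fld[x, X_g : g \ne e]$, and hence $D_x = A/(x-1)A$ with the polynomial ring $\fld[X_g : g \ne e]$ in $|G|-1$ variables, on which $G$ acts by $\fld$-algebra automorphisms (generally non-linearly, as is to be expected).

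The one step I regard as the main obstacle---modest, but needing explicit argument---is verifying that this induced $G$-action on $D_x$ remains faithful. My plan is to exploit the $\mathbb{Z}$-grading on $A[1/x]$ (with $x$ in degree one): since $x$ is $G$-invariant, this grading is $G$-stable and gives a $G$-equivariant decomposition
\[
A[1/x] \;=\; \bigoplus_{n \in \mathbb{Z}} x^n D_x,
\]
in which $G$ acts on each summand $x^n D_x$ through its action on $D_x$. Any $g \in G$ that acts trivially on $D_x$ therefore acts trivially on all of $A[1/x]$, and in particular on the subring $A$; faithfulness of the regular representation then forces $g = 1$, completing the argument.
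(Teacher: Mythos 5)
Your proposal is correct and follows essentially the same route as the paper: take the regular representation, observe that $x=\sum_{g}X_g=\tr_G(X_e)$ lies in $\tr_G(A)$, and apply Lemma \ref{D_x_ts} to conclude that $D_x\in\ts_G$ with $D_x^G$ Cohen--Macaulay. The paper simply asserts faithfulness ("in particular faithful"), while you supply a clean justification via the $G$-stable $\mathbb{Z}$-grading $A[1/x]=\bigoplus_n x^nD_x$; this is a welcome elaboration, not a different method.
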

\begin{proof}
Let $V=\oplus_{g\in G}\fld X_g\cong \fld G$ be the regular module and
$x:=\sum_{g\in G} X_g=\tr_G(X_1)$. Then $D_x\in\ts_G$, in particular faithful,
with $D_x^G$ a Cohen-Macaulay ring.
\end{proof}

Of course the polynomial ring $D_x\in\ts_G$ of Corollary \ref{exists_CM_inv}
has Krull-dimension $|G|-1$, whereas example  \ref{C_4_ex} shows that
one can do better. This raises the question for the \emph{minimal} Krull-dimensions
of polynomial rings with faithful group action and Cohen-Macaualay or polynomial
rings of invariants. The latter question has been raised for $p$-groups in
\cite{nonlin}, \cite{locmod} and \cite{catgal}. In \cite{nonlin} an answer was
given for the case of the prime field $\fld=\mathbb{F}_p$. In this paper we will
generalize the methods and some results of these papers, to deal with arbitrary
finite groups of order divisible by $p$.

With regard to polynomial rings of invariants the situation is less clear. We do not know
whether for an arbitrary finite group there is always a faithful action on a polynomial
ring, such that the ring of invariants is again a polynomial ring.
Combining some results of \cite{nonlin} on trace-surjective algebras for $p$-groups with the above mentioned theorem of Serre
we obtain the following ``polynomial analogue" of \ref{exists_CM_inv} at least for abelian or $p$-elementary\footnote{i.e.
a direct product of a cyclic $p'$-group and a $p$-group} groups:

\begin{thm}\label{abelian_pol_intro}
Let $\fld$ be algebraically closed, $G$ a finite abelian or $p$-elementary group and $r$ the smallest prime divisor
of $|G|$. Then there exists a polynomial ring $\F$ of Krull-dimension $\le \log_r(|G|)$, such that $G$ acts faithfully
on $\F$ with $\F^G\cong \F$ as algebras.
\end{thm}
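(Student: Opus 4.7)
The plan is to decompose $G$ as an internal direct product $G = H \times P$, with $P$ the Sylow $p$-subgroup and $H$ an abelian Hall $p'$-complement. In the abelian case this is the primary decomposition; in the $p$-elementary case $H$ is cyclic by definition. I would then build $\F$ as a tensor product $\F_H \otimes_\fld \F_P$ of polynomial rings carrying independent faithful actions of $H$ and $P$, each with polynomial invariant ring. Further decompose $H = C_{n_1} \times \cdots \times C_{n_m}$ into cyclic factors of prime-power order; each $n_i$ is coprime to $p$ and is a power of some prime divisor of $|G|$, so $n_i \ge r$.

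For the $p'$-part $H$, algebraic closure of $\fld$ supplies primitive $n_i$-th roots of unity $\zeta_i \in \fld$. Set $\F_H := \fld[y_1, \ldots, y_m]$ and let the $i$-th cyclic generator of $H$ act by $y_i \mapsto \zeta_i y_i$, fixing the other $y_j$. The generators act as pseudo-reflections, the action is manifestly faithful, and a direct monomial inspection (or Chevalley-Shephard-Todd) identifies
\[\F_H^H = \fld[y_1^{n_1}, \ldots, y_m^{n_m}],\]
a polynomial ring in $m$ variables.

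For the $p$-part $P$ I would invoke the construction from \cite{nonlin}: there exists a polynomial ring $\mathbb{F}_p[z_1, \ldots, z_s]$ with $s \le \log_p(|P|)$ carrying a faithful $P$-action whose invariant ring is again polynomial, and extending scalars from $\mathbb{F}_p$ to $\fld$ (a faithfully flat operation) preserves all these properties. Setting $\F := \F_H \otimes_\fld \F_P \cong \fld[y_1, \ldots, y_m, z_1, \ldots, z_s]$ and letting $G = H \times P$ act factor-wise gives a faithful $G$-action; since the two factor actions commute on disjoint sets of variables, $\F^G = \F_H^H \otimes_\fld \F_P^P$ is a polynomial ring in $m + s$ variables, so $\F^G \cong \F$ as $\fld$-algebras.

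Finally, the dimension count: since each $n_i \ge r$, $m \le \sum_i \log_r(n_i) = \log_r(|H|)$, while $r \le p$ yields $s \le \log_p(|P|) \le \log_r(|P|)$, and adding gives $\dim \F = m + s \le \log_r(|G|)$. The real obstacle is the input from \cite{nonlin}, namely producing, for every finite $p$-group $P$, a faithful polynomial-ring action with polynomial invariants in Krull-dimension at most $\log_p(|P|)$; Example \ref{C_4_ex}, with $P = C_4$, $p = 2$, and $\dim = 2 = \log_2 4$, is the smallest nontrivial instance of that construction.
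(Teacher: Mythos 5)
Your proposal is correct and follows essentially the same route as the paper: decompose $G=H\times P$ with $H$ an abelian $p'$-group acting diagonally by roots of unity as a pseudo-reflection group on a small polynomial ring, tensor with the faithful $P$-algebra $U_P\cong\fld^{[\log_p|P|]}$ from \cite{nonlin} (Theorem \ref{arb_p_grp_sec0}), and add the logarithmic dimension bounds. The only (harmless) deviations are that the paper uses the elementary-divisor decomposition of $H$ rather than the primary one, which can yield fewer variables while still satisfying the same bound, and that it invokes the $p$-group input directly over $\fld$ rather than via base change from $\mathbb{F}_p$.
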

\begin{proof}
This is an immediate consequence of Proposition \ref{abelian_pol_inv}.
\end{proof}

Results like this convince us that the study of the category $\ts_G$ is worthwhile, not just for $p$-groups,
but for arbitrary finite groups. For example, it turns out that the category $\ts_G$ has an interesting
\emph{geometric significance}, its objects are characterized by the following ``$p$-local Galois property":

\begin{thm}\label{local_galois_geom_intro}
Let $\fld$ be an algebraically closed field of characteristic $p>0$, $X$ an affine $\fld$-variety
with ring of regular functions $A=\fld[X]$ and $G$ a finite group acting on $X$. Then the following are equivalent:
\begin{enumerate}
\item $A\in\ts_G$.
\item For every $x\in X$ the point-stabilizer $G_x$ has order coprime to $p$.
\item For one (and then every) Sylow $p$-group $P\le G$, the ring extension $A^P\le A$ is a Galois-extension in the sense of
Auslander and Goldmann \cite{AG} or Chase-Harrison-Rosenberg in \cite{chr}.
\end{enumerate}
\end{thm}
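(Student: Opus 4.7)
The plan is to prove (1)$\iff$(2) by a direct pointwise analysis of the trace map, and (2)$\iff$(3) by combining the geometric form of the Chase--Harrison--Rosenberg / Auslander--Goldmann theorem with elementary Sylow theory; the principal input will be that for $\fld$ algebraically closed, a CHR Galois action coincides with a free action on the closed points of $X$.

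For (1)$\iff$(2), I would evaluate $\tr_G(a)$ at a closed point $x\in X$. Decomposing $G$ into right cosets of $G_x$ and using that $G_x$ fixes $x$ gives
\[
\tr_G(a)(x)\;=\;\sum_{g\in G}a(xg)\;=\;|G_x|\cdot\sum_{t\in G_x\backslash G}a(xt)
\]
for every $a\in A$. If $p\mid|G_x|$, the right-hand side vanishes in $\fld$, so $\tr_G(A)\subseteq\mathfrak{m}_x\cap A^G\neq A^G$ and (1) fails. Conversely, assume every $G_x$ is a $p'$-group. Since $\tr_G(A)$ is an ideal of $A^G$, it suffices to show it is contained in no maximal ideal $\mathfrak{n}$ of $A^G$; by lying-over for the integral extension $A^G\subseteq A$, such an $\mathfrak{n}$ is $\mathfrak{m}_x\cap A^G$ for some closed point $x$. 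The orbit $xG$ is a finite set of distinct closed points, so the Chinese Remainder Theorem yields $a\in A$ with $a(x)=1$ and $a(y)=0$ for all $y\in xG\setminus\{x\}$. For this $a$ one computes $\tr_G(a)(x)=|G_x|\in\fld^{\times}$, so $\tr_G(a)\notin\mathfrak{m}_x$ and hence $\tr_G(a)\notin\mathfrak{n}$, whence $\tr_G(A)=A^G$, i.e.\ $A\in\ts_G$.

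For (2)$\iff$(3), invoke the CHR/AG criterion in its geometric form: for a finite $H\le\Aut_\fld(A)$ with $\fld$ algebraically closed and $A$ finitely generated over $\fld$, the extension $A^H\subseteq A$ is $H$-Galois if and only if $H_x=1$ for every closed point $x\in X$ (algebraic closedness collapses inertia into decomposition at each maximal ideal, so the usual CHR condition becomes freeness of the action). Applied to $H=P$ a Sylow $p$-subgroup, (3) becomes the statement that $P$ acts freely on $X$. It remains to check the purely group-theoretic equivalence: $G_x$ is a $p'$-group for every $x$ iff $P_x=1$ for every $x$. The forward direction is immediate since $P_x\le P\cap G_x$ is simultaneously a $p$-group and a $p'$-group. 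For the converse, given any $p$-subgroup $Q\le G_x$, pick $g\in G$ with $g^{-1}Qg\le P$ by Sylow conjugacy; then $g^{-1}Qg\le g^{-1}G_xg=G_{xg}$, so $g^{-1}Qg\le P_{xg}=1$, forcing $Q=1$. Hence $G_x$ has no nontrivial $p$-subgroup and is a $p'$-group. Conjugacy of Sylow subgroups yields the ``one iff every'' clause.

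The principal obstacle is isolating the correct geometric form of the CHR/AG theorem so that ``Galois'' is literally equivalent to ``free action''; once this reduction (via residue-field$={}\fld$) is in hand, the trace computation and the Sylow step are elementary.
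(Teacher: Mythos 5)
Your proof is correct, but it is organized quite differently from the paper's. The paper routes everything through the purely algebraic Proposition \ref{local_galois}: first $A\in\ts_G$ is reduced to $A_{|P}\in\ts_P$ for a Sylow $p$-subgroup $P$ via the relative transfer (Lemma \ref{ts_restrict}), then trace-surjectivity for the $p$-group $P$ is identified with the Galois property via \cite{nonlin} and translated by the Chase--Harrison--Rosenberg criterion into a condition on elements of order $p$, and finally everything is packaged in terms of inertia groups $I_G(Q)$; only at the very last step (Corollary \ref{local_galois_geom}) does algebraic closedness enter, to identify inertia groups at maximal ideals with point stabilizers. You instead prove (1)$\iff$(2) directly by evaluating $\tr_G(a)$ at a closed point, using the coset decomposition $\tr_G(a)(x)=|G_x|\sum_t a(xt)$, lying-over, and the Chinese Remainder Theorem; and you never use the equivalence ``trace-surjective $=$ Galois for $p$-groups'' at all, instead passing from (2) to (3) through the geometric form of the CHR theorem (Galois $=$ free on closed points, the paper's Theorem \ref{aff act_and_gal thm1}) plus an elementary Sylow argument. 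Your route is more self-contained and concretely geometric, but it is tied to coordinate rings over an algebraically closed field; the paper's detour through Proposition \ref{local_galois} buys the stronger statement that the equivalence (1)$\iff$(2)$\iff$(4) of that proposition holds for arbitrary $A\in\Galg$ over any field of characteristic $p$, with inertia groups replacing stabilizers. Both arguments are sound; the only point worth making explicit in yours is that a maximal ideal of $A^G$ pulls back from a \emph{maximal} ideal of $A$ (integrality) and that the final step from ``no nontrivial $p$-subgroup'' to ``$p'$-group'' uses Cauchy's theorem.
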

\begin{proof}
See Proposition \ref{local_galois} and Corollary \ref{local_galois_geom}.
\end{proof}

The rest of the paper is organized as follows:

In Section 1 we explain the geometric background of our results in the context of free and ``$p$-locally
free actions" of finite groups on affine varieties. Here ``$p$-locally free" means that the action restricted
to every $p$-subgroup is free. We also collect some definitions and results from \cite{nonlin}
and \cite{catgal}, which are used to prove Theorems \ref{abelian_pol_intro} and \ref{local_galois_geom_intro}, but will
also be used in later sections.\\
In Section 2 we develop the basic properties of trace-surjective algebras and also investigate categorical properties
of $\ts_G$. Although this is not an abelian category it has
``s-projective objects", which are analogues of projective modules (see Definition \ref{s-projective}), and it has (s-projective) categorical generators, which we will describe explicitly. This
generalizes definitions and results of \cite{catgal} from $p$-groups to arbitrary finite groups. In particular the special role of ``points" (i.e. ring elements with trace one) is analyzed (see Corollary \ref{ts_gen_by_pts}). As in the $p$-group case, it turns out that the dehomogenization $D_{reg}$ is a ``free generator" in the category $\ts_G$.\\
In Section 3 we discuss induction and restriction functors and the analogue of ``Frobenius reciprocity" for group actions on
commutative $k$-algebras.
Let $H\le G$ be a subgroup, then there is an obvious restriction functor $\res:\ \Galg\to\Halg$, which turns out
to have left- and right adjoints. In contrast to module theory, these adjoint functors do not coincide: in fact
the left adjoint of $\res$ is given by ``tensor induction" $\tensind:\ \Halg\to \Galg$" and the right-adjoint
is given by ordinary ``Frobenius induction" $\ind:\ \Halg\to \Galg$ (see Theorem \ref{res_adjoints}). \\
In Section 4 we apply Frobenius reciprocity in the category $\ts_G$ to investigate properties of objects that can be
detected and analyzed via restriction to Sylow $p$-groups. We prove the following analogues to well known results in module theory: An algebra $A\in\ts_G$ is s- projective if and only if its restriction ${\rm res}(A_{|P})$ is so in $\ts_P$ for a
Sylow $p$-group (see Corollary \ref{split_proj_iff}). If $B\in\ts_G$ is s- projective, then so is
$B\otimes {\rm Sym}(V)$ for any finite-dimensional $\fld G$-module $V$ (Theorem \ref{SV_sproj}). \\
In Section 5 we interpret some of the results of previous sections
as a particular version of Maschke's theorem for group actions on commutative algebras.
This can be used to describe a decomposition of tensor products of the
form $A\otimes {\rm Sym}(V)$ with $A\in\ts_G$ and $\fld G$-module $V$.
A general structure theorem on algebras $A\in\ts_G$ which was
proven in \cite{nonlin} Proposition 4.2 for $p$-groups is generalized to $p$-solvable groups (Proposition \ref{A_tens_Soc_2}).
As an application to general group actions on commutative $k$-algebras we show that
if $A\in\Galg$ is a normal domain, then $A^{{\rm O}_p(G)}$ is the
integral closure of $A_{\rm soc}$ in its quotient field. Here
$A_{\rm soc}=k[{\rm Soc}(A)]$ is the subalgebra of $A$ generated by the
simple $\fld G$-submodules contained in $A$ (Proposition \ref{A_soc_prp}).\\
The Appendix at the end of the paper contains some material on adjoint functors in a form most useful for
section 3. It has been included for the convenience of the reader and to make our exposition self-contained.

{\bf Notation}:
For a category $\frak{C}$ and objects $a,b\in\frak{C}$ we denote by
$\frak{C}(a,b)$ the set of morphisms from $a$ to $b$. The word ``ring" will always
mean ``unital ring" and the notion of a ``subring" $S\le R$ or a ``ring homomorphism" $\phi: S\to R$" will always mean ``unital subring" with $1_S=1_R$ and ``unital homomorphism" satisfying $\phi(1_S)=1_R$.
Let $G$ be a group with group ring $\fld{G}$; with $Mod-\fld{G}$ ($mod-\fld{G}$) we will denote the
category of (finitely generated) right $\fld{G}$-modules and with $\fld{G}-Mod$
($\fld{G}-mod$) we denote the corresponding categories of left modules. If $M$ is a
$\fld {G}$-bimodule, the restriction to the left or right module structure
will be indicated by $_{\fld {G}}M$ or $M_{\fld {G}}$, respectively. We will also use
standard notation from group theory, e.g. for a finite group ${G}$ and a prime $p$ we set
${\rm Syl}_p({G})$ to be the set of all Sylow $p$-groups. A ``$p'$-group" is a finite group of order
coprime to $p$, ${\rm O}_p({G}):=\cap_{P\in{\rm Syl}_p({G})} P\unlhd {G}$ the ``$p$-core" of ${G}$ and
${\rm O}_{p'}({G})\unlhd {G}$ to be the maximal normal subgroup of order coprime to $p$.
By ${\rm O}_{p,p'}({G})$ (or ${\rm O}_{p',p}({G})$, respectively) we denote the canonical
preimage of ${\rm O}_{p'}({G}/{\rm O}_p({G}))$ (or ${\rm O}_{p}({G}/{\rm O}_{p'}({G}))$).
If $\Omega$ is a set on which the group $G$ acts,
we find it useful to switch freely between ``left" and ``right"-actions, using the rule
$$\omega\cdot g:=\omega^g=\ ^{g^{-1}}\omega= g^{-1}\cdot \omega,\ \forall g\in G, \omega\in \Omega,$$
which changes a given right-$G$-action into a left one and vice versa. The set of $G$-fixed points
on $\Omega$ will be denoted by $\Omega^G$.

\section{Galois extensions and $p$-locally free group actions}

We start with some definitions and notation that will also be used later in the paper:
\begin{df}\label{stably_def} Let $R$ be a $\fld$-algebra and $n\in\mathbb{N}$.
\begin{enumerate}
\item With $R^{[n]}$ we denote the polynomial ring $R[t_1,\cdots,t_n]$ over $R$.
\item Let $\mathbb{P}=\fld[t_1,\cdots,t_m]\cong\fld^{[m]}$ and $G\le \Aut_\fld(\mathbb{P})$.
Then $\mathbb{P}$ is called {\bf uni-triangular} (with respect to the chosen generators $t_1,\cdots,t_m$),
if for every $g\in G$ and $i=1,\cdots,m$ there is $f_{g,i}(t_1,\cdots,t_{i-1})\in \fld[t_1,\cdots,t_{i-1}]$ such that $(t_i)g=t_i+f_{g,i}(t_1,\cdots,t_{i-1}).$
\item Let $m\in\mathbb{N}$, then a $\fld$-algebra $R$ is called {\bf ($m$-) stably polynomial} if
$T:=R\otimes_\fld \fld^{[m]}\cong R^{[m]}\cong \fld^{[N]}$ for some $N\in\mathbb{N}$. Assume
moreover that $R$ is a $\fld-G$ algebra and $T$ extends the $G$-action on $R$ trivially, i.e.
$T\cong R\otimes_\fld F$ with $F=F^G\cong\fld^{[m]}$. If $T$ is uni-triangular, then we call
$R$ {\bf ($m$-) stably uni-triangular}.
\item Let $V_{reg}$ be the regular representation of $G$ with dual space
$V^*_{reg}:=\oplus_{g\in G} \fld X_g\cong \fld G$, $X_g:=(X_e)g$ and
$x:=\sum_{g\in G} X_g={\rm tr}_G(X_e)\in V^G$. Then we set
$D_{reg}:=D_{reg}(G):=D_x$, the dehomogenization of ${\rm Sym}(V^*_{reg})$.
Note that $D_{reg}(G)$ is a polynomial ring in $|G|-1$ variables.
\end{enumerate}
\end{df}

The next result uses the following Theorem, which was one of the main results of
\cite{nonlin}:
\begin{thm}[\cite{nonlin} Theorems 1.1-1.3] \label{arb_p_grp_sec0}
Let $P$ be a group of order $p^n$. There exists a trace-surjective uni-triangular
$P$-subalgebra $U:=U_P\le D_{reg}$, such that $U\cong \fld^{[n]}$ is a retract of
$D_{reg}$, i.e. $D_{reg}=U\oplus I$ with a $P$-stable \emph{ideal} $I\unlhd D_{reg}$.
Moreover: $U^P\cong \fld^{[n]}$ and $D_{reg}^P\cong \fld^{[|P|-1]}$.
\end{thm}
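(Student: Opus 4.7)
The plan is to proceed by induction on $n$, using a chief series $1=P_0\lhd P_1\lhd\cdots\lhd P_n=P$ with each factor of order $p$. For the base case $n=1$, $P\cong C_p$ acts on $D_{reg}$, a polynomial ring of dimension $p-1$; writing $P=\langle g\rangle$ and using coordinates $X_g,\ldots,X_{g^{p-1}}$ (with the relation $X_e=1-\sum_{i\ge 1}X_{g^i}$), the element $u_1:=\sum_{i=1}^{p-1}iX_{g^i}$ satisfies $u_1\cdot g - u_1 = -1\in\fld$, so $U_P:=\fld[u_1]$ is uni-triangular. A direct computation of power sums shows that $\tr_P(u_1^{p-1})\ne 0$, yielding trace-surjectivity.

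For the inductive step, pick $P_{n-1}\lhd P$ of index $p$ and apply the induction hypothesis to $P_{n-1}$ acting on $D_{reg}(P_{n-1})$, producing a uni-triangular retract $U_{P_{n-1}}\subseteq D_{reg}(P_{n-1})$. Transfer this into $D_{reg}(P)$ via a natural $P_{n-1}$-equivariant embedding induced by the decomposition, as a $P_{n-1}$-module, of $V_{reg}(P)$ into copies of $V_{reg}(P_{n-1})$; this yields generators $u_1,\ldots,u_{n-1}\in D_{reg}(P)^{P_{n-1}}$. The missing top generator $u_n$ is then produced by applying the $n=1$ argument to the induced $P/P_{n-1}\cong C_p$-action on $D_{reg}(P)^{P_{n-1}}$ modulo the ideal generated by $u_1,\ldots,u_{n-1}$; trace-surjectivity of this quotient is inherited from $D_{reg}(P)\in\ts_P$ via the geometric characterization from Theorem \ref{local_galois_geom_intro}.

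The $P$-stable ideal $I$ is constructed as the kernel of a $P$-equivariant retraction $\pi:D_{reg}\to U$ built inductively along the chief series, propagating $P$-equivariance through the degree filtration supplied by uni-triangularity. The identity $U^P\cong\fld^{[n]}$ follows from the classical result that uni-triangular $p$-group actions on polynomial rings have polynomial invariants in the same number of variables. The more delicate statement $D_{reg}^P\cong\fld^{[|P|-1]}$ is obtained by combining the retract decomposition $D_{reg}^P=U^P\oplus I^P$ with a description of $D_{reg}$ as a free $U$-module of rank $|P|$ on which $P$ permutes a natural basis (a consequence of the Galois property from Theorem \ref{local_galois_geom_intro}); this forces $I^P$ to contribute the remaining $|P|-1-n$ algebraically independent generators.

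The main obstacle is ensuring that the complement $I$ is genuinely an \emph{ideal} and not merely a $P$-invariant $\fld$-subspace complement. A generic $P$-equivariant vector-space complement to $U$ will not be multiplicatively closed; one must arrange the retraction $\pi$ to arise from an algebraically structured decomposition (for instance, a $P$-equivariant system of standard monomials) and propagate this structure inductively along the chief series so that each level extends the previous one while preserving both multiplicativity and uni-triangularity.
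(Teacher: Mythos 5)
First, a point of orientation: the paper does not prove this statement at all --- it is quoted verbatim from \cite{nonlin} (Theorems 1.1--1.3) and used as imported input --- so there is no internal argument to measure yours against. Judged on its own terms, your base case is correct (the element $u_1=\sum_{i=1}^{p-1}iX_{g^i}$ satisfies $(u_1)g-u_1=-\sum_{h\in P}X_h=-1$ in $D_{reg}$, and $\tr_P(u_1^{p-1})$ is a nonzero constant), but the inductive step has a structural flaw. You assert both that $u_1,\dots,u_{n-1}$ arise from a $P_{n-1}$-equivariant transfer of the \emph{faithful} trace-surjective algebra $U_{P_{n-1}}$ and that they lie in $D_{reg}(P)^{P_{n-1}}$; these are incompatible. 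If they really are $P_{n-1}$-invariant, then $P_{n-1}$ acts on $U=\fld[u_1,\dots,u_n]$ only through $u_n\mapsto u_n+f_g(u_1,\dots,u_{n-1})$ with every $f_g$ fixed by $P_{n-1}$, so $g\mapsto f_g$ is a homomorphism into an additive group of exponent $p$ and the action cannot be faithful unless $P_{n-1}$ is elementary abelian --- the construction already fails for $P=C_{p^3}$. If instead they carry a faithful $P_{n-1}$-action, you have not shown that $\fld[u_1,\dots,u_{n-1}]$ is stable under all of $P$, nor that elements of $P\setminus P_{n-1}$ act uni-triangularly on it; this is exactly the hard point. It is why the proof in \cite{nonlin} inducts on \emph{quotients} $P/N$ with $N\unlhd P$ of order $p$ (the inflation of $U_{P/N}$ is automatically $P$-stable) and then adjoins a single new variable by a cocycle-type argument, rather than restricting to a maximal subgroup. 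Note also that the ``natural embedding'' $D_{reg}(P_{n-1})\hookrightarrow D_{reg}(P)$ is not induced by $V_{reg}(P)|_{P_{n-1}}\cong V_{reg}(P_{n-1})^{\oplus p}$ in the obvious way, because the dehomogenizing elements $\sum_{h\in P_{n-1}}X_{hr}$ and $\sum_{g\in P}X_g$ do not match.

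Two further assertions are false as stated. There is no ``classical result'' that uni-triangular $p$-group actions on polynomial rings have polynomial invariant rings: linear triangular actions already give non-polynomial (indeed non-Cohen--Macaulay) invariants, so the conclusion $U^P\cong\fld^{[n]}$ genuinely requires trace-surjectivity together with the dimension count $\dim U=\log_p|P|$, and this is a substantive part of \cite{nonlin}. Likewise $D_{reg}$ cannot be a free $U$-module of rank $|P|$, since $\dim D_{reg}=|P|-1>n=\dim U$ whenever $|P|>2$; what is true is that $D_{reg}$ is free of rank $|P|$ over $D_{reg}^P$ (\cite{nonlin}, Proposition 4.2), but that alone does not force $D_{reg}^P$ to be polynomial --- one needs the decomposition $D_{reg}\cong U\otimes_\fld W$ with $W=W^P$ a polynomial ring, which you do not construct. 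Finally, you correctly identify the construction of the $P$-stable \emph{ideal} complement $I$ as the main obstacle, but you do not overcome it, even in the base case; since the retract property is precisely what the rest of the paper uses, the proposal does not establish the theorem.
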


Let $H\le {\rm GL}(V)$ be a finite subgroup with polynomial ring of invariants
$A^H={\rm Sym}(V^*)^H$, (so $H$ must be generated by pseudo-reflections) and let $P$ be
an arbitrary finite $p$-group. Then the direct product $H\times P$ acts faithfully on
the polynomial ring $\F:=A\otimes_\fld U_P$ with ring of invariants
$\F^{H\times P}\cong A^H\otimes_\fld U_P^P$, which is again a polynomial ring.
This applies to any $H\le {\rm GL}(V)$ of order coprime to $p$, which is generated by pseudo-reflections.

\begin{prp}\label{abelian_pol_inv}
Let $H$ be an abelian $p'$-group of exponent $e$, $P$ an arbitrary finite
$p$-group, $G=H\times P$ and $r$ the minimal prime divisor of $|G|$. Assume that $\fld$ contains
a primitive $e$'th root of unity, then there exists a polynomial ring $\F$ of Krull-dimension
$d\le \log_r(|H|)+\log_p(|P|)\le \log_r(|G|)$, such that $G:=H\times P$ acts faithfully on $\F$ with $\F^G\cong \fld^{[d]}$.
\end{prp}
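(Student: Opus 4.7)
The plan is to construct $\F$ as an external tensor product $A\otimes_\fld U_P$ of two independent polynomial rings, one carrying the $H$-action and one the $P$-action. For the $p'$-part, since $\fld$ contains a primitive $e$-th root of unity and $H$ is abelian of exponent $e$, the structure theorem for finite abelian groups yields a decomposition $H\cong \langle h_1\rangle\times\cdots\times\langle h_k\rangle$ with $m_i:=|h_i|$ dividing $e$. Choosing primitive $m_i$-th roots $\zeta_i\in\fld$, I take $A:=\fld[y_1,\ldots,y_k]$ with the diagonal linear action $h_i\cdot y_j=\zeta_i^{\delta_{ij}}y_j$. This $H$-action is linear and faithful, and a direct monomial calculation (or the Chevalley-Shephard-Todd theorem applied coordinate-wise) gives $A^H=\fld[y_1^{m_1},\ldots,y_k^{m_k}]\cong \fld^{[k]}$.

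The crucial counting step is to bound $k$ by $\log_r|H|$. Let $r_H$ be the smallest prime dividing $|H|$; each $m_i>1$ is a divisor of $|H|$, so all its prime factors are $\ge r_H$, whence $m_i\ge r_H$. Therefore $|H|=\prod_{i=1}^k m_i\ge r_H^k$, giving $k\le \log_{r_H}|H|$. Since $r=\min(r_H,p)\le r_H$, this yields $k\le \log_r|H|$.

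For the $p$-part I invoke Theorem \ref{arb_p_grp_sec0}: writing $|P|=p^n$, there is a polynomial ring $U_P\cong \fld^{[n]}$ carrying a faithful uni-triangular $P$-action with $U_P^P\cong \fld^{[n]}$. Now form $\F:=A\otimes_\fld U_P$, a polynomial ring in $d:=k+n$ variables, and let $G=H\times P$ act componentwise by $(h,\pi)\cdot(a\otimes u):=h(a)\otimes \pi(u)$. Faithfulness is inherited from the two factor-actions by restricting to the subalgebras $A\otimes 1$ and $1\otimes U_P$. Because the $H$- and $P$-actions commute and each fixes the opposite tensor-factor pointwise, taking invariants factors as
\[\F^G=(A\otimes_\fld U_P)^{H\times P}=A^H\otimes_\fld U_P^P\cong\fld^{[d]},\]
which is polynomial of Krull-dimension $d$.

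Finally, $d=k+n\le \log_r|H|+\log_p|P|$, and since $p\ge r$ gives $\log_p|P|\le \log_r|P|$, the right-hand side is at most $\log_r|H|+\log_r|P|=\log_r|G|$. The only nontrivial input is Theorem \ref{arb_p_grp_sec0}; the rest is a routine assembly, and the main conceptual content is the counting inequality $k\le \log_{r_H}|H|$ derived from the cyclic decomposition of $H$.
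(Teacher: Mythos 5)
Your proposal is correct and follows essentially the same route as the paper: decompose the abelian $p'$-group $H$ into cyclic factors acting diagonally by roots of unity on a polynomial ring of dimension $\le\log_r|H|$, tensor with the algebra $U_P$ from Theorem \ref{arb_p_grp_sec0}, and observe that invariants factor across the tensor product. The only cosmetic difference is that you compute $A^H=\fld[y_1^{m_1},\ldots,y_k^{m_k}]$ by a direct monomial calculation, whereas the paper phrases the cyclic generators as pseudo-reflections and appeals to the Chevalley--Shephard--Todd theorem; both yield the same conclusion.
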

\begin{proof} Let $H\cong\prod_{i=1}^s C_{d_i}$ with elementary divisors $1<d_1\ |\ d_2\ |\ \cdots\ |\ d_s$
and let $\eta\in\fld$ be a primitive $d_s$'th root of unity. Then
every factor $C_{d_i}$ acts on the one dimensional space $\fld$ with
generating pseudo-reflection of eigenvalue $\eta^{d_s/d_i}$.
It follows that $H$ acts on $V:=\fld^s$ as a linear group generated by pseudo-reflections.
Since $r^s\le \prod_{i=1}^s d_i=|H|$ and the polynomial ring $U_P$ has
Krull-dimension $\log_p(|P|)$, we can choose $\F$ to be
${\rm Sym}(V^*)\otimes_\fld U_P$.
\end{proof}

We are now going to explain the geometric significance of the category $\ts_G$ of trace surjective $\fld-G$ algebras:

Set $B:=A^G$ and define $\Delta:=G\star A=A\star G:=\oplus_{g\in G}d_gA$ to be the crossed product of
$G$ and $A$ with $d_gd_h=d_{gh}$ and $d_ga=g(a)\cdot d_g=(a)g^{-1}\cdot d_g$ for
$g\in G$ and $a\in A$. Let $_BA$ denote $A$ as left $B$-module, then there is a
homomorphism of rings
$$\rho:\ \Delta\to \End(_BA),\ ad_g\mapsto \rho(ad_g)=(a'\mapsto a\cdot g(a')=a\cdot (a')g^{-1}).$$
One calls $B\le A$ a \emph{Galois-extension} with group $G$ if $_BA$ is finitely
generated projective and $\rho$ is an isomorphism of rings. This definition goes back
to Auslander and Goldmann \cite{AG} (Appendix, pg.396)
and generalizes the classical notion of Galois field extensions. It also applies
to non-commutative $k-G$ algebras, but if $A$ is commutative, this definition of
`Galois-extension' coincides with the one given by Chase-Harrison-Rosenberg in \cite{chr},
where the extension of commutative rings $A^G\le A$ is called a Galois-extension
if there are elements $x_1,\cdots,x_n$, $y_1,\cdots,y_n$ in $A$ such that
\begin{equation}\label{chr_def}
\sum_{i=1}^n x_i(y_i)g=\delta_{1,g}:=
\begin{cases}
1& \text{if}\ g=1\\
0&\text{otherwise}.
\end{cases}
\end{equation}

In \cite{chr} the following has been shown:
\begin{thm}(Chase-Harrison-Rosenberg)\cite{chr}\label{chase_harrison_rosenberg}
$A^G\le A$ is a Galois extension if and only if
for every $1\ne \sigma\in G$ and maximal ideal ${\rm p}$ of $A$ there
is $s:=s({\rm p},\sigma)\in A$ with $s-(s)\sigma\not\in{\rm p}$.
\end{thm}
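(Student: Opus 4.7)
The plan is to prove $(\Rightarrow)$ by a direct calculation with the given Galois elements, and then to prove $(\Leftarrow)$ by constructing such elements from the pointwise hypothesis via a separability-type element in $A\otimes_{A^G}A$.

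For $(\Rightarrow)$, I suppose $x_1,\ldots,x_n,y_1,\ldots,y_n \in A$ satisfy (\ref{chr_def}). Fix $\sigma \ne 1$. From $\sum_i x_iy_i = 1$ and $\sum_i x_i(y_i)\sigma = 0$ I subtract to get $\sum_i x_i(y_i - (y_i)\sigma) = 1$. For any maximal ideal ${\rm p} \unlhd A$ at least one summand $x_i(y_i - (y_i)\sigma)$ must lie outside ${\rm p}$, whence $y_i - (y_i)\sigma \notin {\rm p}$; taking $s := y_i$ completes this direction.

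For $(\Leftarrow)$, the hypothesis gives, for each $\sigma \ne 1$, that the ideal $I_\sigma \unlhd A$ generated by $\{s - (s)\sigma : s \in A\}$ lies in no maximal ideal, so $I_\sigma = A$. Hence I pick $r_j, s_j \in A$ (depending on $\sigma$) with $\sum_j r_j(s_j - (s_j)\sigma) = 1$, and form
$$e_\sigma := \sum_j r_j \otimes s_j - \Big(\sum_j r_j\,(s_j)\sigma\Big) \otimes 1 \;\in\; A \otimes_{A^G} A.$$
For each $g \in G$, the map $\mu_g : A \otimes_{A^G} A \to A$, $a \otimes b \mapsto a\cdot (b)g$, is a well-defined ring homomorphism of $A^G$-algebras (well-definedness uses $A^G$-fixedness, multiplicativity uses that $g$ acts by algebra automorphisms). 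A direct check shows $\mu_1(e_\sigma) = 1$ and $\mu_\sigma(e_\sigma) = 0$.

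The decisive step is then the product construction $e := \prod_{\sigma \ne 1} e_\sigma \in A \otimes_{A^G} A$, which I write as $e = \sum_i x_i \otimes y_i$. Since each $\mu_g$ is multiplicative, $\sum_i x_i(y_i)g = \mu_g(e) = \prod_{\sigma \ne 1} \mu_g(e_\sigma)$. For $g = 1$ every factor equals $1$, so $\mu_1(e)=1$; for $g = \tau \ne 1$ the factor $\mu_\tau(e_\tau) = 0$ annihilates the product, so $\mu_\tau(e) = 0$. This yields $\sum_i x_i(y_i)g = \delta_{1,g}$, i.e.\ (\ref{chr_def}). The main obstacle is producing the local element $e_\sigma$ with its twin property $\mu_1(e_\sigma)=1$, $\mu_\sigma(e_\sigma)=0$; after that, the product trick in the commutative ring $A\otimes_{A^G}A$ does all of the heavy lifting.
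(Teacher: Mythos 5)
Your argument is correct, and I should point out at the outset that the paper itself gives no proof of this statement: it is quoted verbatim from Chase--Harrison--Rosenberg \cite{chr}, with (\ref{chr_def}) taken as the definition of a Galois extension in the commutative case. So there is no in-paper proof to compare against; what you have written is essentially the classical argument from \cite{chr}, and it is complete. The forward direction is a clean subtraction of the $g=1$ and $g=\sigma$ instances of (\ref{chr_def}) followed by primality of ${\rm p}$, which is exactly right. For the converse, your local elements $e_\sigma\in A\otimes_{A^G}A$ are well constructed: the hypothesis does give $I_\sigma=A$ (a proper ideal of a commutative unital ring always sits inside a maximal one), the maps $\mu_g$ are genuinely $A^G$-balanced and multiplicative because $g$ fixes $A^G$ and acts by algebra automorphisms, and the verifications $\mu_1(e_\sigma)=1$, $\mu_\sigma(e_\sigma)=0$ check out. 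The product $e=\prod_{\sigma\ne 1}e_\sigma$ then delivers (\ref{chr_def}) precisely because $A\otimes_{A^G}A$ is commutative and each $\mu_g$ is a ring homomorphism. The only presentational caveat is that you are proving the equivalence against the Chase--Harrison--Rosenberg formulation (\ref{chr_def}) rather than the Auslander--Goldmann one ($_BA$ projective and $\rho$ an isomorphism); the paper asserts, also without proof, that these coincide for commutative $A$, so your choice of definition matches the paper's usage and no gap results.
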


Now, if $X$ is an affine variety over the algebraically closed field $\fld$,
with $G\le {\rm Aut}(X)$ and $A:=\fld[X]$ (the ring of regular functions),
then for every maximal ideal ${\rm m}\unlhd A$, $A/{\rm m}\cong \fld$. Hence if $({\rm m})g={\rm m}$, then
$a-(a)g\in {\rm m}$ for all $a\in A$. Therefore we conclude

\begin{thm}\label{aff act_and_gal thm1}
The finite group $G$ acts freely on $X$ if and only if $k[X]^G\le k[X]$ is a Galois-extension.
\end{thm}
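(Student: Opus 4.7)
The plan is to reduce the equivalence to the Chase--Harrison--Rosenberg criterion (Theorem \ref{chase_harrison_rosenberg}) by exploiting Hilbert's Nullstellensatz. Since $X$ is an affine variety over the algebraically closed field $\fld$, the maximal ideals of $A := \fld[X]$ are exactly the vanishing ideals $\mathfrak{m}_x$ of closed points $x \in X$, and the $G$-action on $A$ is dual to the action on $X$ in the sense that for every $s \in A$, $\sigma \in G$ and $x \in X$ one has $((s)\sigma)(x) = s(x\sigma)$ (under the left/right convention fixed in the \textbf{Notation} paragraph of the introduction). In particular, $s - (s)\sigma \in \mathfrak{m}_x$ if and only if $s(x) = s(x\sigma)$.

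For the \emph{only if} direction, I would fix a nontrivial $\sigma \in G$ together with a maximal ideal $\mathfrak{p} = \mathfrak{m}_x$. Since $G$ acts freely, $x\sigma \ne x$, and because the coordinate ring of an affine variety over an algebraically closed field separates its closed points, there exists $s \in A$ with $s(x) \ne s(x\sigma)$. Translating back via the dictionary above, $s - (s)\sigma \notin \mathfrak{m}_x = \mathfrak{p}$, which verifies the hypothesis of Theorem \ref{chase_harrison_rosenberg} for this pair $(\sigma,\mathfrak{p})$. Since $\sigma$ and $\mathfrak{p}$ were arbitrary, $A^G \le A$ is a Galois extension.

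For the \emph{if} direction I would argue by contraposition. Assume the action of $G$ on $X$ is not free, so there is a point $x \in X$ and an element $1 \ne \sigma \in G_x$. Then $s(x\sigma) = s(x)$ for every $s \in A$, whence $s - (s)\sigma \in \mathfrak{m}_x$ for all $s \in A$. Consequently the criterion of Theorem \ref{chase_harrison_rosenberg} fails at $(\sigma,\mathfrak{m}_x)$, so $A^G \le A$ cannot be a Galois extension.

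There is no serious obstacle; the only delicate bookkeeping is keeping the left/right action conventions consistent between the dual action on $A$ and the action on $X$, which must be pinned down at the outset so that the sign in $s - (s)\sigma$ matches the definition of a Galois extension via \eqref{chr_def}. Once this is fixed, both directions are direct one-line consequences of the Nullstellensatz together with Theorem \ref{chase_harrison_rosenberg}.
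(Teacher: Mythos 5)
Your proof is correct and follows essentially the same route as the paper: the paper likewise deduces the theorem from the Chase--Harrison--Rosenberg criterion (Theorem \ref{chase_harrison_rosenberg}) together with the Nullstellensatz observation that $A/\mathfrak{m}\cong\fld$ for every maximal ideal, so that a stabilized point forces $a-(a)\sigma\in\mathfrak{m}$ for all $a$, while distinct points are separated by regular functions. Your write-up merely makes explicit the separation-of-points step in the ``only if'' direction, which the paper leaves implicit.
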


If $B\le A$ is a Galois-extension, then it follows from equation (\ref{chr_def}),
that ${\rm tr}(A)=A^G=B$ (see \cite{chr}, Lemma 1.6), so $A\in \ts_G$.
It also follows from Theorem \ref{chase_harrison_rosenberg},
that for a $p$-group $G$ and $\fld$ of characteristic $p$,
the algebra $A$ is trace-surjective if and only if $A\ge A^G=B$ is a Galois-extension
(see \cite{nonlin} Corollary 4.4.). Due to a result of Serre, the only finite groups acting
freely on $\mathbb{A}^n$ are finite $p$-groups (see \cite{serre_how_to} or \cite{catgal} Theorem 0.1).
Using this we obtain
\begin{cor}\label{ts_gal1}
Let $\fld$ be algebraically closed. Then the finite group $G$ acts freely on $X\cong \mathbb{A}^n$ if and only if
$G$ is a $p$-group with $p={\rm char}(\fld)$ and $\fld[X]\in\ts_G$.
\end{cor}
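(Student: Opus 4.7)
The plan is to assemble the corollary directly from three ingredients already collected just above the statement: Theorem \ref{aff act_and_gal thm1} (freeness of the action is equivalent to $\fld[X]^G\le\fld[X]$ being Galois), the remark that a Galois-extension $B\le A$ always satisfies $\tr(A)=A^G=B$ and hence $A\in\ts_G$, the converse statement for $p$-groups cited as \cite{nonlin} Corollary 4.4 (under the standing hypothesis $\chr(\fld)=p$, a $\fld$-$G$-algebra $A$ is trace-surjective if and only if $A^G\le A$ is Galois), and finally Serre's theorem that the only finite groups admitting a free action on $\mathbb{A}^n$ are $p$-groups.

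For the forward implication I would suppose $G$ acts freely on $X\cong\mathbb{A}^n$. Serre's theorem (\cite{serre_how_to}, \cite{catgal} Theorem 0.1), combined with the standing assumption $\chr(\fld)=p>0$, immediately forces $G$ to be a $p$-group for this prime $p$. Theorem \ref{aff act_and_gal thm1} then says $\fld[X]^G\le\fld[X]$ is a Galois extension, and the observation following equation (\ref{chr_def}) (i.e. \cite{chr} Lemma 1.6) yields $\tr(\fld[X])=\fld[X]^G$, so that $\fld[X]\in\ts_G$.

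For the converse I would assume $G$ is a $p$-group with $p=\chr(\fld)$ and $\fld[X]\in\ts_G$. By \cite{nonlin} Corollary 4.4 (applicable precisely because we are in the $p$-group situation over a field of characteristic $p$) trace-surjectivity is equivalent to $\fld[X]^G\le\fld[X]$ being a Galois extension. Theorem \ref{aff act_and_gal thm1} then translates this back into the statement that $G$ acts freely on $X$.

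There is no real obstacle: everything has been engineered in this section so that the corollary is a bookkeeping exercise. The only point to be careful about is the asymmetry in the two directions of \cite{nonlin} Corollary 4.4 vs.\ the general Chase-Harrison-Rosenberg observation: the implication ``Galois $\Rightarrow$ trace-surjective'' holds for arbitrary finite groups, while ``trace-surjective $\Rightarrow$ Galois'' requires the $p$-group hypothesis, which is why the corollary must combine trace-surjectivity of $\fld[X]$ with the explicit assumption that $G$ be a $p$-group (respectively obtain this from Serre in the other direction).
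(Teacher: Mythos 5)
Your proposal is correct and follows exactly the route the paper intends: Serre's theorem to force $G$ to be a $p$-group, Theorem \ref{aff act_and_gal thm1} to translate freeness into the Galois property, the Chase--Harrison--Rosenberg observation for ``Galois $\Rightarrow$ trace-surjective'' in general, and \cite{nonlin} Corollary 4.4 for the converse in the $p$-group case. Your remark about the asymmetry of the two implications is precisely the point the paper is relying on, so nothing is missing.
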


Since for $p$-groups in characteristic $p$ the trace-surjective algebras coincide with
Galois-extensions over the invariant ring, we obtain from Theorem
\ref{aff act_and_gal thm1}:

\begin{cor}\label{intro_cor_1}
If $\fld$ is an algebraically closed field of characteristic $p>0$, $X$ an affine $\fld$-variety
with $A=\fld[X]$ and $G$ a finite $p$-group, then the following are equivalent:
\begin{enumerate}
\item $G$ acts freely on $X$;
\item $A^G\le A$ is a Galois extension;
\item $A\in\ts_G$.
\end{enumerate}
\end{cor}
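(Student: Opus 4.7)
My plan is to stitch together three implications, all of which have essentially been prepared in the preceding discussion. The equivalence (1) $\Leftrightarrow$ (2) is exactly Theorem \ref{aff act_and_gal thm1} and requires no further argument. For (2) $\Rightarrow$ (3), the Chase--Harrison--Rosenberg witnesses $x_i, y_i$ satisfying equation (\ref{chr_def}) furnish $1 \in \tr(A)$ directly (this is \cite{chr}, Lemma 1.6), whence $\tr(A) = A^G$ and $A \in \ts_G$.

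The only substantive step is therefore (3) $\Rightarrow$ (1), which I would handle by contraposition. Assume that $G$ does not act freely on $X$; then some $1 \ne \sigma \in G$ fixes a closed point $x \in X$, whose maximal ideal $\mathfrak{m} \lhd A$ is stabilised by a nontrivial $p$-subgroup $H := G_{\mathfrak{m}} \le G$. Since $\fld$ is algebraically closed, the Nullstellensatz yields $A/\mathfrak{m} \cong \fld$, so evaluation at $x$ is a well-defined $\fld$-algebra homomorphism $A \to \fld$. Partitioning $G$ into cosets of $H$ and using that each point of the orbit $Gx$ is hit exactly $|H|$ times, for every $a \in A$ one obtains
\[
\tr_G(a)(x) \;=\; \sum_{g \in G} a(xg) \;=\; |H|\sum_{y \in Gx} a(y) \;=\; 0 \in \fld,
\]
since $|H|$ is a positive power of $p = \chr(\fld)$. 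Hence $\tr_G(A) \subseteq \mathfrak{m}$ and $1 \notin \tr_G(A)$, contradicting trace-surjectivity.

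The principal obstacle is minor and essentially notational: one must keep the right/left action conventions consistent so that ``$(a)g$'' evaluates correctly at $xg$. The algebraic closure of $\fld$ is used essentially, in that it forces the residue field at every closed point to be $\fld$ so that the whole stabilizer $H$ acts as the identity on it. Should a shorter route be preferred, (3) $\Rightarrow$ (2) is also available directly from \cite{nonlin}, Corollary 4.4, which asserts the equivalence of trace-surjectivity and the Galois property for $p$-groups in characteristic $p$ without the geometric detour through free actions.
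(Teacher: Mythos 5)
Your proposal is correct, but it closes the cycle of implications differently from the paper. The paper obtains the corollary by combining Theorem \ref{aff act_and_gal thm1} (which gives (1) $\iff$ (2)) with the previously recorded fact that for a $p$-group in characteristic $p$ trace-surjectivity is \emph{equivalent} to the Galois property of $A^G\le A$ (the implication (2) $\Rightarrow$ (3) from equation (\ref{chr_def}) together with the converse from \cite{nonlin}, Corollary 4.4, via Theorem \ref{chase_harrison_rosenberg}); this is exactly the ``shorter route'' you mention in your last sentence. Your main argument instead proves (3) $\Rightarrow$ (1) directly by contraposition: if some point $x$ has nontrivial stabilizer $H$, then $|H|$ is a positive power of $p$ and evaluation at $x$ annihilates every $\tr_G(a)$, so $\tr_G(A)\subseteq\mathfrak{m}_x$ and $1\notin\tr_G(A)$. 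This computation is sound (and, as you note, insensitive to the left/right convention because one sums over all of $G$), and it has the merit of being self-contained — it avoids the appeal to \cite{nonlin}, Corollary 4.4 — while in fact anticipating the more general local statement proved later in Proposition \ref{local_galois} and Corollary \ref{local_galois_geom}, namely that a point stabilizer of order divisible by $p$ obstructs trace-surjectivity. The paper's route is shorter on the page but leans on the cited ring-theoretic equivalence; yours trades that citation for an elementary orbit-counting argument.
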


For an arbitrary finite group $G$ the properties $A\in\ts_G$ and $A^G\le A$ Galois are \emph{not} equivalent. Indeed,
if $1<|G|$ is coprime to $p=\chr(\fld)$, then $A\in\ts_G$, but $A^G\le A$ may not be Galois.
In fact the following holds, regardless whether $p$ divides $|G|$ or not:

\begin{prp}\label{SymV_never_galois_for non_triv_G}
Let $A$ be an $\mathbb{N}_0$-graded, connected, noetherian normal domain
and assume that $G\le {\rm Aut}(A)$ is a finite group of graded automorphisms
(e.g. $A={\rm Sym}(V^*)$ with $G\le {\rm GL}(V)$). Then
$A^G\le A$ is Galois if and only if $G=1$.
\end{prp}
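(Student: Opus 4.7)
The plan is to apply the Chase--Harrison--Rosenberg criterion (Theorem \ref{chase_harrison_rosenberg}). The direction $G = 1 \Rightarrow A^G \le A$ Galois is trivial (take $x_1 = y_1 = 1$ in (\ref{chr_def})), so I only address the converse, by contraposition: assuming $G \ne 1$, I will exhibit a single maximal ideal of $A$ at which the CHR condition fails for every $1 \ne \sigma \in G$.

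The natural candidate is the irrelevant ideal $\mathfrak{m} := \bigoplus_{i > 0} A_i$. Since $A$ is $\mathbb{N}_0$-graded and connected, $A_0 = \fld$ and $A/\mathfrak{m} \cong \fld$, so $\mathfrak{m}$ is maximal. Because each $\sigma \in G$ is a $\fld$-algebra automorphism preserving the grading, $\sigma$ fixes $A_0 = \fld$ pointwise and stabilises each homogeneous component; in particular $\mathfrak{m}$ is $G$-stable.

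The key step is then a one-line graded decomposition: for any $s \in A$, write $s = s_0 + s_+$ with $s_0 \in \fld$ and $s_+ \in \mathfrak{m}$. Since $s_0 \sigma = s_0$ and $s_+ \sigma \in \mathfrak{m}$, one gets $s - s\sigma = s_+ - s_+\sigma \in \mathfrak{m}$ for every $\sigma \in G$, so \emph{no} element $s$ can satisfy $s - s\sigma \notin \mathfrak{m}$. By Theorem \ref{chase_harrison_rosenberg} the extension $A^G \le A$ then fails to be Galois as soon as some $\sigma \ne 1$ exists, establishing the contrapositive.

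I do not foresee any real obstacle: the hypotheses that $A$ be a noetherian normal domain play no role in this particular argument. What is essential is the connected graded $\fld$-linear structure, which supplies a canonical $G$-fixed maximal ideal $\mathfrak{m}$ with $G$ acting trivially on the residue field $A/\mathfrak{m} \cong \fld$, and this is precisely the obstruction the CHR criterion forbids for a Galois extension.
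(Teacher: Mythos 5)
Your argument is correct, and it is genuinely different from (and considerably more elementary than) the paper's own proof. You reduce everything to the Chase--Harrison--Rosenberg maximal-ideal criterion (Theorem \ref{chase_harrison_rosenberg}) applied at the single $G$-stable maximal ideal $\mathfrak{m}=\bigoplus_{i>0}A_i$: since every $\sigma\in G$ is a graded $\fld$-algebra automorphism, it fixes $A_0=\fld$ pointwise and stabilises $\mathfrak{m}$, so $s-(s)\sigma\in\mathfrak{m}$ for all $s$, i.e.\ the inertia group $I_G(\mathfrak{m})$ is all of $G$ and the CHR condition fails for every $\sigma\ne 1$. (Geometrically: a graded action fixes the vertex of the cone, so it is never free unless $G=1$.) The paper instead characterises the Galois property via the Dedekind/homological different (citing \cite{catgal}), uses the graded Nakayama lemma to show $J=0$, and then runs a localisation argument at height-one primes together with reflexivity and pseudo-isomorphisms to conclude $A^G=A$; that route genuinely uses the noetherian normal domain hypotheses, whereas yours does not --- as you correctly note, connectedness, the grading, and $\fld$-linearity of the $G$-action are all you need. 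The one point worth making explicit is that your cancellation $s_0-(s_0)\sigma=0$ does require $\sigma$ to act trivially on $A_0=\fld$; this holds because the automorphisms in this paper are always $\fld$-algebra automorphisms, but it is the hypothesis doing the work and should be flagged. Granting the CHR criterion as stated in the paper, your proof is complete and in fact proves a slightly stronger statement.
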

\begin{proof}
Let $B:=A^G$; it follows from \cite{catgal} Proposition 1.5
that $A^G\le A$ is Galois if and only if $_BA$ is projective and
$A=\D_{A,B}$, the Dedekind different, which in this case coincides with
the homological different $\D_{A,B,hom}:=\mu({\rm ann}_{A\otimes_BA}(J)).$
Here $\mu:\ A\otimes_BA\to A$ is the multiplication map with kernel $J$.
By the assumption, $1_A\in \D_{A,B,hom}$, so
$1_{A\otimes_B A}-x\in {\rm ann}_{A\otimes_BA}(J)$ for some $x\in J$
and for every $j\in J$ we get $j=xj$, hence $J=J^2$. Since $(A\otimes_B A)_0\cong \fld$ and
$J< A\otimes_B A$ is a proper ideal, $J\cap (A\otimes_B A)_0=0$ so $J\le (A\otimes_B A)_+$
and the graded Nakayama lemma yields $J=0$.
Now $_BA$ is a reflexive $B$-module with $A\otimes_B A\cong A$.
Let $i:\ B\hookrightarrow A$ be the canonical embedding
and ${\rm p}\in {\rm spec}_1(B)$, then $B_{\rm p}$ is a discrete valuation ring,
hence$_{B_{\rm p}}A_{\rm p}$ is f.g. free in $B_{\rm p}$-mod of rank
$n$, say. We get $A_{\rm p}\cong B_{\rm p}^n\cong A_{\rm p}\cong$
$B_{\rm p}^n\otimes_{B_{\rm p}} B_{\rm p}^n\cong B_{\rm p}^{n^2}$, so $n=n^2=1$ and $B_{\rm p}\cong A_{\rm p}$. But
$i_{\rm p}(B_{\rm p})\le A_{\rm p}$ are both integrally closed
in $\mathbb{L}:={\rm Quot}(B_{\rm p})={\rm Quot}(A_{\rm p})$, hence
$A_{\rm p}={\rm int.closure}_\mathbb{L}(i_{\rm p}(B_{\rm p}))=
i_{\rm p}(B_{\rm p})$ so
$i_{\rm p}:\ B_{\rm p}\hookrightarrow A_{\rm p}$ is an isomorphism
and $i$ is a pseudo-isomorphism between reflexive $B$-modules. Therefore $i$ is an isomorphism.
Now it follows from standard Galois theory that $G=1$.
\end{proof}

Let $A\in\Galg$ and $Q\in{\rm spec}(A)$ a prime ideal with $q:=Q\cap A^G\in{\rm spec}(A^G)$
and residue class fields $k(q)={\rm Quot}(A^G/q)\le k(Q):={\rm Quot}(A/Q)$.
Then one defines the \emph{inertia group}
$$I_G(Q):=\{g\in G\ |\ a-(a)g\in Q\ \forall\ a\in A\}.$$
It is well known that $I_G(Q)\unlhd G_Q:={\rm Stab}_G(Q)$ with
$G_Q/I_G(Q)={\rm Aut}_{k(q)}(k(Q))$. The following result generalizes Corollary \ref{intro_cor_1}, showing
that $A\in\ts_G$ if and only if $A^G\le A$ is a $p$-local Galois-extension:

\begin{prp}\label{local_galois}
Let $A\in\Galg$ then the following are equivalent:
\begin{enumerate}
\item $A\in\ts_G$;
\item For some (any) Sylow $p$-subgroup $P\le G$, $A_{|P}\in \ts_P$.
\item For every $1\ne g\in G$ of order $p$ and all $\frak{m}\in{\rm max-spec}(A)$ there is
$a\in A$ with $a-(a)g\not\in\frak{m}$.
\item $I_G(Q)$ is a $p'$-group for every $Q\in{\rm spec}(A)$.
\end{enumerate}
\end{prp}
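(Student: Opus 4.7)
My plan is to establish the equivalences in the order (1)$\Leftrightarrow$(2), (2)$\Leftrightarrow$(3), (3)$\Leftrightarrow$(4). Fix a Sylow $p$-subgroup $P\in{\rm Syl}_p(G)$ throughout. The main ingredients are a trace-factorisation through $P$, the Chase--Harrison--Rosenberg Theorem \ref{chase_harrison_rosenberg}, the quoted fact that in characteristic $p$ a $p$-group algebra $A_{|P}\in\ts_P$ iff $A^P\le A$ is Galois (\cite{nonlin} Corollary 4.4), and a Sylow-conjugation lemma linking $I_G(\frak{m})$ with the inertia groups of $P$.

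For (1)$\Leftrightarrow$(2), set $m:=[G:P]$, coprime to $p$ and hence a unit in $\fld$. A right transversal $G=\bigsqcup_{i=1}^m Ps_i$ yields $\tr_G(a)=\sum_i(\tr_P(a))s_i$, and a left transversal $G=\bigsqcup_i t_iP$ yields $\tr_G(a)=\tr_P\bigl(\sum_i(a)t_i\bigr)$. The former with $\tr_P(b)=1$ gives $\tr_G(b)=m$, so $\tr_G(b/m)=1$; the latter with $\tr_G(a)=1$ exhibits $\sum_i(a)t_i$ as a $\tr_P$-preimage of $1$.

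For (2)$\Leftrightarrow$(3), Theorem \ref{chase_harrison_rosenberg} together with \cite{nonlin} Corollary 4.4 rewrites (2) as the inertia-theoretic statement $I_P(\frak{m})=1$ for every $\frak{m}\in{\rm max-spec}(A)$; since $I_P(\frak{m})\le P$ is a $p$-group, Cauchy reduces this to the absence of order-$p$ elements. The implication (3)$\Rightarrow$(2) is then immediate from $I_P(\frak{m})\le I_G(\frak{m})$. For (2)$\Rightarrow$(3), given $g\in G$ of order $p$, Sylow conjugacy writes $g=x\pi x^{-1}$ with $\pi\in P$; substituting $a=(b)x^{-1}$ in $a-(a)g\in\frak{m}$ and using $\pi x^{-1}=x^{-1}g$ shows
$$g\in I_G(\frak{m})\iff \pi\in I_P(\frak{m}'),$$
where $\frak{m}'\in{\rm max-spec}(A)$ is the image of $\frak{m}$ under the $x$-automorphism. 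Hypothesis (2) then forces $\pi=1$, whence $g=1$. This Sylow-conjugation equivalence is the one genuinely technical point of the proof; everything else reduces to routine transversal and Cauchy bookkeeping.

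Finally, (3)$\Leftrightarrow$(4) is standard: from $Q\subseteq\frak{m}$ one reads off $I_G(Q)\subseteq I_G(\frak{m})$, so (3) (which by Cauchy makes every $I_G(\frak{m})$ a $p'$-group) forces every $I_G(Q)$ to be a $p'$-group, and the converse is trivial since maximal ideals are prime.
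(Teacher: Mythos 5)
Your proof is correct, and its skeleton coincides with the paper's: the same chain (1)$\Leftrightarrow$(2), (2)$\Leftrightarrow$(3), (3)$\Leftrightarrow$(4), the same transversal/averaging argument for (1)$\Leftrightarrow$(2) (this is exactly Lemma \ref{ts_restrict}), the same external inputs (Theorem \ref{chase_harrison_rosenberg}, \cite{nonlin} Corollary 4.4, Sylow conjugacy), and the same monotonicity $I_G(Q)\le I_G(Q')$ for $Q\le Q'$ in the last step. The one place where you genuinely diverge is the reduction from ``all nontrivial $p$-elements'' to ``elements of order exactly $p$'' inside (2)$\Leftrightarrow$(3). The paper does this by a bare-hands telescoping identity: for $g$ of order $p^m$ it writes $a-(a)g^{p^{m-1}}$ as the sum $\sum_{i}\bigl((a)g^{i}-(a)g^{i+1}\bigr)$ and concludes that if the left side avoids $\frak{m}$ then some summand does, so the order-$p$ condition already implies the full CHR condition. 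You instead package the CHR condition as the triviality of the inertia groups $I_P(\frak{m})=I_G(\frak{m})\cap P$ and invoke Cauchy's theorem on these $p$-groups, together with an explicit conjugation formula $g\in I_G(\frak{m})\iff\pi\in I_P(\frak{m}')$ for $g=x\pi x^{-1}$. Both are sound; your route is slightly slicker and makes the Sylow-conjugacy step (which the paper dispatches in half a sentence) fully explicit, while the paper's telescoping identity is more elementary and self-contained in that it never leaves the ring $A$. Nothing is missing from your argument.
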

\begin{proof}
``(1) $\iff$ (2)":\ This follows from Lemma \ref{ts_restrict} and the fact that all Sylow $p$-groups
are conjugate. \\
``(2) $\iff$ (3)":\ By Theorem \ref{chase_harrison_rosenberg} and the conjugacy of Sylow groups,
condition (2) is equivalent to (3) with ``order $p$" replaced by ``order $p^m$ for some $m$". But
if $g\in G$ has order $p^m$ with $m>1$, then $g^{p^{m-1}}$ has order $p$ and
$a-(a)g^{p^{m-1}}=$  $(a-(a)g)+((a)g-(a)g^2)+((a)g^2-(a)g^3)+\cdots+ ((a)g^{p^{m-1}-1}-(a)g^{p^{m-1}}).$
So if $a-(a)g^{p^{m-1}}\not\in\frak{m}$, some $(a)g^i-(a)g^{i+1}\not\in\frak{m}$ also.\\
``(3) $\iff$ (4)":\ Obviously (3) is equivalent to (4) if the $Q$'s are maximal ideals. Since
$Q\le Q'\in {\rm spec}(A)$ implies $I_G(Q)\le I_G(Q')$ the claim follows.
\end{proof}

\begin{cor}\label{local_galois_geom}
If $\fld$ is an algebraically closed field of characteristic $p>0$, $X$ an affine $\fld$-variety
with $A=\fld[X]$ and $G$ a finite group, then the following are equivalent:
\begin{enumerate}
\item For every $x\in X$ the point-stabilizer $G_x$ has order coprime to $p$;
\item $A\in\ts_G$.
\end{enumerate}
\end{cor}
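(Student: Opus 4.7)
The plan is to reduce the statement to the equivalence (1) $\iff$ (3) already established in Proposition \ref{local_galois}, and then use the Nullstellensatz as a dictionary to translate between points of $X$ and maximal ideals of $A$.

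First, since $\fld$ is algebraically closed and $X$ is affine, Hilbert's Nullstellensatz gives a bijection $X \leftrightarrow {\rm max\text{-}spec}(A)$, $x \mapsto \mathfrak{m}_x := \{a \in A : a(x)=0\}$, with $A/\mathfrak{m}_x \cong \fld$. Under this bijection and for $g \in G$, I would argue that $g \in G_x$ if and only if $g$ acts trivially on $A/\mathfrak{m}_x$, which (because the residue field is $\fld$, on which $g$ must act as the identity) is equivalent to $a-(a)g \in \mathfrak{m}_x$ for every $a \in A$.

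Next, by Cauchy's theorem $|G_x|$ is coprime to $p$ if and only if $G_x$ contains no element of order $p$. Combining this with the observation above, condition (1) of the corollary becomes: for every $g \in G$ of order $p$ and every $x \in X$, there exists $a \in A$ with $a-(a)g \notin \mathfrak{m}_x$. Under the Nullstellensatz bijection this is exactly condition (3) of Proposition \ref{local_galois}, which by that proposition is equivalent to $A \in \ts_G$.

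The argument is essentially a clean translation, with no real obstacle — the only delicate point is ensuring the algebraically closed hypothesis is used correctly, namely to guarantee $A/\mathfrak{m}_x \cong \fld$, so that ``$g$ fixes $\mathfrak{m}_x$ as an ideal'' and ``$g$ acts trivially on $A/\mathfrak{m}_x$'' coincide, allowing the stabilizer condition to be read off from the membership $a-(a)g \in \mathfrak{m}_x$.
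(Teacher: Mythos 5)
Your proposal is correct and follows essentially the same route the paper intends: the paper states this corollary without a separate proof precisely because, as it observes just before Theorem \ref{aff act_and_gal thm1}, over an algebraically closed field a maximal ideal $\mathfrak{m}_x$ has residue field $\fld$, so $G_x=\{g\in G: a-(a)g\in\mathfrak{m}_x\ \forall a\}=I_G(\mathfrak{m}_x)$, and the statement reduces to the equivalence (1) $\iff$ (3) (equivalently (4)) of Proposition \ref{local_galois}. Your use of the Nullstellensatz and Cauchy's theorem just makes this translation explicit.
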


\section{Basic observations on trace-surjective $\fld -G$ algebras}

In the following we will recall some well known results from
representation theory of finite groups, which in many textbooks are
formulated and proved for \emph{finitely generated} modules over artinian
rings or algebras. In view of our applications we need to avoid those restrictions,
so we include short proofs of some of these results, whenever we need
to establish them in a more general context.

Let $R$ be a ring and $M$ an $R$-module. Then the
{\bf socle} of $M$ is the sum of all simple submodules, hence the
unique maximal semisimple submodule of $M$ and denoted by ${\rm Soc}(M)$.
We start with the following elementary observation:

\begin{lemma} \label{observation1}(\cite{nonlin} Lemma 2.1)
Let $I$ be an index set and $W$ a (left) $R$ - module with
submodules $V, V_i$ for $i\in I$. Then the following hold:
\begin{enumerate}
\item If $\sum_{i\in I} V_i = \oplus_{i\in I} V_i$ is a direct sum
in $W$, then
$${\rm Soc}(V)\cap (\oplus_{i\in I} V_i) = {\rm Soc}(V)\cap (\oplus_{i\in I} {\rm
Soc}(V_i)) = V\cap (\oplus_{i\in I} {\rm Soc}(V_i)).$$
\item Assume that ${\rm Soc}(V_i)\le V_i$ is an essential extension for every $i\in I$
(e.g. if $_RR$ is artinian), then we have
$$\sum_{i\in I} V_i = \oplus_{i\in I} V_i
\iff \sum_{i\in I} {\rm Soc}(V_i) = \oplus_{i\in I} {\rm
Soc}(V_i).$$
\end{enumerate}
\end{lemma}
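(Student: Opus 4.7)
The plan is to handle part (1) first, as it feeds directly into the argument for part (2). For part (1), the key observation is that any simple $R$-submodule $S$ contained in $\oplus_i V_i$ must in fact lie in $\oplus_i {\rm Soc}(V_i)$: for each coordinate projection $\pi_j$, the image $\pi_j(S)$ is a quotient of the simple module $S$, hence either zero or simple, so $\pi_j(S) \le {\rm Soc}(V_j)$. Since ${\rm Soc}(V)$ is the sum of its simple submodules, this yields ${\rm Soc}(V) \cap (\oplus_i V_i) \le \oplus_i {\rm Soc}(V_i)$, while the reverse inclusion is tautological; this gives the first equality. The second equality then reduces to showing that every element of $V \cap (\oplus_i {\rm Soc}(V_i))$ lies in ${\rm Soc}(V)$, which follows because such an element sits in a \emph{finite} sum of semisimple submodules, which is itself a semisimple submodule of $V$.

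For part (2), the direction $\Rightarrow$ is immediate from ${\rm Soc}(V_i) \le V_i$. For $\Leftarrow$, my plan is to reduce to the case of finitely many summands and induct on their number: any failure of directness in $\sum_{i \in I} V_i$ is witnessed by a finite subfamily, so I may assume $I = \{1, \ldots, n\}$. Assuming inductively that $\sum_{i<n} V_i$ is already a direct sum, I suppose for contradiction that $V_n \cap (\oplus_{i<n} V_i) \ne 0$. The essentiality hypothesis then supplies a nonzero element $s \in {\rm Soc}(V_n) \cap (\oplus_{i<n} V_i)$. The submodule $Rs$ is semisimple and contained in the direct sum $\oplus_{i<n} V_i$, so part (1), applied with $V := Rs$ to the already-direct family $\{V_i\}_{i<n}$, forces $s \in \oplus_{i<n} {\rm Soc}(V_i)$. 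Combined with $s \in {\rm Soc}(V_n)$, this contradicts the assumed directness of $\sum_i {\rm Soc}(V_i)$.

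The main obstacle I anticipate is organizing part (2) so that part (1) is applicable at the inductive step: part (1) needs directness of the $V_i$'s as a hypothesis, which is only supplied by the induction. Once this bookkeeping is arranged, the argument reduces to a one-line contradiction combining essentiality with part (1).
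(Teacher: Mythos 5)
Your proof is correct. The paper gives no proof of this lemma itself --- it is quoted from \cite{nonlin}, Lemma 2.1 --- so there is no in-paper argument to compare against, but yours is the standard one and it goes through: part (1) via coordinate projections of simple submodules into the socles of the summands (the one clause worth making explicit is that ${\rm Soc}(V)\cap(\oplus_i V_i)$, being a submodule of the semisimple module ${\rm Soc}(V)$, is itself a sum of simple submodules each lying in $\oplus_i V_i$, which is what lets you reduce to the simple case), and part (2) via reduction to finite subfamilies, essentiality of ${\rm Soc}(V_n)\le V_n$ to produce the nonzero socle element $s$, and an application of part (1) to $Rs$.
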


Now let $\fld $ be a field, $G$ a finite group and $V$ a (left) $\fld G$ -
module. For any subgroup $H\le G$, we denote by $V^H$ the space of
$H$ - fixed points in $V$ and define the (relative) transfer map
$$t_H^G:\ V^H \to V^G,\ v \mapsto \sum_{g\in G\backslash H}\ ^gv,$$
where $G\backslash H$ is a system of coset representatives such that
$G = \uplus_{g\in G\backslash H}\ gH.$ If $W$ is another left
$\fld G$-module, then $G$ has a natural (left) action on ${\rm
Hom}_k(V,W)$ by conjugation, i.e. $^g\alpha(v) =
g(\alpha(g^{-1}(v)))$ with $${\rm Hom}_k(V,W)^H = {\rm
Hom}_{kH}(V,W).$$ Note that $t_H^G(\alpha) = \sum_{x\in G\backslash
H}\ ^x\alpha\in {\rm Hom}_{kH}(V,W)$.

The following is D. Higman's criterion for \emph{relative
$\fld H$-projectivity} of a $\fld G$-module:

\begin{prp}\label{higman}(\cite{nonlin} Proposition 2.2.)
Let $V$ be a $\fld G$-module, then the following are equivalent:
\begin{enumerate}
\item There is $\alpha\in {\rm End}_{kH}(V)$ with
$t_H^G(\alpha)={\rm id}_V$.
\item $V$ is a direct summand of $\fld G\otimes_{kH}V.$
\end{enumerate}
A module $V$ satisfying one of these equivalent conditions is called
{\bf relatively $H$-projective}.
\end{prp}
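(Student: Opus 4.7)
The plan is to set up a bijection between the data of (1) and (2) via the canonical $kG$-linear surjection
$$\pi:\ kG\otimes_{kH} V\to V,\quad g\otimes v\mapsto gv,$$
noting that (2) is equivalent to the existence of a $kG$-linear section $s$ of $\pi$. With a fixed system of left coset representatives $T:=G\backslash H$ containing $1$, the correspondence will be $\alpha\leftrightarrow s$ via $s(v)=\sum_{x\in T}x\otimes \alpha(x^{-1}v)$.

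For (1) $\Rightarrow$ (2), given $\alpha\in\End_{kH}(V)$ with $t_H^G(\alpha)=\id_V$, I would define $s_\alpha(v):=\sum_{x\in T} x\otimes \alpha(x^{-1}v)$. A short check using $kH$-linearity of $\alpha$ shows $s_\alpha$ is independent of the choice of $T$; replacing $T$ by $\{yx\mid x\in T\}$ then yields $y\cdot s_\alpha(v)=s_\alpha(yv)$ for all $y\in G$, so $s_\alpha$ is $kG$-linear. Finally $\pi\circ s_\alpha(v)=\sum_{x\in T} x\alpha(x^{-1}v)=t_H^G(\alpha)(v)=v$, exhibiting $V$ as a $kG$-direct summand of $kG\otimes_{kH}V$.

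For the converse (2) $\Rightarrow$ (1), I would start from the decomposition $kG=\bigoplus_{x\in T}x\cdot kH$ of $kG$ as a right $kH$-module, giving a $k$-linear direct sum $kG\otimes_{kH}V=\bigoplus_{x\in T}(x\otimes V)$, and let $p:kG\otimes_{kH}V\to V$ be projection onto the summand $1\otimes V\cong V$. Since left multiplication by $h\in H$ preserves $1\otimes V$ (acting there as $h$) and permutes the other summands among themselves, $p$ is $kH$-linear (though not $kG$-linear). Setting $\alpha:=p\circ s$ and writing $s(v)=\sum_{x\in T}x\otimes w_x(v)$, the $kG$-linearity of $s$ gives $s(x^{-1}v)=x^{-1}s(v)=\sum_{y\in T} x^{-1}y\otimes w_y(v)$; the only summand landing in $1\otimes V$ corresponds to $y=x$ and contributes $1\otimes w_x(v)$, so $\alpha(x^{-1}v)=w_x(v)$. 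Therefore $t_H^G(\alpha)(v)=\sum_{x\in T} x\,w_x(v)=\pi(s(v))=v$, as required.

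The one real subtlety is that $p$ is only $kH$-linear, so it does not commute with the full $G$-action; the argument succeeds because this failure of $G$-equivariance is exactly absorbed by summing over the transfer, and packaging this compensation is precisely the content of Higman's criterion. Otherwise both directions are straightforward unpacking of definitions, and the two assignments $\alpha\mapsto s_\alpha$ and $s\mapsto p\circ s$ turn out to be mutually inverse, so the equivalence can be upgraded to a natural bijection between the data of (1) and (2).
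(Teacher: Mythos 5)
First, a remark on context: the paper does not reprove this proposition --- it is quoted verbatim from \cite{nonlin}, Proposition 2.2 --- so there is no in-text proof to compare against line by line. Your construction is the standard proof of Higman's criterion, and the direction (1) $\Rightarrow$ (2) is complete and correct: $s_\alpha(v)=\sum_{x\in T}x\otimes\alpha(x^{-1}v)$ is well defined because $\alpha$ is $\fld H$-linear, is $\fld G$-linear by the change-of-transversal argument, and satisfies $\pi\circ s_\alpha=t_H^G(\alpha)={\rm id}_V$. The coset computation in the converse direction is also correct \emph{given} a $\fld G$-linear section $s$ of $\pi$.

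The gap is in your opening sentence, where you assert that (2) is equivalent to the existence of a $\fld G$-linear section of the particular surjection $\pi$. The nontrivial half of that assertion --- that if $V$ is a direct summand of $\fld G\otimes_{\fld H}V$ then $\pi$ itself splits --- is not a formality: in general a module $V$ can be a direct summand of $M$ while a given epimorphism $M\twoheadrightarrow V$ fails to split (take $M=V\oplus W$ and the epimorphism $(v,w)\mapsto g(w)$ for a surjection $g:W\to V$ with no section). Here the assertion is true, but only \emph{because} of the criterion being proved, so as written the implication (2) $\Rightarrow$ (1) is circular at exactly this step. The repair is small and keeps your computation intact: start from an arbitrary splitting pair, i.e.\ $\fld G$-maps $i:V\to \fld G\otimes_{\fld H}V$ and $q:\fld G\otimes_{\fld H}V\to V$ with $q\circ i={\rm id}_V$. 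Let $e:=\iota\circ p$ be your $\fld H$-linear projection of $\fld G\otimes_{\fld H}V$ onto the summand $1\otimes V$, where $\iota(v)=1\otimes v$; your coset computation shows precisely that $t_H^G(e)={\rm id}$ on $\fld G\otimes_{\fld H}V$. Since the relative trace satisfies $t_H^G(q\circ\beta\circ i)=q\circ t_H^G(\beta)\circ i$ for $\fld G$-linear $q$ and $i$, the element $\alpha:=q\circ e\circ i\in\End_{\fld H}(V)$ satisfies $t_H^G(\alpha)=q\circ{\rm id}\circ i={\rm id}_V$; your version is the special case $q=\pi$, $i=s$. With this adjustment the proof is complete, and your bijection between the data of (1) and sections of $\pi$ is then a legitimate afterthought, since once the equivalence is established every such $V$ does admit a section of $\pi$.
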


\begin{rem}\label{proj}
Note that $\fld G\otimes_kV \cong \oplus_{i\in I} kG^{(i)}$, if $V\cong
\oplus_{i\in I} k^{(i)}$ as a $\fld $-space. Hence $V$ is relatively
$1$-projective, if and only if $V$ is a summand of a free
$\fld G$-module, i.e. if and only if $V$ is projective.
\end{rem}

Let $P$ be a finite $p$ - group and $\fld $ have characteristic $p$.
The following lemma is well known for finitely generated
$\fld G$-modules, but is true in general (see \cite{nonlin} Lemma 2.3):

\begin{lemma} \label{rec_free}
For any $V\in Mod-\fld P$ the following are
equivalent:
\begin{enumerate}
\item
$t_1^P(V) \ne 0$;
\item there is a free direct summand $0\ne F\le V$ containing $t_1^P(V)$.
\end{enumerate}
Moreover $V$ is free if and only if $t_1^P(V) = V^P$. If $v\in V$
satisfies $t_1^P(v)\ne 0$, then
$\langle vg\ |g\in P\rangle \in mod-\fld P-mod$ is free of rank one.
\end{lemma}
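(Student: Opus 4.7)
The plan is to prove the three assertions in the order: last sentence (rank-one claim), main equivalence (1)$\iff$(2), and ``moreover'' clause. For the rank-one claim, given $v\in V$ with $t_1^P(v)\ne 0$, consider the $\fld P$-linear surjection $\phi:\fld P\to\langle vP\rangle$, $g\mapsto vg$. Because the trivial module is the only simple $\fld P$-module and $(\fld P)^P=\fld\hat P$ where $\hat P:=\sum_{g\in P}g$, the module $\fld P$ has $\fld\hat P$ as its unique minimal submodule; every nonzero submodule of $\fld P$ therefore contains $\hat P$. But $\phi(\hat P)=t_1^P(v)\ne 0$, so $\ker\phi=0$ and $\langle vP\rangle\cong\fld P$ is free of rank one.

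For the equivalence, (2)$\Rightarrow$(1) is immediate: a nonzero free $\fld P$-module $F$ satisfies $t_1^P(F)=F^P\ne 0$, and this lies in $t_1^P(V)$. For (1)$\Rightarrow$(2) I apply Zorn's lemma to the family $\Ss$ of subsets $S\subseteq V$ satisfying: (a) $t_1^P(v)\ne 0$ for every $v\in S$; (b) $\{t_1^P(v)\mid v\in S\}$ is $\fld$-linearly independent in $V^P$; (c) $\sum_{v\in S}\langle vP\rangle$ is direct. By hypothesis $\Ss\ne\emptyset$, and chains have their unions in $\Ss$. For a maximal $S$, set $F:=\bigoplus_{v\in S}\langle vP\rangle$, which is free by the rank-one claim, with ${\rm Soc}(F)=\bigoplus_{v\in S}\fld\cdot t_1^P(v)$. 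If some $w\in V$ had $t_1^P(w)\notin{\rm Soc}(F)$, then $\{t_1^P(v)\mid v\in S\}\cup\{t_1^P(w)\}$ would remain $\fld$-linearly independent; Lemma \ref{observation1}(2), applied via the essentiality of the socle in each copy of $\fld P$, would then force $F+\langle wP\rangle=F\oplus\langle wP\rangle$, contradicting maximality. Hence $t_1^P(V)\subseteq{\rm Soc}(F)\subseteq F$.

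The remaining and main obstacle is to show that $F$ itself is a direct summand of $V$. Here I invoke that $\fld P$ is a finite-dimensional symmetric (and in particular self-injective) $\fld$-algebra, and that $\fld P$ is noetherian; by the Bass--Papp theorem, arbitrary direct sums of injective modules over a noetherian ring are injective. Therefore $F\cong\bigoplus_{S}\fld P$ is injective, the inclusion $F\hookrightarrow V$ splits, and $V=F\oplus M$ for some $\fld P$-submodule $M$, completing (1)$\Rightarrow$(2).

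For the ``moreover'' clause, one direction is visible summand by summand. Conversely, if $t_1^P(V)=V^P$, the decomposition $V=F\oplus M$ just obtained gives $V^P=F^P\oplus M^P$ with $F^P=t_1^P(F)=t_1^P(V)=V^P$, forcing $M^P=0$. But for a $p$-group $P$ in characteristic $p$ every nonzero $\fld P$-module has a nonzero fixed vector, since any nonzero cyclic submodule is a finite-dimensional quotient of $\fld P$ and hence has the trivial module as a composition factor. Therefore $M=0$ and $V=F$ is free.
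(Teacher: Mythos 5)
Your proof is correct, and it uses exactly the machinery the paper intends for this lemma (whose proof it defers to \cite{nonlin}, Lemma 2.3): the rank-one claim via the simple essential socle $\fld\hat P$ of $\fld P$, Lemma \ref{observation1}(2) to convert linear independence of traces into directness of the sum of cyclic free submodules, and Bass' theorem on direct sums of injectives over a noetherian ring to split off $F$ — the same pattern the paper itself runs in the proof of Theorem \ref{main}. No gaps; the Zorn's lemma setup and the reduction of the ``moreover'' clause to $M^P=0$ are both sound.
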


In the following $\fld $ is a field of characteristic $p\ge 0$ and $G$ is a
finite group. A $\fld $-algebra $R$ will be called a $\fld-G$ algebra, if
$G$ acts on $R$ by $\fld $-algebra automorphisms. This renders $R$
a $\fld G$-module. With $\Galg$ we denote the category of \emph{commutative}
$\fld-G$ algebras with $G$-equivariant algebra homomorphisms and we set
$\fld{\tt alg}:=\fld_1{\tt alg}$ to denote the category of all commutative $\fld$-algebras.

If this $\fld G$-module is trace-surjective, then $R$ is called
a {\bf trace-surjective $\fld -G$ algebra}.

\begin{lemma}\label{tr}
Let $R$ be a $\fld -G$-algebra and $H\le G$ a subgroup then the
following are equivalent:
\begin{enumerate}
\item $1=t_H^G(r)$ for some $r\in R$;
\item $R^G=t_H^G(R^H)$;
\item $R$ is relatively $H$-projective as a $\fld G$-module.
\end{enumerate}
\end{lemma}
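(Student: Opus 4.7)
The plan is to prove the cycle (1) $\Rightarrow$ (2) $\Rightarrow$ (1) and (1) $\iff$ (3). The equivalence of (1) and (2) is immediate from the definitions: (2) $\Rightarrow$ (1) just takes $1 \in R^G$ and writes it as $t_H^G(r)$, while (1) $\Rightarrow$ (2) uses that $R^G$ acts on itself through the trace by the projection formula. Namely, given $r \in R^H$ with $t_H^G(r) = 1$ and any $s \in R^G$, the product $sr$ lies in $R^H$ (since $s$ is fixed by all of $G$ and $r$ by $H$), and one computes
\[
t_H^G(sr) = \sum_{g \in G/H} {}^g(sr) = \sum_{g \in G/H} {}^gs \cdot {}^gr = s \cdot \sum_{g \in G/H} {}^gr = s \cdot t_H^G(r) = s,
\]
showing $R^G \subseteq t_H^G(R^H)$; the reverse inclusion is the definition of the transfer.

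For (1) $\Rightarrow$ (3), I would apply Higman's criterion (Proposition \ref{higman}) by constructing an explicit $\fld H$-endomorphism. Given $r \in R^H$ with $t_H^G(r) = 1$, define $\alpha \in \End_\fld(R)$ by $\alpha(x) := rx$. Since $r$ is $H$-fixed, $\alpha$ commutes with the $H$-action (i.e.\ $\alpha \in \End_{\fld H}(R)$), and applying the conjugation-action definition of $t_H^G$ on $\End_\fld(R)$ one obtains, for every $x \in R$,
\[
t_H^G(\alpha)(x) = \sum_{g \in G/H} {}^g\bigl(\alpha({}^{g^{-1}}x)\bigr) = \sum_{g \in G/H} {}^gr \cdot x = t_H^G(r) \cdot x = x,
\]
so $t_H^G(\alpha) = \id_R$ and $R$ is relatively $H$-projective.

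For (3) $\Rightarrow$ (1), take any $\alpha \in \End_{\fld H}(R)$ with $t_H^G(\alpha) = \id_R$ and evaluate at the identity element $1 \in R$. Because $\alpha$ is $H$-equivariant and $1$ is $H$-fixed, the element $r := \alpha(1)$ lies in $R^H$; and the identity $\id_R(1) = 1$ reads exactly $t_H^G(r) = 1$. No serious obstacle arises: the only point worth flagging is the use of the unit element $1 \in R$ to convert a module-theoretic splitting into an algebra-theoretic trace equation — this is the reason the implication (3) $\Rightarrow$ (1) holds in the algebra category even though the analogous statement for general $\fld G$-modules requires choosing a distinguished vector.
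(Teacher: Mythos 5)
Your proposal is correct and follows essentially the same route as the paper: the equivalence of (1) and (2) via the projection formula (the paper phrases this as $t_H^G(R^H)$ being an ideal of $R^G$), and the equivalence with (3) via Higman's criterion, using multiplication by $r$ to pass from a trace-one ring element to a trace-identity endomorphism and evaluation at $1_R$ to go back. The paper merely packages these two constructions as the $G$-equivariant maps $\mu\colon R\to \End_\fld(R)$ and $e\colon \End_\fld(R)\to R$, which is the same computation you perform directly.
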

\begin{proof} Clearly (1) $\iff$ (2), since $t_H^G(R^H)$ is a two-sided ideal
of $R^G$. \\
For $r\in R$ let $\mu_r\in {\rm End}_k(R)$ denote the
homomorphism given by left-multiplication, i.e. $\mu_r(s)=r\cdot s$
for all $s\in R$. Then
$$^g(\mu_r)(s) = g(\mu_r(g^{-1}s)) = g(r\cdot g^{-1}s)=(gr)\cdot s =
\mu_{gr}(s),$$ hence the map
$$\mu:\ R\to {\rm End}_k(R),\ r\mapsto \mu_r$$ is a unital homomorphism of $\fld -G$-algebras.
On the other hand the map
$$e:\ {\rm End}_k(R) \to R,\ \alpha\mapsto
\alpha(1)$$ satisfies $$e(^g\alpha) = g(\alpha(g^{-1}1)) =
g\alpha(1)=g\cdot e(\alpha),$$ hence it is a homomorphism of
$\fld G$-modules with $e({\rm id}_R)=1$. We have $e\circ \mu = {\rm
id}_R$ and $(\mu\circ e)({\rm id}_R) = {\rm id}_R$. If $1=t_H^G(r)$,
then ${\rm id}_R=\mu(1)=\mu(t_H^G(r)) = t_H^G(\mu(r))$. On the other
hand if ${\rm id}_R = t_H^G(\alpha)$, then $1= e({\rm id}_R) =
t_H^G(e(\alpha))$. It now follows from Lemma \ref{higman} that (1) and (3)
are equivalent.
\end{proof}

\begin{thm}\label{main}
Let $R\ne 0$ be a $\fld -G$ - algebra. Then the following are equivalent:

\begin{enumerate}
\item[(i)] $1 = t_1^G(r)$ for some $r\in R$.
\item[(ii)] $R^G = t_1^G(R).$
\item[(iii)] $R$ is a trace-surjective $\fld -G$-algebra.
\item[(iv)] There is a $\fld G$ -submodule $W\le R$, isomorphic
to the projective cover $P(k)$ of the trivial $\fld G$-module, such that
$1_R\in W$.
\end{enumerate}

\noindent Assume that one of these conditions is satisfied. Let
$\{r_i\ |\ i\in I\}$ be a $\fld $-basis of the ring of invariants $R^G$
and $\{w_j\ |\ j=1,\cdots,s\}$ a basis of $W\le R$ (with $1\in W
\cong P(k)$). Then the following hold:
\begin{enumerate}
\item For every $0\ne r\in R^G$ we have $rW \cong W\cong P(k)$.
\item $R=R^G\cdot W \oplus C$ with $R^G\cdot W = \oplus_{i\in I}r_i\cdot W = \oplus_{j=1}^sR^G\cdot w_j$
and $C$ is a projective $\fld G$-module not containing a summand $\cong
P(k)$. In particular, $R^G\cdot W$ is a free $R^G$-module.
\item For every $G$-stable proper left ideal $J\unlhd\ _RR$ the left $\fld G$-module $R/J$ is projective
and we have $(R/J)^G \cong R^G/J^G$. For every $G$-stable two-sided proper
ideal $I\unlhd R$ the quotient ring $\bar R:= R/I$ is again a
trace-surjective $\fld -G$-algebra.
\end{enumerate}
\end{thm}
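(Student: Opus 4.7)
The equivalences (i)$\Leftrightarrow$(ii)$\Leftrightarrow$(iii) are essentially formal: $t_1^G(R)$ is an ideal of $R^G$, so it equals $R^G$ precisely when it contains $1_R$, and (iii) is just the definition. The heart of the theorem is (i)$\Leftrightarrow$(iv). From (i), Lemma \ref{tr} applied with $H=1$ shows that $R$ is $\fld G$-projective. To produce $W$, I would consider the $\fld G$-homomorphism $\phi:\fld G\to R$, $g\mapsto gr$, where $t_1^G(r)=1$. Then $\phi(N)=1$ for $N:=\sum_{g\in G}g$. Since $\fld G$ is a symmetric algebra, ${\rm Soc}(P(\fld))\cong\fld$ is essential in the indecomposable projective $P(\fld)$, and the socle of $\fld G$ contains the trivial module with multiplicity $\dim\fld=1$; hence the unique trivial submodule $\fld\cdot N\le\fld G$ lies inside the unique $P(\fld)$-summand of $\fld G$. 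The restriction $\phi_1:=\phi|_{P(\fld)}$ sends $N\mapsto 1\ne 0$, and essentiality of the socle forces $\phi_1$ to be injective; thus $W:=\phi_1(P(\fld))$ is the required submodule of $R$. Conversely, if (iv) holds, then $W$ is projective with $W^G\cong P(\fld)^G\cong\fld$, spanned by $1$; Higman's criterion (Proposition \ref{higman}) gives $t_1^G(W)=W^G\ni 1$, whence (i).

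For assertion (1), the map $W\to rW$, $w\mapsto rw$, is $\fld G$-linear (as $r\in R^G$); any nonzero kernel would contain the essential socle $\fld\cdot 1$ of $W$, forcing $r=r\cdot 1=0$. For (2), I would apply Lemma \ref{observation1}(2) to $\{r_iW\}_{i\in I}$: each $r_iW\cong P(\fld)$ has essential socle $\fld\cdot r_i$, and the socles are linearly independent in $R^G$, hence $\sum_i r_iW=\bigoplus_i r_iW=R^G\cdot W$. Since $R$ is $\fld G$-projective and $\fld G$ is a (noetherian) self-injective algebra, the projective submodule $R^G\cdot W$ is itself injective, hence a direct summand of $R$. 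Its complement $C$ is projective; the dimension count $(R^G\cdot W)^G=\bigoplus_i\fld\cdot r_i=R^G$ forces $C^G=0$, and the identity ${\rm Soc}(P(S))\cong S$ (again from $\fld G$ being symmetric) rules out any $P(\fld)$-summand in $C$. Finally, $\{r_iw_j\}_{i,j}$ is a $\fld$-basis of $R^G\cdot W$ (inherited from the direct sum $\bigoplus r_iW$), from which $\{w_j\}$ is an $R^G$-basis of the free $R^G$-module $R^G\cdot W$.

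For assertion (3), the $\fld$-linear endomorphism $\mu_r\in\End_\fld(R)$ of left-multiplication by $r$ satisfies $t_1^G(\mu_r)=\mu_{t_1^G(r)}=\id_R$, exactly as in the proof of Lemma \ref{tr}. For a $G$-stable proper left ideal $J$, $\mu_r$ descends to $\bar\mu_r\in\End_\fld(R/J)$ with $t_1^G(\bar\mu_r)=\id_{R/J}$, so Higman's criterion yields projectivity of $R/J$. The identity $s=t_1^G(rs)-\sum_g(gr)(gs-s)$, which uses $\sum_g gr=1$ together with $g(rs)=(gr)(gs)$, shows that every $\bar s\in(R/J)^G$ (i.e.\ $gs-s\in J$) lifts modulo $J$ to $t_1^G(rs)\in R^G$; this together with the tautology $J^G=J\cap R^G$ gives $(R/J)^G=R^G/J^G$. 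For a two-sided proper $G$-stable ideal $I$, the same reasoning shows $\tr(R/I)=\overline{\tr(R)}=\overline{R^G}=(R/I)^G$, i.e.\ $R/I\in\ts_G$. The main obstacle throughout is (i)$\Rightarrow$(iv): it rests on the structural fact that $\fld G$ is a symmetric algebra, so that ${\rm Soc}(P(\fld))\cong\fld$ is essential in the indecomposable projective $P(\fld)$ — an input that goes slightly beyond the material developed explicitly in the preceding section.
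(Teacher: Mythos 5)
Your proof is correct and takes essentially the same route as the paper's: the same homomorphism $\fld G\to R$, $g\mapsto g\cdot r$, with the image of the $P(\fld)$-summand identified via its simple essential socle (where the paper computes $G^+\epsilon=G^+$ for a primitive idempotent $\epsilon$), the same combination of Lemma \ref{observation1} and Bass's theorem for assertion (2), and the same $\mu_r$/Higman argument for assertion (3). No gaps.
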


\begin{proof} The equivalence of (i),(ii) and (iii) follows from Lemma \ref{tr}.
Assume that (i) holds and let $P(k)\cong kG\epsilon$ with $\epsilon
= \epsilon^2$ a primitive idempotent in $\fld G$. Let $G^+:= \sum_{g\in
G} g = t_1^G(1_G)\in kG$, then $G^+\epsilon=G^+$ and the
$\fld G$-homomorphism $\theta:\ kG \to R$ defined by $g\mapsto g\cdot r$
maps $G^+$ to $1_R$. It follows that $t_1^G(\theta(\epsilon)) =
\theta(G^+\epsilon) = \theta(G^+)=1_R$. Since $\fld GG^+\epsilon=kG^+=
{\rm Soc}(P(k))$ is mapped to $\fld \cdot 1_R$, we see that
$\theta_{|P(k)}$ is injective and $1_R\in W:= \theta(P(k))\cong P(k).$
Assume that (iv) holds, then $1_R \in W^G = {\rm Soc}(W)=t_1^G(W)$,
hence (i) holds. We assume now that (iv) holds and will prove
statements (1)-(3).\\
(1):\ Since $W\cong P(k)$, we have $W^G \cong k$ and we can choose
the basis $\{w_i\ |\ i=1,\cdots,s\}$ such that $1 = \sum_{g\in G}\
^gw_1 = t_1^G(w_1)$. Assume $r\in R^G$ such that $rW\not\cong W$,
then $rW\cong W/X$ with $0\ne X\le W$, so $W^G=t_1^G(W)\le X$ and
$t_1^G(W/X) \le W^GX/X \le X/X=0$. Hence
$$r = r\cdot 1 = r\cdot t_1^G(w_1) = t_1^G(r\cdot w_1) =0.$$
(2):\ Since for all $i$ we have $r_iW \cong W$, ${\rm
Soc}(r_iW)=k\cdot r_i$. It follows from Lemma \ref{observation1}, that
$R^G\cdot W = \oplus_{i\in I}k\cdot r_i\cdot W$ with each $r_i\cdot
W \cong P(k)$ and again this is an injective module by H. Bass'
theorem. Hence $R = R^G\cdot W \oplus C$ with some complementary
projective $\fld G$ - module $C$. However $C^G \le R^G \le R^G\cdot W$,
since $1\in W$ and therefore $C^G=0$. Thus $C$ is a projective
$\fld G$-module not containing a summand isomorphic to $P(k)$, as required.\\
(3):\ Let $J\unlhd R$ be a $G$-stable left-ideal. Then ${\rm
id}_{|R/J} = (\mu_1)_{|R/J} = t_1^G(\mu_r)_{|R/J},$ hence $R/J$ is a
projective $\fld G$-module by \ref{higman}. Let $\bar x\in (R/J)^G$,
then
$$\bar x = 1\cdot \bar x = t_1^G(r)\bar x = \sum_{g\in G} gr\cdot
\bar x = \sum_{g\in G} gr\cdot g\bar x =$$ $$\sum_{g\in G}
g(\overline{rx}) = t_1^G(\overline{rx}) = \overline{t_1^G(rx)} \in
R^G/J^G.$$ Hence $R^G/J^G = (R/J)^G$. The last claim is obvious,
since $t_1^G(\bar r) = \sum_{g\in G} \bar r^g = \bar 1.$ \end{proof}

\begin{prp}\label{gen_by_pts}
Let $\fld $ be a field and $G$ a finite group, then the following holds:\\
Every trace-surjective $\fld -G$ algebra $R$ is generated as $\fld $-algebra by
is elements of trace $1$ if and only $0<{\rm char}(k)=p\ |\ |G|$.
\end{prp}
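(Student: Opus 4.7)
My plan is to handle the two implications separately, exploiting the single key identity
\[
\tr(c)=|G|\cdot c \quad \text{for all } c\in R^G,
\]
which vanishes precisely when $\chr(\fld)\mid |G|$. This dichotomy is what drives the equivalence.

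For the ``if'' direction, assume $p:=\chr(\fld)>0$ and $p\mid |G|$. Let $S\le R$ denote the $\fld$-subalgebra generated by $T:=\{r\in R\mid \tr(r)=1\}$, and fix $r_0\in T$ supplied by trace-surjectivity. Since $T=r_0+\ker(\tr)$ as a $\fld$-affine set, forming differences of trace-$1$ elements immediately gives $\ker(\tr)\subseteq S$. The hypothesis $p\mid |G|$ now enters: it forces $R^G\subseteq \ker(\tr)\subseteq S$. For an arbitrary $r\in R$ I would then use the decomposition
\[
r \;=\; r_0\cdot \tr(r) \;+\; \bigl(r - r_0\cdot \tr(r)\bigr).
\]
The first summand is a product of two elements of $S$ (as $\tr(r)\in R^G\subseteq S$), and the $R^G$-linearity of $\tr$ together with $\tr(r_0)=1$ yields $\tr(r-r_0\tr(r))=0$, placing the second summand in $\ker(\tr)\subseteq S$. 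Hence $r\in S$ and $S=R$.

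For the converse, whenever $|G|$ is invertible in $\fld$ (the union of $\chr(\fld)=0$ and $\chr(\fld)\nmid |G|$), I would exhibit the explicit counterexample $R=\fld[x]$ with trivial $G$-action: here $\tr$ is multiplication by $|G|\in\fld^\times$, hence bijective, so $R\in\ts_G$, yet the unique element of trace $1$ is the scalar $|G|^{-1}\in\fld$, which generates only the proper subalgebra $\fld\subsetneq\fld[x]=R$. The entire argument is essentially bookkeeping once the identity $\tr|_{R^G}=|G|\cdot\id$ is recognised; the only mildly subtle point — and the real ``obstacle'' — is seeing that the decomposition trick above crucially requires $\tr(r)\in S$, which in turn is precisely what the hypothesis $p\mid |G|$ delivers.
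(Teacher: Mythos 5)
Your proof is correct and follows essentially the same route as the paper: the same counterexample $\fld[x]$ with trivial action for the ``only if'' direction, and for the ``if'' direction the same exploitation of $\tr|_{R^G}=|G|\cdot\id=0$ together with a decomposition of an arbitrary element into a piece controlled by a fixed point $r_0$ and a correction term. The only (cosmetic) difference is that the paper writes $a=(a-\lambda(\tr(a)-1))+\lambda(\tr(a)-1)$, splitting off a trace-one element plus $\lambda$ times an invariant, whereas you first establish $\ker(\tr)\subseteq S$ via differences of points and then split off $r_0\cdot\tr(r)$ plus a trace-zero element — both are valid instances of the same bookkeeping.
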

\begin{proof} If ${\rm char}(k)=0$ or $0<{\rm char}(k)=p$ does not divide $|G|$, then the
polynomial ring $R:=k[T]$ in one variable with trivial $G$-action is trace-surjective.
Certainly the unique element of trace one, namely $1/|G|\in k$, does
not generate that ring. Now suppose $0<{\rm char}(k)=p$ divides $|G|$, let
$\lambda\in R$ be of trace one and $r\in R^G$. Then
$\mu:=\lambda+r$ satisfies ${\rm tr}_G(\mu)={\rm tr}_G(\lambda)+|G|\cdot r=1$, so
$r=\mu-\lambda$, hence $R^G\subseteq B:=k\langle s\in R\ |\ {\rm tr}_G(s)=1\rangle$
and therefore $R^G=B^G$. Let $a\in R$ be an arbitrary element, then
$a=(a-\lambda({\rm tr}_G(a)-1))+\lambda({\rm tr}_G(a)-1)$.
Since ${\rm tr}_G(a-\lambda({\rm tr}_G(a)-1))=$
${\rm tr}_G(a)-{\rm tr}_G(\lambda)({\rm tr}_G(a)-1)=1$, we have
$a-\lambda({\rm tr}_G(a)-1)\in B$ and
$\lambda({\rm tr}_G(a)-1)\in \lambda\cdot R^G\subseteq B$, so $a\in B$ and therefore $R=B$.
\end{proof}

From now on we assume that $\fld $ is a field of characteristic $p>0$. With $\ts$ or
${\ts}_G$ we denote the category consisting of \emph{commutative} trace surjective
$\fld-G$-algebras. If $R\in \ts$ and $r\in R$ satisfies ${\rm tr}_G(r)=1$, then we call $r$ a ``point" in $R$ and denote with $\Pp_R$ the set of all points in $R$. With $\ts^o$ or $\ts_G^o$ we denote the class of algebras $\ts_G$ which are generated by
points. Thus we have

\begin{cor}\label{ts_gen_by_pts}
Let $G$ be a finite group of order divisible by $p$. Then $\ts=\ts^o$, in other
words, every $R\in \ts_G$ is generated by its points.
\end{cor}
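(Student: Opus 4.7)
The plan is to deduce this corollary directly from Proposition~\ref{gen_by_pts}, which already proves the stronger biconditional characterising exactly when every trace-surjective $\fld$--$G$ algebra is generated by its elements of trace one: the condition is $0 < \chr(\fld) = p \mid |G|$. Since this is precisely our hypothesis, the ``if'' direction of that proposition will give $\ts_G = \ts_G^o$ immediately, and there is nothing further to do.

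If one wanted a self-contained argument instead of an appeal to Proposition~\ref{gen_by_pts}, the route I would follow mirrors its proof in two short steps. First, starting from a fixed point $\lambda \in \Pp_R$ (which exists by trace-surjectivity) and setting $B := \fld\langle \Pp_R\rangle$, I would show $R^G \subseteq B$ by observing that for any $r \in R^G$ the shifted element $\lambda + r$ again has trace one, since $|G| = 0$ in $\fld$; hence $r = (\lambda + r) - \lambda$ is a difference of points and lies in $B$. Second, for arbitrary $a \in R$ I would use the telescoping decomposition
$$a = \bigl(a - \lambda(\tr_G(a) - 1)\bigr) + \lambda(\tr_G(a) - 1)$$
and verify that the first summand has trace one (a one-line calculation using $\tr_G(\lambda) = 1$), while the second lies in $\lambda \cdot R^G \subseteq B$ by step one. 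This forces $a \in B$, hence $R = B$.

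The only point that might be called an obstacle is pinpointing where the hypothesis $p \mid |G|$ actually enters: it is used solely through the vanishing of $|G|$ in $\fld$, which is what makes the ``shift a point by an invariant'' manoeuvre land back inside $\Pp_R$. The counterexample exhibited in the proof of Proposition~\ref{gen_by_pts} (the polynomial ring $\fld[T]$ with trivial $G$-action in the case $p \nmid |G|$) confirms that this ingredient is genuinely indispensable, so no weaker hypothesis would suffice.
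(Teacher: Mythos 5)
Your proposal is correct and matches the paper exactly: the paper states this corollary as an immediate consequence of Proposition~\ref{gen_by_pts} (introducing it with ``Thus we have''), and your unpacking of the underlying argument --- shifting a point by an invariant using $|G|=0$ in $\fld$, then the telescoping decomposition $a=\bigl(a-\lambda(\tr_G(a)-1)\bigr)+\lambda(\tr_G(a)-1)$ --- is precisely the proof given there. Nothing further is needed.
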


For an arbitrary category $\C$ an object $u\in\C$ is called \emph{weakly initial},
if for every object $c\in\C$ the set $\C(u,c):=\Mor_\C(u,c)$ is not empty, i.e.
if for every object in $\C$ there is at least one morphism from $u$ to that object.
(If moreover $|\C(u,c)|=1$ for every $c\in\C$, then $u$ is called an \emph{initial object}
and is uniquely determined up to isomorphism.)
For $a,b\in\C$ one defines $a\prec b$ to mean that there is a monomorphism
$a\hookrightarrow b\in\C$ and $a\thickapprox b$ if $a\prec b$ and $b\prec a$.
According to this definition, an object $b\in\C$ is called \emph{minimal} if
$a\prec b$ for $a\in\C$ implies $b\prec a$ and therefore $a\thickapprox b$.
Clearly ``$\thickapprox$" is an equivalence relation on the object class of $\C$.

\begin{df}\label{universal_df}
Let $B\in \ts_G$, then
\begin{enumerate}
\item $B$ is called {\bf universal}, if it is a weakly initial object in $\ts_G$.
\item $B$ is called {\bf basic} if it is universal and minimal.
\item $B$ is called {\bf cyclic} if it is generated by the $G$-orbit
of one point, or equivalently, if $B\cong D_{reg}/I$ for some $G$-stable ideal
$I\unlhd D_{reg}$.
\item $B$ is called {\bf standard}, if it is a \emph{retract} of $D_{reg}$, or
in other words, if $D_{reg}=B\oplus J$, where $J\unlhd D_{reg}$ is some $G$-stable ideal.
\end{enumerate}
\end{df}

Let $A\in\ts$ and $a\in A$ be a point, i.e. $\tr(a)=1$.  Then the map $X_g\mapsto (a)g$ for $g\in G$ extends to a $k$-algebra homomorphism ${\rm Sym}(V_{reg}^*)\to A$ with $x\mapsto 1$, where
$x=\tr_G(X_1)$. Hence it defines a unique morphism $\phi:\ D_{reg}\to A$, mapping $x_g\mapsto (a)g$. In other words
$D_{reg}$ has a ``free point" $x_e$, which can be mapped to any point $a\in A\in \ts$
to define a morphism $\phi\in \ts(D_{reg},A)$. Moreover, if
$\beta:\ \frak S\to D_{reg}$ is a morphism in $\ts$,
then $\phi\circ\beta\in\ts(\frak S,A)$, so
$\frak S$ is weakly initial. On the other hand, if $\frak W\in \ts$ is weakly initial,
then there is a morphism $\alpha:\ \frak W\to D_{reg}\in \ts$, hence

\begin{prp}\label{universal_objects}
The universal objects in $\ts$ are precisely the trace-surjective
$k-G$ algebras which map to $D_{reg}$.
\end{prp}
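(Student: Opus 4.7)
The proof is essentially already assembled in the paragraph preceding the proposition; what remains is to organize the two implications cleanly. The plan is to verify that $D_{reg}$ itself is universal, and then to show that admitting a morphism to $D_{reg}$ is equivalent to being weakly initial in $\ts_G$.

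First I would recall the construction of the canonical morphism. Given any $A\in\ts_G$ and any point $a\in\Pp_A$ (which exists by Corollary \ref{ts_gen_by_pts}, or directly from the definition of trace-surjectivity), the $\fld$-linear map $X_g\mapsto (a)g$ defined on the regular representation $V_{reg}^*=\oplus_{g\in G}\fld X_g$ is $G$-equivariant and extends uniquely to a $G$-equivariant $\fld$-algebra homomorphism $\mathrm{Sym}(V_{reg}^*)\to A$. Since this homomorphism sends $x=\sum_g X_g = \tr_G(X_e)$ to $\tr_G(a)=1$, it factors through the dehomogenization $D_{reg}=(\mathrm{Sym}(V_{reg}^*)[1/x])_0\cong \mathrm{Sym}(V_{reg}^*)/(x-1)$, producing a morphism $\phi_a:D_{reg}\to A$ in $\ts_G$ sending the distinguished point $x_e\in D_{reg}$ to $a$. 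In particular $\ts_G(D_{reg},A)\ne\emptyset$ for every $A\in\ts_G$, so $D_{reg}$ is universal.

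Next I would prove the ``if'' direction of the proposition: suppose $\Ss\in\ts_G$ admits a morphism $\beta\in\ts_G(\Ss,D_{reg})$. For an arbitrary object $A\in\ts_G$, pick any point $a\in\Pp_A$ and form $\phi_a\circ\beta\in\ts_G(\Ss,A)$; this shows $\ts_G(\Ss,A)\ne\emptyset$, so $\Ss$ is weakly initial, i.e.\ universal. For the converse, suppose $\frak W\in\ts_G$ is universal. Since $D_{reg}$ is itself an object of $\ts_G$ (being a polynomial ring of Krull-dimension $|G|-1$ on which $G$ acts with the element $x_e$ of trace one), the very definition of ``weakly initial'' guarantees a morphism $\frak W\to D_{reg}$ in $\ts_G$. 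Combining the two directions gives the desired characterization.

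There is no real obstacle: the only mildly nontrivial point is verifying that the assignment $X_g\mapsto (a)g$ really defines a $G$-equivariant algebra homomorphism out of $\mathrm{Sym}(V_{reg}^*)$ and descends to $D_{reg}$, which follows immediately from the universal property of the symmetric algebra together with $\tr_G(a)=1$. The entire statement is then a formal consequence of $D_{reg}$ possessing a ``free point,'' exactly parallel to how a free module on one generator detects every module with a distinguished element.
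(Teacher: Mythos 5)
Your proposal is correct and follows essentially the same route as the paper: the paper's own proof is exactly the paragraph preceding the proposition, constructing the morphism $\phi:D_{reg}\to A$ from the ``free point'' $x_e$, composing with a given $\beta:\Ss\to D_{reg}$ to get weak initiality, and using that $D_{reg}$ is itself an object of $\ts$ for the converse. No discrepancies to report.
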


Now let $\C$ be the category $\ts$; the following Lemma characterizes
types of morphisms by their action on points. We have:

\begin{lemma}\label{surj_surjects_points}
For $\theta\in \ts(R,S)$ let $\theta_\Pp$ denote the induced map from the set of points of $R$
to the set of points of $S$.
\begin{enumerate}
\item If $\theta$ is surjective (injective, bijective), then so is $\theta_\Pp$.
\item If $S$ is generated by points and $\theta_\Pp$ is surjective, then so is $\theta$.
\item If $p$ divides $|G|$, $\theta$ is surjective (injective, bijective) if and only if $\theta_\Pp$ is.
In particular $\theta$ is a monomorphism if and only if $\theta$ is injective.
\end{enumerate}
\end{lemma}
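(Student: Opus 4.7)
The plan is to dispatch parts (1) and (2) directly from the definitions, and then leverage Corollary \ref{ts_gen_by_pts} together with a small algebraic trick to get the nontrivial direction of (3).

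For part (1), injectivity of $\theta_\Pp$ is inherited from injectivity of $\theta$ since $\Pp_R \subseteq R$. For surjectivity, given a point $s \in \Pp_S$, I first take \emph{any} lift $r_0 \in R$ with $\theta(r_0) = s$; the element $u := \tr_G(r_0) \in R^G$ satisfies $\theta(u) = \tr_G(s) = 1$, so $u - 1 \in \ker\theta \cap R^G$. Using that $R \in \ts_G$ provides a point $a \in \Pp_R$, I correct the lift by setting $r := r_0 - (u-1)\cdot a$. Then $\theta(r) = s$ because $u - 1 \in \ker\theta$, and $\tr_G(r) = u - (u-1)\tr_G(a) = u - (u-1) = 1$, so $r \in \Pp_R$. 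The bijective case is the conjunction.

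Part (2) is immediate: if $S$ is generated (as a $\fld$-algebra) by $\Pp_S$ and every point of $S$ is in the image of $\theta$, then the subalgebra $\theta(R)\le S$ contains a generating set, hence equals $S$.

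For part (3), the hypothesis $p \mid |G|$ lets me invoke Corollary \ref{ts_gen_by_pts} to conclude that both $R$ and $S$ are generated by their points, so (1) and (2) together give the surjective equivalence, and (1) gives the easy direction of the injective equivalence. For the converse of the injective statement, suppose $\theta_\Pp$ is injective and $\theta(r_1) = \theta(r_2)$. Fix a point $\lambda \in \Pp_R$ and define $r_i' := r_i - \lambda(\tr_G(r_i) - 1)$; a direct trace computation shows $r_i' \in \Pp_R$ and $\theta(r_1') = \theta(r_2')$, so by hypothesis $r_1' = r_2'$, which rearranges to
\[
r_1 - r_2 \;=\; \lambda \cdot c, \qquad c := \tr_G(r_1) - \tr_G(r_2) \in R^G.
\]
The key observation — and the main (though small) obstacle — is that under the assumption $p \mid |G|$, the element $\lambda + 1$ is again a point, since $\tr_G(\lambda + 1) = \tr_G(\lambda) + |G|\cdot 1 = 1$. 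Repeating the argument with $\lambda$ replaced by $\lambda + 1$ yields $r_1 - r_2 = (\lambda + 1)c$, and subtracting the two identities gives $c = 0$, hence $r_1 = r_2$. The bijective statement, and in particular the equivalence of ``monomorphism'' and ``injective'', follows by combining the surjective and injective equivalences.
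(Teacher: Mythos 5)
Your arguments for (1) and (2) are correct and essentially identical to the paper's: your corrected lift $r := r_0-(u-1)a$ is exactly the paper's $v := r - r'w$. For the injectivity direction of (3) you take a mildly different route: the paper first proves that $\theta$ is injective on $R^G$ (by translating invariants by a point) and then deduces the equality of the invariant parts from a trace computation carried out in $S$, whereas you exploit that $\lambda+1$ is a second point when $p\mid |G|$ and subtract the two identities $r_1-r_2=\lambda c=(\lambda+1)c$ to get $c=0$ directly. That is a valid and slightly more economical argument, and the verification that $\tr_G(\lambda+1)=1$ is exactly where the hypothesis $p\mid|G|$ enters.

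There is, however, a genuine gap at the very end: the claim that a \emph{monomorphism} is injective does not ``follow by combining the surjective and injective equivalences''. Those equivalences relate $\theta$ to $\theta_\Pp$; neither says anything about categorical monomorphisms. Nor is this automatic from general nonsense: in $\ts_G$ one cannot test monomorphisms against the free algebra $\fld[x]$ on one generator (it is not trace-surjective when $p\mid|G|$), so the standard argument that monomorphisms of $\fld$-algebras are injective does not apply verbatim --- note that the dual statement fails in $\ts$, where epimorphisms need not be surjective. The missing ingredient is the ``free point'' property of $D_{reg}$: for any point $a$ of any $A\in\ts$ there is a morphism $\psi\in\ts(D_{reg},A)$ determined by $x_e\mapsto a$. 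So if $\theta$ is a monomorphism and $a_1,a_2\in\Pp_R$ satisfy $\theta(a_1)=\theta(a_2)$, the two morphisms $\psi_1,\psi_2\in\ts(D_{reg},R)$ with $x_e\mapsto a_i$ satisfy $\theta\circ\psi_1=\theta\circ\psi_2$, hence $\psi_1=\psi_2$ and $a_1=a_2$. This shows $\theta_\Pp$ is injective, and only then does your (correct) equivalence ``$\theta_\Pp$ injective $\iff$ $\theta$ injective'' finish the job. You need to add this step.
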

\begin{proof} (1)+(2):\ Assume $\theta$ is surjective. Let $s\in S$ with $\tr(s)=1$ and $r\in R$ with $\theta(r)=s$. Then $r':=\tr(r)-1\in \ker(\theta)\cap R^G$. Let $w\in R$ with $\tr(w)=1$, then $r'=\tr(r'w)$ and $v:=r-r'w$
satisfies $\theta(v)=s$ and $\tr(v)=1$, hence $\theta_\Pp$ is surjective. If $S$ is generated by points, the reverse conclusion follows.\\
(3):\ Since $p$ divides $|G|$, $S$ is generated by points, hence the claim about surjectivity follows from (1) and (2).
Now we can assume that $\theta_\Pp$ is injective and show that $\theta$ is injective. Let $w\in R$ be a point and $r,r'\in R^G$ with $\theta(r)=\theta(r'),$
then $\tr(r+w)=\tr(w)=1=\tr(r'+w)$ and $\theta(r+w)=\theta(r'+w)$, so $r+w=r'+w$ and
$r=r'$. Hence the induced map on the rings of invariants is injective.
Now let $c_i\in R$ be arbitrary with $\theta(c_1)=\theta(c_2)$. Choose $\lambda\in \Pp_R$,
then the proof of Proposition \ref{gen_by_pts} shows that
$c_i=p_i+\lambda\cdot b_i$ with $p_i\in\Pp_R$ and $b_i\in R^G$ for $i=1,2$.
Hence $w:=\theta(p_1-p_2)=\theta(\lambda)\cdot (\theta(b_2)-\theta(b_1))$ and
$$\tr(w)=0=\tr(\theta(\lambda))\cdot (\theta(b_2)-\theta(b_1))=1\cdot
(\theta(b_2)-\theta(b_1)).$$
It follows that $b_1=b_2$, $\theta(p_1)=\theta(p_2)$ hence $p_1=p_2$ and $c_1=c_2$.
For the last claim in (3), it is clear that an injective morphism is a monomorphism, so assume now
that $\theta$ is a monomorphism. It suffices to show that $\theta$ is injective on the points of $R$, so
let $a_1, a_2\in R$ be points with $\theta(a_1)=\theta(a_2)$. Define $\psi_i:\ D_{reg}\to R$ as the morphisms
determined by the map $D_{reg}\ni x_e\mapsto a_i$, then $\theta\circ\psi_1=\theta\circ\psi_2$, hence
$\psi_1=\psi_2$ and $a_1=a_2$. This finishes the proof.
\end{proof}

In an arbitrary category $\C$ an object $x$ is called ``projective" if the covariant
representation functor $\C(x,?):=\Mor_\C(x,?)$ transforms epimorphisms into
surjective maps. If $\C$ is the module category of a ring, then a morphism
is an epimorphism if and only if it is surjective, therefore a module $M$
can be defined to be projective, if $\Mor_\C(M,?)$ turns surjective morphisms to
surjective maps. In the category $\ts$, however, there are
non-surjective epimorphisms. This is the reason for the following
\begin{df}\label{s-projective}
Let $\C$ be a category of sets. We call $p\in\C$ an {\bf s-projective}
object if the covariant representation
functor $\C(p,*)$ transforms {\bf surjective morphisms} into surjective maps.
Similarly we call $i\in\C$ an {\bf i-injective} object if the contravariant representation
functor $\C(*,i)$ transforms {\bf injective morphism} into surjective maps.
\end{df}

\begin{lemma}\label{D_is_s_proj}
The algebra $D_{reg}\in\ts$ is s-projective.
\end{lemma}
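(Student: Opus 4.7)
The plan is to exploit the ``free point'' universal property of $D_{reg}$ that was established in the discussion preceding Proposition~\ref{universal_objects}: any morphism $D_{reg}\to A$ in $\ts$ is uniquely determined by, and can be prescribed by, the image of the single point $x_e\in D_{reg}$, since $x=\tr_G(x_e)=1$ in $D_{reg}$ and the $G$-orbit $\{x_g=(x_e)g\}_{g\in G}$ generates $D_{reg}$ as a $k$-algebra (with the only relation being $\sum_g x_g=1$).

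Concretely, let $\theta\colon R\twoheadrightarrow S$ be a surjective morphism in $\ts$, and suppose $\psi\in\ts(D_{reg},S)$. First I would observe that $\psi$ is completely determined by $s:=\psi(x_e)\in S$, and that $s$ is necessarily a point of $S$, because applying $\psi$ to the identity $\tr_G(x_e)=1$ in $D_{reg}$ gives $\tr_G(s)=1$. Next, I invoke Lemma~\ref{surj_surjects_points}(1): since $\theta$ is surjective, the induced map $\theta_\Pp\colon \Pp_R\to \Pp_S$ on points is surjective. Hence there exists $r\in\Pp_R$ with $\theta(r)=s$.

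Using the universal property above, the assignment $x_e\mapsto r$ extends (uniquely) to a morphism $\widetilde{\psi}\in\ts(D_{reg},R)$, sending $x_g\mapsto (r)g$ for all $g\in G$. It remains to check that $\theta\circ\widetilde{\psi}=\psi$; but both sides are morphisms in $\ts(D_{reg},S)$ which send $x_e$ to the common value $s$, so the uniqueness clause of the universal property forces them to agree. This exhibits the lift $\widetilde{\psi}$ and therefore shows that $\ts(D_{reg},\theta)\colon \ts(D_{reg},R)\to\ts(D_{reg},S)$ is surjective, which is precisely the s-projectivity of $D_{reg}$.

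There is no real obstacle here: the whole argument is essentially a translation of ``lifting a point along a surjection'' into ``lifting a morphism from $D_{reg}$'', so the only thing that needs to be cited carefully is the point-surjectivity from Lemma~\ref{surj_surjects_points}. In particular one should note that the argument does not require $p\mid |G|$, since statement~(1) of that lemma is unconditional.
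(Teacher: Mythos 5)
Your proof is correct and is essentially the paper's own argument: lift the point $\psi(x_e)$ along $\theta$ using Lemma~\ref{surj_surjects_points}(1), then use the free-point universal property of $D_{reg}$ to extend the lifted point to the required morphism and to verify the triangle commutes. The extra remarks (that $s$ is automatically a point, and that no hypothesis $p\mid|G|$ is needed) are accurate but not new to the argument.
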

\begin{proof}
Let $\theta\in \ts(A,B)$ be surjective and $\phi\in\ts(D_{reg},B)$.
Then by \ref{surj_surjects_points} $\phi(x_e)=\theta(\gamma)$
for a point $\gamma\in \Pp_A$. The map $x_e\mapsto\gamma$ extends
to a morphism $\psi\in\ts(D_{reg},A)$ with $\theta\circ\psi=\phi$.
\end{proof}

Let $\C$ be an arbitrary category. Then an object $m \in \C$
is called a \emph{generator} in $\C$, if the covariant morphism - functor
${\rm Mor}_\C(m,*)$ is injective on morphism sets. In other words, $m$ is a
generator if for any two objects $x,y \in \C$ and morphisms
$f_1,\ f_2 \in \C(x,y)$, $f_1 \ne f_2$ implies $(f_1 )_* \ne (f_2)_*$, i.e.
there is $f \in \C(m,x)$ with $f_1\circ f \ne f_2 \circ f$. It follows that
$\C(m,x)\ne\emptyset$ whenever $x\in\C$ has nontrivial automorphisms. So if every
object $x\in\C$ has a nontrivial automorphism, then generators in $\C$ are weakly initial objects.
If $\C=\ts$, then right multiplication with any $1\ne z\in Z(G)$ is a nontrivial automorphism for every object, hence if $Z(G)\ne 1$, then every generator in $\ts$ is universal.

Note that the category $\fld{\tt alg}$ of commutative $\fld$-algebras is not abelian, but it has
finite products and coproducts given by the cartesian product
$\prod_{i=1}^\ell A_i=\times_{i=1}^\ell A_i$ and the
tensor product
$\coprod_{i=1}^\ell A_i=A_1\otimes_\fld A_2\otimes_\fld\cdots\otimes_\fld A_\ell$ for $A_i\in\fld{\tt alg}$.
(In the following we will write $\otimes$ instead of $\otimes_\fld$.)
These also form products and co-products in the subcategory $\ts_G$, if all
$A_i\in\ts$. For an object $A\in\ts$ and $\ell\in\mathbb{N}$
we define
$$A^\ell:=\prod_{i=1}^\ell A:= A\times A\times\cdots\times A\ {\rm and}\
A^{\otimes\ell}:=\coprod_{i=1}^\ell A:= A\otimes A\otimes\cdots\otimes A$$
with $\ell$ copies of $A$ involved. This allows for the following partial characterization
of \emph{generators} in $\ts$:

\begin{lemma}\label{generators in Ts}
An object $\Gamma\in\ts$ is a generator if for every $R\in\ts$ there is a surjective
morphism $\Psi:\ \Gamma^{\otimes\ell}\to R$ for some $\ell\ge 1$.
\end{lemma}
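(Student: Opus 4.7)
The plan is to unwind the definition of generator in $\ts_G$ and combine the universal property of coproducts with the fact that surjective algebra homomorphisms are epimorphisms. Concretely, for $\Gamma\in\ts$ to be a generator I must show that whenever $f_1,f_2\in\ts(R,S)$ satisfy $f_1\ne f_2$, there exists some $f\in\ts(\Gamma,R)$ with $f_1\circ f\ne f_2\circ f$; I will argue the contrapositive, supposing $f_1\circ f=f_2\circ f$ for every $f\in\ts(\Gamma,R)$ and deducing $f_1=f_2$.

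By the hypothesis on $\Gamma$, there exists an $\ell\ge 1$ together with a surjective morphism $\Psi\colon\Gamma^{\otimes\ell}\to R$ in $\ts_G$. Since $\otimes_\fld$ is the coproduct in $\fld{\tt alg}$ and this structure restricts to $\ts_G$ (as already noted in the paragraph introducing $\Gamma^{\otimes\ell}$), $\Psi$ corresponds, under the universal property, to the $\ell$-tuple $(\psi_1,\ldots,\psi_\ell)$ where $\psi_i\colon\Gamma\to R$ is the composition of $\Psi$ with the canonical inclusion of the $i$-th tensor factor; each $\psi_i$ lies in $\ts(\Gamma,R)$.

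Applying the standing assumption to each $\psi_i$ gives $f_1\circ\psi_i=f_2\circ\psi_i$ for $i=1,\ldots,\ell$. By the uniqueness clause of the coproduct universal property, the two morphisms $f_1\circ\Psi$ and $f_2\circ\Psi$ from $\Gamma^{\otimes\ell}$ to $S$ are determined by and must agree with the coinciding $\ell$-tuples $(f_j\circ\psi_1,\ldots,f_j\circ\psi_\ell)$, whence $f_1\circ\Psi=f_2\circ\Psi$.

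Finally, a surjective homomorphism of commutative $\fld$-algebras is right-cancellable on the level of underlying sets, hence an epimorphism in $\fld{\tt alg}$, and consequently also in the full subcategory $\ts_G$. Cancelling $\Psi$ from $f_1\circ\Psi=f_2\circ\Psi$ yields $f_1=f_2$, which is the required contradiction. The only point that could cause trouble is a subtle one about epimorphisms in $\ts_G$ versus $\fld{\tt alg}$, but because $\ts_G$ is a full subcategory of $\fld{\tt alg}$ and the epi-property is purely a set-theoretic right-cancellation on hom-sets, this causes no real obstacle; thus no substantive difficulty arises and the proof is essentially a direct diagram chase.
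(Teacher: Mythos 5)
Your proof is correct and follows essentially the same route as the paper's: compose the surjection $\Psi$ with the canonical inclusions of the tensor factors to get morphisms $\psi_i\in\ts(\Gamma,R)$, use the coproduct property to conclude $f_1\circ\Psi=f_2\circ\Psi$, and cancel the surjection $\Psi$. The extra remarks on epimorphisms in the full subcategory are harmless and just make explicit what the paper leaves implicit.
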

\begin{proof} By assumption we have the following commutative diagram
\begin{diagram}\label{Delta_diagram1}
\Gamma^{\otimes\ell} &\rOnto^\Psi          &R \\
 \uTo^{\tau_i}     &\ruTo_{\Psi_i}       & \\
\Gamma             &                     & \\
\end{diagram}
where $\Psi$ is surjective. Let $\alpha,\beta\in\ts(R,S)$ with $\alpha\circ\psi=\beta\circ\psi$ for
all $\psi\in\ts(\Gamma,R)$. Then $\alpha\circ\Psi\circ\tau_i=\beta\circ\Psi\circ\tau_i$ for
all $i$, hence $\alpha\circ\Psi=\beta\circ\Psi$. Since $\Psi$ is surjective
it follows that $\alpha=\beta$, so $\Gamma$ is a generator in $\ts$.
\end{proof}

Assume that $p$ divides $|G|$, then any $A\in\ts$ is generated by finitely many points, say, $a_1,\cdots,a_\ell$. Hence
there is a surjective morphism $D_{reg}^{\otimes\ell}\to A$, mapping
$t_i\mapsto a_i$, where
$t_i:=1\otimes\cdots 1\otimes x_e\otimes 1\cdots\otimes 1$ with $x_e$ in the $i^{th}$ tensor factor.
Hence

\begin{lemma}\label{D_reg_sproj_gen}
If $p$ divides $|G|$, then $D_{reg}$ is an s-projective s-generator in $\ts$.
\end{lemma}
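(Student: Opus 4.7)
The plan is to establish the two claims separately. For s-projectivity there is nothing new to do: this is precisely Lemma \ref{D_is_s_proj}. So the real work lies in verifying the s-generator property, for which I would invoke the sufficient condition from Lemma \ref{generators in Ts}: it is enough to produce, for every $A \in \ts_G$, a surjective morphism $\Psi:\ D_{reg}^{\otimes \ell} \twoheadrightarrow A$ from some tensor power of $D_{reg}$.

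First I would use the hypothesis $p \mid |G|$ together with Corollary \ref{ts_gen_by_pts} to conclude that every $A \in \ts_G$ is generated as a $\fld$-algebra by its set of points $\Pp_A$. Since the objects of $\ts_G$ are by definition finitely generated $\fld$-algebras, I can select finitely many points $a_1,\ldots,a_\ell \in \Pp_A$ that already generate $A$; this is the only place where the assumption $p \mid |G|$ enters.

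Next, I would use the ``free point'' property of $D_{reg}$ recorded in the paragraph preceding Proposition \ref{universal_objects}: because $x_e \in D_{reg}$ is a point and $D_{reg}$ is freely generated by the $G$-orbit of $x_e$, each assignment $x_e \mapsto a_i$ extends uniquely to a $G$-equivariant algebra homomorphism $\phi_i : D_{reg} \to A$ in $\ts_G$. Tensor product is the coproduct in $\fld{\tt alg}$ and restricts to the coproduct in $\ts_G$ whenever all factors belong to $\ts_G$, so the family $(\phi_i)_{i=1}^\ell$ assembles into a single morphism
\[
\Psi : D_{reg}^{\otimes \ell} \longrightarrow A, \qquad t_i := 1 \otimes \cdots \otimes x_e \otimes \cdots \otimes 1 \;\longmapsto\; a_i,
\]
with $x_e$ in the $i$-th tensor factor. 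The image of $\Psi$ contains $a_1,\ldots,a_\ell$, which generate $A$, so $\Psi$ is surjective. Applying Lemma \ref{generators in Ts} shows that $D_{reg}$ is a generator in $\ts_G$, and combined with Lemma \ref{D_is_s_proj} this gives the desired s-projective s-generator property.

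There is no real obstacle: the argument is essentially a bookkeeping step that stitches together three earlier facts, namely (i) points in $A$ correspond to morphisms $D_{reg} \to A$, (ii) $\otimes$ furnishes coproducts in $\ts_G$, so finitely many morphisms $D_{reg} \to A$ combine into a single morphism from a finite tensor power, and (iii) trace-surjective algebras over a group of order divisible by $p$ are generated by their points. The only subtle point worth checking carefully is that the coproduct in $\fld{\tt alg}$ really does remain a coproduct inside the subcategory $\ts_G$, which holds because an iterated tensor product of trace-surjective algebras is trace-surjective (the product of points is a point), so that $\Psi$ genuinely lives in $\ts_G$.
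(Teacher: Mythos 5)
Your proposal is correct and follows the paper's own argument essentially verbatim: the paper also obtains the generator property by choosing finitely many points generating $A$ (via Corollary \ref{ts_gen_by_pts}), assembling the resulting morphisms $D_{reg}\to A$ into a surjection $D_{reg}^{\otimes\ell}\to A$ via the coproduct, and invoking Lemma \ref{generators in Ts}, with s-projectivity supplied by Lemma \ref{D_is_s_proj}. Nothing is missing.
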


The remaining results in this section are straightforward generalizations
of corresponding results in the case where $G$ is a $p$-group (see \cite{catgal}).

\begin{lemma}\label{proj_gen_univ}
Let $p$ be a divisor of $|G|$ and $A\in\ts$. Then the following hold:
\begin{enumerate}
\item The algebra $A$ is s-projective if and only if $A$ is a retract of some tensor power $D_{reg}^{\otimes N}$.
\item If $A$ is s-projective or an $s$-generator, then $A$ is universal.
\end{enumerate}
\end{lemma}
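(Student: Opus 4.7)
The plan is to leverage Lemma~\ref{D_is_s_proj}, which says $D_{reg}$ is s-projective, and Lemma~\ref{D_reg_sproj_gen}, which says $D_{reg}$ is also an s-generator, together with the universal property of coproducts (tensor products) in $\fld{\tt alg}$.

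First I would establish the preliminary fact that a tensor product of s-projective objects in $\ts$ is again s-projective: given a surjection $\theta:B\twoheadrightarrow C$ and a morphism $\phi:X_1\otimes\cdots\otimes X_N\to C$, the universal property of the coproduct identifies $\phi$ with the tuple $(\phi\circ\tau_i)_i$ of morphisms $X_i\to C$; lifting each $\phi\circ\tau_i$ through $\theta$ using s-projectivity of $X_i$ and reassembling yields a morphism $X_1\otimes\cdots\otimes X_N\to B$ lifting $\phi$. In particular, $D_{reg}^{\otimes N}$ is s-projective for every $N\ge 1$.

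For part (1), the ``$\Leftarrow$'' direction then follows: if $A$ sits as a retract via $\iota:A\to D_{reg}^{\otimes N}$ and $\pi:D_{reg}^{\otimes N}\twoheadrightarrow A$ with $\pi\circ\iota=\id_A$, and we are given $\phi:A\to C$ together with a surjection $\theta:B\twoheadrightarrow C$, then s-projectivity of $D_{reg}^{\otimes N}$ yields $\tilde\psi:D_{reg}^{\otimes N}\to B$ with $\theta\circ\tilde\psi=\phi\circ\pi$, and $\tilde\psi\circ\iota$ is the desired lift of $\phi$. Conversely, if $A$ is s-projective then by Lemma~\ref{D_reg_sproj_gen} there exists a surjection $\pi:D_{reg}^{\otimes N}\twoheadrightarrow A$ for some $N$, and applying s-projectivity of $A$ to the pair $(\id_A,\pi)$ produces a section $\sigma:A\to D_{reg}^{\otimes N}$, realizing $A$ as a retract.

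For part (2), if $A$ is s-projective, I would use (1) to write $A$ as a retract of $D_{reg}^{\otimes N}$ and compose the section with the multiplication morphism $\mu:D_{reg}^{\otimes N}\to D_{reg}$ (which is a morphism in $\ts$ because $D_{reg}$ is commutative and $G$ acts diagonally by algebra automorphisms), obtaining a $\ts$-morphism $A\to D_{reg}$; Proposition~\ref{universal_objects} then declares $A$ universal. If instead $A$ is an s-generator, then by Lemma~\ref{generators in Ts} every $R\in\ts$ admits a surjection $A^{\otimes\ell}\twoheadrightarrow R$, and precomposing with the canonical inclusion $\tau_1:A\to A^{\otimes\ell}$ yields a morphism $A\to R$, directly verifying weak initiality. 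The main conceptual step is recognizing the preliminary observation that tensor products of s-projectives remain s-projective, which is the correct non-abelian analogue of ``direct sums of projectives are projective''; once that is in place, both parts follow by routine diagram chases using the universal properties already established.
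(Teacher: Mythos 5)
Your proposal is correct and follows essentially the same route as the paper: the surjection $D_{reg}^{\otimes N}\twoheadrightarrow A$ from Lemma~\ref{D_reg_sproj_gen} splits by s-projectivity to give the retract in (1), and universality in (2) is inherited from the universal object $D_{reg}^{\otimes N}$ (resp.\ read off directly from the s-generator property). You in fact do a little more than the paper's proof, which only records the ``only if'' direction of (1); your preliminary observation that tensor products (and retracts) of s-projectives are s-projective is exactly the implicit ingredient needed for the converse, and your detour through the multiplication map and Proposition~\ref{universal_objects} in (2) is an equivalent way of saying that a retract of a weakly initial object is weakly initial.
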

\begin{proof}
(1):\  Assume that $A$ is s-projective. Then, by \ref{D_reg_sproj_gen} there is a surjective
morphism $D_{reg}^{\otimes N}\to A$, which splits since $A$ is s-projective. It follows that
$A$ is a retract of $D_{reg}^{\otimes N}$.\\
(2):\  Clearly $A$ is universal if it is an s-generator. If $A$ is s-projective, it is
universal, as a retract of the universal algebra $D_{reg}^{\otimes N}$.
\end{proof}

\begin{lemma}\label{basics_exist}
Let $X\in\ts$ be a subalgebra of $D_{reg}$ and let
$\hat X$ denote its normal closure in ${\rm Quot}(X)$. Then
$\hat X$ is universal in $\ts$. Moreover if $X$ is a subalgebra of minimal
Krull-dimension in $D_{reg}$, then $X$ and $\hat X$ are basic domains.
\end{lemma}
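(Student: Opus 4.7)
The plan is to split the statement into two independent pieces: (a) $\hat X \in \ts_G$ together with universality of $\hat X$, and (b) basicness of $X$ and $\hat X$ under the minimal-dimension hypothesis. Throughout, $X$ is a domain (as a subring of the polynomial ring $D_{reg}$), and $\hat X$ is a domain since it lies inside ${\rm Quot}(X)$.

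First I would verify that $\hat X \subseteq D_{reg}$ and that $\hat X \in \ts_G$. Since $D_{reg}$ is a polynomial ring, it is a normal domain, and $X \le D_{reg}$ gives ${\rm Quot}(X) \subseteq {\rm Quot}(D_{reg})$. Any element of $\hat X$ is integral over $X$, hence over $D_{reg}$, and therefore lies in $D_{reg}$. The $G$-action stabilises $X$, hence ${\rm Quot}(X)$, and preserves integrality (apply $g$ to the monic relation), so $\hat X$ is $G$-stable. Trace-surjectivity is inherited because any point $x\in X$ of trace $1$ remains a point in $\hat X$ (Theorem \ref{main}). Finally, $\hat X$ is finitely generated as a $\fld$-algebra by the classical theorem on finiteness of the integral closure for a finitely generated $\fld$-algebra domain. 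Thus $\hat X \in \ts_G$, and because the inclusion $\hat X \hookrightarrow D_{reg}$ is a morphism in $\ts_G$, Proposition \ref{universal_objects} gives that $\hat X$ (and similarly $X$) is universal in $\ts_G$.

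For basicness, assume that $X$ has minimal Krull-dimension among subalgebras of $D_{reg}$ lying in $\ts_G$. Since $X \le \hat X$ is integral, $\dim \hat X = \dim X$, so $\hat X$ inherits the minimality property. Fix $B \in \{X,\hat X\}$ and suppose $Y \prec B$ via a monomorphism $\iota\colon Y \hookrightarrow B$ in $\ts_G$; by Lemma \ref{surj_surjects_points}(3) the map $\iota$ is injective. By universality of $B$ there exists $\phi \in \ts_G(B,Y)$. If $\phi$ failed to be injective, then $\ker\phi$ would be a nonzero $G$-stable (prime) ideal in the domain $B$, and by Theorem \ref{main}(3) the image $\phi(B) \cong B/\ker\phi$ would lie in $\ts_G$. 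Composing embeddings we obtain
\[
B/\ker\phi \;\hookrightarrow\; Y \;\stackrel{\iota}{\hookrightarrow}\; B \;\hookrightarrow\; D_{reg},
\]
so $B/\ker\phi$ is isomorphic to a subalgebra of $D_{reg}$ belonging to $\ts_G$. Standard dimension theory for finitely generated $\fld$-algebra domains gives $\dim(B/\ker\phi)<\dim B=\dim X$, contradicting the minimality of $\dim X$. Therefore $\phi$ is injective, hence a monomorphism, so $B \prec Y$. This proves $B$ is minimal and, combined with the first part, basic.

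The main delicacies are in the first step: the fact that $\hat X$ sits inside $D_{reg}$ (hinging on normality of the polynomial ring) and that integral closure preserves finite generation over $\fld$. Once those are in hand, the minimality argument is essentially a dimension-drop: universality manufactures a morphism $B\to Y$ in the required direction, and the minimal-dimension hypothesis rules out any nontrivial kernel.
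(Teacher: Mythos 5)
Your proof is correct and takes essentially the same route as the paper: universality comes from the embedding into $D_{reg}$ (with normality of $D_{reg}$ and finiteness of the integral closure placing $\hat X$ inside $D_{reg}$), and minimality comes from the same dimension-drop argument, namely that the morphism $B\to Y$ supplied by universality must have zero kernel because a proper quotient of $B$ embedded in $D_{reg}$ would have strictly smaller Krull dimension. You spell out a few details the paper leaves implicit ($G$-stability and trace-surjectivity of $\hat X$, finite generation of the integral closure), but the core argument is identical.
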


\begin{proof}
The polynomial ring $D_{reg}$ is a universal domain of Krull-dimension
$|G|-1$. Let $X\hookrightarrow D_{reg}$ be an embedding in $\ts$,
then $X$ is a universal domain. Now suppose that $X$ has minimal Krull-dimension.
If $Y\prec X$, then $\Dim(Y)=\Dim(X)$, but there is $\alpha\in\ts(X,Y)$ with
$\alpha(X)\prec Y\prec X$. It follows that $\Dim(\alpha(X))=\Dim(Y)=\Dim(X)$, so
$\ker(\alpha)=0$ and $X\prec Y$. This shows that $X$ is a universal minimal, hence basic, domain.\\
Since $X$ is a finitely generated $\fld$-algebra, so is $\hat X$ and, since $D_{reg}$
is a normal ring, $\hat X\le D_{reg}$. It follows that
$\hat X$ is universal, and basic, if $X$ is.
\end{proof}

The next result describes properties of basic objects and shows
that they form a single $\thickapprox$-equivalence class consisting of integral domains,
all of which have the same Krull-dimension:

\begin{prp}\label{universal_minimal_domains}
Let $A\in\ts$ be universal. Then the following are equivalent:
\begin{enumerate}
\item $A$ is basic;
\item $A$ is a basic domain;
\item every $\alpha\in \End_\ts(A)$ is injective;
\item $A\prec B$ for every universal $B\in \ts$;
\item $A\thickapprox B$ for one (and therefore every) basic object $B\in \ts$;
\item no proper quotient of $A$ is universal;
\item no proper quotient of $A$ is a subalgebra of $A$.
\end{enumerate}
Any two basic objects are $\thickapprox$-equivalent domains of the same Krull-dimension
$d_\fld(G)\le sm$ where $|G|=p^s\cdot m$ with $\gcd(p,m)=1$.
With $\frak{B}$ we denote the $\thickapprox$-equivalence class of basic objects in $\ts$.
\end{prp}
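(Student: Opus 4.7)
The plan is to establish the seven-way equivalence via the cycle $(2)\Rightarrow(3)\Rightarrow(4)\Rightarrow(5)\Rightarrow(2)$, then glue in $(1)$ and the quotient-conditions $(6),(7)$. Throughout I would use the concrete basic domain $B_0\subseteq D_{reg}$ furnished by Lemma~\ref{basics_exist} as a pivot, and exploit the standard fact that in a Noetherian domain every proper quotient strictly lowers Krull-dimension (Krull's Hauptidealsatz).

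For $(2)\Rightarrow(3)$: if $\alpha\in\End_\ts(A)$ with $A$ a basic domain, then $\alpha(A)\prec A$, so minimality gives $A\prec\alpha(A)$ and thus $\Dim A=\Dim\alpha(A)=\Dim(A/\ker\alpha)$; being a Noetherian domain, this forces $\ker\alpha=0$. For $(3)\Rightarrow(4)$: both $A$ and any universal $B$ are weakly initial (Proposition~\ref{universal_objects}), so morphisms $\phi:A\to B$ and $\psi:B\to A$ exist, and the endomorphism $\psi\phi\in\End_\ts(A)$ is injective by $(3)$, so $\phi$ is injective, proving $A\prec B$. For $(4)\Rightarrow(5)$: applying $(4)$ with $B=B_0$ yields $A\prec B_0$; since $B_0$ is minimal this forces $B_0\prec A$, so $A\thickapprox B_0$. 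For $(5)\Rightarrow(2)$: from $A\thickapprox B_0$ the embedding $A\hookrightarrow B_0$ shows $A$ is a domain, and if $Y\prec A$ then $Y\prec B_0$, hence $B_0\prec Y$ by minimality of $B_0$, whence $A\prec B_0\prec Y$, showing that $A$ itself is minimal and therefore basic.

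The bridge $(1)\Leftrightarrow(2)$ is immediate in the direction $(2)\Rightarrow(1)$. For $(1)\Rightarrow(2)$ I would observe that $B_0$ already satisfies $(3)$ by the first step, so the composite $B_0\to A\to B_0$ of weakly-initial morphisms is an injective endomorphism of $B_0$, yielding $B_0\prec A$; minimality of $A$ then gives $A\thickapprox B_0$, and $(5)\Rightarrow(2)$ finishes the job. For the quotient-side equivalences the same composition trick recurs. $(3)\Rightarrow(6)$: a universal proper quotient $A/I$ admits, by weak initiality, a morphism $A/I\to A$ whose composition with $A\twoheadrightarrow A/I$ is a non-injective endomorphism of $A$. $(6)\Rightarrow(7)$: an embedding $A/I\hookrightarrow A$ of a proper quotient composed with $A\to D_{reg}$ makes $A/I$ universal, contradicting $(6)$. $(7)\Rightarrow(1)$: given $Y\prec A$, weak initiality provides $\alpha:A\to Y$, and the composite $A\xrightarrow{\alpha}Y\hookrightarrow A$ exhibits $A/\ker\alpha$ as a subalgebra of $A$, forcing $\ker\alpha=0$ by $(7)$ and hence $A\prec Y$.

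The closing claims follow directly: $(5)$ makes the $\thickapprox$-equivalence class $\mathfrak{B}$ well-defined, and mutual embedding forces all basic objects to share a single Krull-dimension $d_\fld(G)$. For the bound $d_\fld(G)\le sm$ I would construct a universal $G$-algebra of Krull-dimension $sm$ by tensor-inducing the polynomial $P$-algebra $U_P\cong\fld^{[s]}$ of Theorem~\ref{arb_p_grp_sec0} along a Sylow $p$-subgroup $P\le G$: the resulting $\mathrm{Ind}_P^{\otimes G}(U_P)$ is a polynomial ring of Krull-dimension $sm$ whose trace-surjectivity for $G$ can be checked on a suitable point of $U_P$. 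The main obstacle I anticipate is $(1)\Rightarrow(2)$, since the definition of basic does not refer to domain-ness; the fix is importing the domain property from $B_0$ through the composition argument above. A secondary concern is that the $sm$-bound most naturally uses the tensor-induction functor, which is developed only in Section~3.
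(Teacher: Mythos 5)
Your proposal is correct and follows essentially the same route as the paper: the same pivot object $X=B_0\subseteq D_{reg}$ from Lemma~\ref{basics_exist}, the same composition-of-weakly-initial-morphisms trick for the implications, the same dimension count for injectivity of endomorphisms, and the same construction for the bound $d_\fld(G)\le sm$. Your use of $\tensind(U_P)$ for that last bound is in fact what the paper's argument requires (it writes $\ind(U_P)$, but only tensor induction yields a universal \emph{domain} of Krull-dimension $sm$), so your "secondary concern" is resolved exactly as the paper intends via Propositions~\ref{tens_ind_ts} and~\ref{preserved_props}.
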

\begin{proof}
Let $X\in\ts$ be a basic domain and $\alpha\in \End_\ts(X)$. Then $\alpha(X)\prec X$, hence
$X\prec \alpha(X)$, so $\Dim(X)=\Dim(\alpha(X))$ and $\alpha$ must be injective.\\
``(1) $\Rightarrow$ (2)":\  There is $\beta\in \ts(X,A)$ and $\gamma\in\ts(A,X)$,
so $\gamma\circ\beta\in\End_\ts(X)$ is injective, which implies that $\beta$ is injective
and therefore $X\prec A$. It follows that $A\prec X$, hence $A$ is a domain.\\
``(2) $\Rightarrow$ (3)":\  This has already been shown above.
(We didn't use the fact that $A$ is universal, there. So every minimal domain in $\ts$
satisfies (3)).\\
 ``(3) $\Rightarrow$ (4)":\  Since $A$ and $B$ are universal there exist morphisms
$\alpha\in\ts(A,B)$ and $\beta\in\ts(B,A)$ with $\beta\circ\alpha$ injective, because $A$
is minimal. Hence $A\prec B$.\\
``(4) $\Rightarrow$ (5)":\  This is clear.\\
``(5) $\Rightarrow$ (1)":\  $B\thickapprox A$ means that
$B\hookrightarrow A$ and $A\hookrightarrow B$. In that case $A$ is universal (minimal)
if and only if $B$ is universal (minimal). Choosing $B:=X$, it follows that $A$ is basic.\\
``(3)$\Rightarrow$ (6)":\  Now assume that every $\alpha\in \End_\ts(B)$ is injective and let
$B/I$ be universal for the $G$-stable ideal $I\unlhd B$. Then there is
$\gamma\in\ts(B/I,B)$ and the composition with the canonical map $c:\ B\to B/I$ gives
$\gamma\circ c\in\End_\ts(B)$. It follows that $I=0$.\\
``(6) $\Rightarrow$ (1)":\  Assume $B\prec A$. Then $B$ is universal and since $A$ is universal, there is
$\theta\in\ts(A,B)$ with $\theta(A)\le B$ universal. Hence $A\cong\theta(A)\prec B$ and
$A$ is basic. \\
Let $A,B\in\ts$ be basic, then $\ts(A,B)\ne\emptyset\ne \ts(B,A)$
implies that $A\prec B\prec A$, hence $A\thickapprox B$ and $\Dim(A)=\Dim(B)=:d_\fld(G)\le sm$,
since $\ind(U_P)$ is universal of dimension $sm$ by
Theorem \ref{arb_p_grp_sec0}.
Assume that $d_\fld(G)=0$. Then $X$ must be a Galois-field extension $K\ge\fld$ with Galois group $G$
and $K\hookrightarrow D_{reg}$, which implies $K=\fld$ and $G=1$.\\
``(6) $\Rightarrow$ (7)":\  This is clear, because a quotient $A/I$ as subalgebra of $A$ would be universal.\\
``(7) $\Rightarrow$ (1)":\ We have $X\prec A$ and there is
$\theta\in\ts(A,X)$ with $\theta(A)\le X$ universal. It follows that $\theta(A)\prec A$, hence
$\ker\theta=0$ and $\theta(A)\cong A\thickapprox X$, so $A$ is basic.
\end{proof}

\begin{cor}\label{universal_minimal_domains_cor1}
Let $A\in\ts$ be a universal domain. Then $d_\fld(G)\le\Dim(A)$ and the following
are equivalent:
\begin{enumerate}
\item $A\in\frak{B}$;
\item $d_\fld(G)=\Dim(A)$;
\item If $C\in \ts$ with $C\prec A$, then $\Dim(C)=\Dim(A)$.
\end{enumerate}
\end{cor}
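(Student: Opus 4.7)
The initial inequality $d_\fld(G)\le\Dim(A)$ should drop out immediately: pick any basic $B\in\frak{B}$, and apply Proposition \ref{universal_minimal_domains}(4) to obtain an embedding $B\hookrightarrow A$. Since both are finitely generated $\fld$-algebra domains, Krull dimension equals transcendence degree of the quotient field, so $d_\fld(G)=\Dim(B)\le\Dim(A)$.

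For the three equivalences, my plan is to close the cycle $(1)\Rightarrow(3)\Rightarrow(2)\Rightarrow(1)$. For $(1)\Rightarrow(3)$ I would reuse the template of the previous proposition: given $A\in\frak{B}$ and an embedding $\iota\colon C\hookrightarrow A$ in $\ts$, universality of $A$ supplies a morphism $\alpha\in\ts(A,C)$, and then $\iota\circ\alpha\in\End_\ts(A)$ must be injective by Proposition \ref{universal_minimal_domains}(3). Hence $\alpha$ is injective, giving $A\prec C$; mutual embeddings of finitely generated domains then yield $\Dim(A)=\Dim(C)$. The implication $(3)\Rightarrow(2)$ is essentially one line: any $B\in\frak{B}$ satisfies $B\prec A$ by Proposition \ref{universal_minimal_domains}(4), so (3) applied to such $B$ gives $\Dim(A)=\Dim(B)=d_\fld(G)$.

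The substantive step is $(2)\Rightarrow(1)$, which I plan to settle by verifying condition (6) of Proposition \ref{universal_minimal_domains}: no proper quotient of $A$ is universal. Let $I\unlhd A$ be a non-zero $G$-stable ideal with $A/I$ universal. Applying the first part of the corollary to $A/I$ yields $d_\fld(G)\le\Dim(A/I)$. But $A$ is a finitely generated $\fld$-algebra domain, and the only technical input of the whole argument is the standard dimension formula for such rings: every non-zero prime $\mathfrak{p}\unlhd A$ satisfies $\Dim(A/\mathfrak{p})<\Dim(A)$, so the same strict inequality holds for any proper quotient $A/I$ (by taking $\mathfrak{p}$ minimal over $I$). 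This contradicts $\Dim(A)=d_\fld(G)$, forcing $I=0$, and Proposition \ref{universal_minimal_domains}(6) then delivers $A\in\frak{B}$.
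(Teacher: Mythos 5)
Your argument is correct, and it draws on the same toolbox as the paper's proof (Proposition \ref{universal_minimal_domains} together with the fact that a proper quotient of an affine domain has strictly smaller Krull dimension), but it runs the implication cycle in the opposite direction. The paper proves $(1)\Rightarrow(2)\Rightarrow(3)\Rightarrow(1)$, obtaining $(3)\Rightarrow(1)$ by contraposition: if $A$ is not minimal there is a non-injective $\alpha\in\End_\ts(A)$, and $C:=\alpha(A)\prec A$ is a proper quotient with $\Dim(C)<\Dim(A)$, contradicting (3). You instead close the loop at $(2)\Rightarrow(1)$ by verifying criterion (6) of Proposition \ref{universal_minimal_domains} (no proper quotient of $A$ is universal); that is an equally legitimate entry point into the equivalence list and makes the role of the dimension drop slightly more explicit, at the cost of needing the ``first part'' of the corollary for the quotient algebra.

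That cost is the one place you should tighten the wording: you apply the inequality $d_\fld(G)\le\Dim(A/I)$ by ``applying the first part of the corollary to $A/I$'', but the first part is stated only for universal \emph{domains}, and a universal quotient $A/I$ need not be a domain. The inequality nevertheless holds: by Proposition \ref{universal_minimal_domains}(4) a basic domain $B$ embeds into the universal algebra $A/I$, some minimal prime $\frak{q}$ of $A/I$ meets the image of $B$ trivially (a finite intersection of nonzero ideals in a domain is nonzero), so $B$ embeds into the affine domain $(A/I)/\frak{q}$ and $d_\fld(G)=\Dim(B)\le\Dim\bigl((A/I)/\frak{q}\bigr)\le\Dim(A/I)$. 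With that sentence added, the step is watertight and the contradiction with $\Dim(A/I)<\Dim(A)=d_\fld(G)$ goes through exactly as you describe.
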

\begin{proof} The first statement and ``(1) $\Rightarrow$ (2)" follow immediately
from Proposition \ref{universal_minimal_domains}.\\
``(2) $\Rightarrow$ (3)":\  $C\prec A$ implies that $C$ is a universal domain and
$\Dim(C)\le\Dim(A)$. Hence $\Dim(A)=d_\fld(G)\le\Dim(C)\le \Dim(A)$.\\
``(3) $\Rightarrow$ (1)":\  Suppose $A$ is \emph{not} minimal. Then there
is $\alpha\in \End_\ts(A)$ with $\ker(\alpha)\ne 0$. Hence
$A/\ker(\alpha)\cong\alpha(A)=:C\prec A$. Clearly $\Dim(C)<\Dim(A)$.
\end{proof}

\section{Induction, co-induction and restriction}

From now on $H\le G$ will denote a subgroup of index $m:=[G:H]$
and $\R:=\R_{H/G}\subseteq G$ will be a fixed cross-section of right $H$-cosets.
Consider the \emph{Frobenius-embedding}
$$\rho_\R:\ G\hookrightarrow \hat G:=H\wr\Sigma_\R=H^\R{\rtimes}\Sigma_\R,\ g\mapsto (\bar g,\dot g),$$
where the permutation $\dot g$ and the function $\bar g\in H^\R$
are defined by the equation $rg=\bar g(r)\cdot r^{\dot g}$. Let
$\R'$ be a different cross-section of right $H$-cosets, then
$\R'=\{r':=\bar h(r)\cdot r\ |\ r\in\R\}$ with some function $\bar h\in H^\R$.
Then $Hr'g=Hrg=Hr^{\dot g}=Hr'(r^{\dot g})$, so
$\rho_{\R'}(g)=(\overline{\bar g},\dot g)$ with ``new" function
$\overline{\bar g}\in H^\R$, but the same permutation $\dot g\in\Sigma_\R$. From the equation $r'(r)g=\bar h(r)rg=\bar h(r)\bar g(r)r^{\dot g}=\overline{\bar g}(r)r'^{\dot g}=$
$\overline{\bar g}(r)\bar h(r^{\dot g})r^{\dot g}$
we conclude that
$\overline{\bar g}(r)=\bar h(r)\overline{g}(r)\bar h(r^{\dot g})^{-1}$, hence
$\rho_{\R'}=(\bar h,\id_\R)\cdot (\overline g,\dot g)\cdot (\bar h,\id_\R)^{-1}$.
This shows that $\rho_{\R'}={\rm int}(\bar h,\id_\R)^{-1}\circ \rho_\R$,
where ${\rm int}(\bar h,\id_\R)$ denotes the inner automorphism of $H\wr\Sigma_\R$ given
by right conjugation with the element $(\bar h,\id_\R)\in H\wr\Sigma_\R$.
Let $X$ be any group, then every group-homomorphism
$\theta:\ H\to X$ induces a canonical group homomorphism
$$\theta_\R:\ H\wr\Sigma_\R\to X\wr\Sigma_\R,\ (\bar h,\sigma)\mapsto (\theta\circ\bar h,\sigma).$$
If $\Omega$ is a right $H$-set via a homomorphism $\omega:\ H\to\Sigma_\Omega$, then
$\omega_\R$ induces a right $\hat G$-action on the set $\Omega^\R$ of functions from $\R$ to $\Omega$, given by
the formula
$$\phi^{(f,\sigma)}(r)=(\phi^f)(r^{\sigma^{-1}})=(\phi(r^{\sigma^{-1}}))\cdot f(r^{\sigma^{-1}}).$$
Note that for $g\in G$ we then have
$$\phi^g(r)=
(\phi(r^{{\dot g}^{-1}}))\cdot \bar g(r^{{\dot g}^{-1}})$$ with
$\bar g(r^{{\dot g}^{-1}})={r^{{\dot g}^{-1}}\cdot g\cdot r^{-1}}\in H$.
Let $\Omega=V\in mod-\fld H$ with corresponding homomorphism
$\omega:\ H\to {\rm GL}(V)$ and $\omega_\R:\ \hat G\to {\rm GL}(V)\wr\Sigma_\R$.
Then $V^\R\in \fld G-Mod$ and the correspondence
$$(\alpha:\ V\to W\in \fld H-Mod) \mapsto
(\alpha^\R:\ V^\R\to W^\R\in \fld G-Mod),$$
with $\alpha^\R(\phi)(r):=\alpha(\phi(r))$ for all $r\in\R$, is a functor.
Since $\rho_{\R'}={\rm int}(\bar h,\id_\R)^{-1}\circ \rho_\R$, we see that
different choices of cross-sections may result in different, but isomorphic
functors, where the isomorphism is given by conjugation with elements from
the base group $H^\R\le H\wr\Sigma_\R$. For simplicity we will always choose
cross-sections $\R$ in such a way that $1_H=r_1\in \R$.
Note that $_H\fld G\cong\oplus_{r\in\R}\fld H\cdot r\cong{_H}(\fld H)^\R\in \fld H-mod$,
hence $V^\R\cong (V\otimes_{\fld H}\fld H)^\R\cong$
$V\otimes_{\fld H}(\fld H)^\R\cong V\otimes_{\fld H}\fld G$,
which shows that $V^\R\cong \ind(V)$, the well known
\emph{(Frobenius-)induction of modules}.
For every $W\in mod-\fld G$, the $H-G$-bimodule structure on $\fld G$ turns
${\rm Hom}_{\fld G}(\fld G_G,W)$ into an $H$-right module with
$fh(x):=f(hx)$ for $h\in H$, $f\in{\rm Hom}_{\fld G}(\fld G_G,W)$ and
$x\in\fld G$. The map $f\mapsto f(1_G)$ is then an isomorphism
${\rm Hom}_{\fld G}(\fld G_G,W)\cong W_{|H}$ as right $H$-modules, and
it follows from the adjointness theorem for tensor- and Hom-functors, that the induction
functor $\ind()$ is left adjoint to the restriction functor from $mod-\fld G$ to
$mod-\fld H$. Due to the fact that $\fld G$ is a symmetric algebra (in the sense of
the theory of Frobenius-algebras), $\ind()$ is also right left adjoint to the
restriction functor from $mod-\fld G$. This is the content of classical
Frobenius-reciprocity and Nakayama-relations in representation theory of finite groups.
Now we consider the analogue of these in the theory of $G$-representations in the category
$\fld{\tt alg}$ of commutative $\fld$-algebras.

{\sl From now on the terms ``$\fld$-algebra" and ``$\fld-G$-algebra" will
always mean {\bf commutative} $\fld$-algebra or $\fld-G$-algebra.}

\noindent
Let $V^{\otimes\R}:=V^{\otimes m}=V^{(1)}\otimes V^{(2)}\otimes\cdots\otimes V^{(m)}$
denote the tensor product of $m=|\R|$ copies of $V$. Then there is a canonical ``tensor map" (\emph{not} a homomorphism)
$$t:\ V^\R\mapsto V^{\otimes\R},\ \phi\mapsto t_\phi:=\phi(r_1)\otimes\cdots\otimes\phi(r_m),$$
and a natural action of ${\rm GL}(V)\wr\Sigma_\R$ and $\hat G$ on
$V^{\otimes\R}$, defined by the effect on elementary tensors, following the rule
$(t_\phi)^\gamma:=t_{\phi^\gamma}$ for $\phi\in V^\R$ and $\gamma\in\hat G$.
The restriction to $G$,
$$\tensind(V):=V^{\otimes G}:=
(V^{\otimes\R})_{|G}\in mod-\fld G$$
is called the \emph{tensor-induction} of $V$. As with ordinary induction above,
it is well known and easy to see
that $\tensind()$ defines a functor from $mod-\fld H$ to $mod-\fld G$.
Again different choices for $\R$ yield isomorphic functors, twisted by
conjugation with elements from $H^\R$.
If $V=A\in\Halg$ is a commutative $\fld-H$-algebra, then
$\ind(A)=A^{\times\R}$ is a commutative $\fld-G$-algebra with ``diagonal
multiplication", such that ${\rm Res}_{|1_G}(\ind(A))=\prod_{r\in\R}A^{(r)}\in\fld{\tt alg}$.
Similarly $A^{\otimes G}$ is a commutative $\fld-G$-algebra with ``tensor multiplication"
$$(\otimes_{r\in\R}a_r)\cdot (\otimes_{r\in\R}a'_r)=
\otimes_{r\in\R}a_r a'_r,$$
such that ${\rm Res}_{|1}(\tensind(A))=\coprod_{r\in\R}A^{(r)}$,
and both are functors from $\Halg$ to $\Galg$.
For every $a\in A$ let $\hat a\in A^\R$ denote the function
with $\hat a(1_H)=a$ and $\hat a(r)=1_A$ for every $r\ne 1_H$, hence
$t_{\hat a}=a\otimes 1\otimes\cdots\otimes 1$. Then
${\hat a}^g(r)=\begin{cases}a\cdot g\cdot r^{-1}&\text{if}\ rg^{-1}\in H\\
1_A&\text{otherwise}
\end{cases}$,
so $(t_{\hat a})^g=1\otimes\cdots\otimes a\cdot g (r_1^{\dot g})^{-1}\otimes 1\cdots\otimes 1\in A^{\otimes\R}$, with non-trivial entry in position $r_1^{\dot g}$. For every
$\phi\in A^\R$ we have
$$t_\phi=\phi(1)\otimes\phi(r_2)\otimes\cdots\otimes\phi(r_m)=
t_{\widehat{\phi(1_H)}}\cdot (t_{\widehat{\phi(r_2)}})r_2\cdots (t_{\widehat{\phi(r_m)}})r_m$$
and we see that $A^{\otimes\R}=A[(t_{\hat a})r\ |\ r\in\R]$.

\begin{lemma}\label{G-hom_ident}
Suppose $A$ is a $\fld-H$-algebra, $B$ a $\fld-G$-algebra and
$\beta$ and $\gamma$ are $G$-equivariant algebra homomorphisms from $A^{\otimes\R}$ to $B$.
Then $\beta=\gamma$ if and only if $\beta(t_{\hat a})=\gamma(t_{\hat a})$ for all $a\in A$.
\end{lemma}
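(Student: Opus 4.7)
The plan is to exploit the explicit generating set for $A^{\otimes\R}$ exhibited in the discussion immediately preceding the lemma. One direction is trivial: if $\beta=\gamma$ as maps on $A^{\otimes\R}$, then of course $\beta(t_{\hat a})=\gamma(t_{\hat a})$ for every $a\in A$.

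For the nontrivial direction, suppose $\beta(t_{\hat a})=\gamma(t_{\hat a})$ for all $a\in A$. Since $\beta$ and $\gamma$ are $G$-equivariant, for every $a\in A$ and $r\in\R$ we have
\[
\beta\bigl((t_{\hat a})r\bigr)=\beta(t_{\hat a})\cdot r=\gamma(t_{\hat a})\cdot r=\gamma\bigl((t_{\hat a})r\bigr).
\]
So $\beta$ and $\gamma$ agree on the set $S:=\{(t_{\hat a})r\mid a\in A,\ r\in\R\}$.

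The key step is then to invoke the identity derived just before the lemma,
\[
t_\phi=t_{\widehat{\phi(1_H)}}\cdot (t_{\widehat{\phi(r_2)}})r_2\cdots (t_{\widehat{\phi(r_m)}})r_m,
\]
which expresses every elementary tensor $t_\phi\in A^{\otimes\R}$ as a product of elements of $S$. Since $A^{\otimes\R}$ is $\fld$-spanned by the elementary tensors $t_\phi$, this shows that $S$ generates $A^{\otimes\R}$ as a $\fld$-algebra. Because $\beta$ and $\gamma$ are $\fld$-algebra homomorphisms agreeing on a generating set, they must coincide everywhere, which yields $\beta=\gamma$.

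No genuine obstacle is expected here; the statement is essentially a density-type remark. The only thing to keep in mind is the bookkeeping between the $H$-action on $A$ and the $G$-action on $A^{\otimes\R}$, which has already been carried out in the paragraph preceding the lemma, so the proof reduces cleanly to the generation statement $A^{\otimes\R}=\fld\bigl[(t_{\hat a})r\mid a\in A,\ r\in\R\bigr]$ together with $G$-equivariance.
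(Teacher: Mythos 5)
Your proof is correct and follows essentially the same route as the paper: both arguments use the factorization $t_\phi=t_{\widehat{\phi(1_H)}}\cdot (t_{\widehat{\phi(r_2)}})r_2\cdots (t_{\widehat{\phi(r_m)}})r_m$ together with $G$-equivariance to reduce agreement on all elementary tensors (which span $A^{\otimes\R}$) to agreement on the elements $t_{\hat a}$.
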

\begin{proof} ``Only if" is clear, so assume
$\beta(t_{\hat a})=\gamma(t_{\hat a})$ for all $a\in A$.
Then we have for every $\phi\in A^\R$:
$$\beta(t_\phi)=\prod_{i=1}^m \beta[(t_{\widehat{\phi(r_i)}})r_i]=
\prod_{i=1}^m [\beta(t_{\widehat{\phi(r_i)}})]r_i=
\prod_{i=1}^m [\gamma(t_{\widehat{\phi(r_i)}})]r_i=\gamma(t_\phi),$$
hence $\beta=\gamma$.
\end{proof}

As mentioned above, the next result provides an analogue of
Frobenius-reciprocity and Nakayama-relations in representation theory
of finite groups. Notice that, unlike in the category of modules over group algebras,
the restriction functor now has different left and right adjoints:

\begin{thm}\label{res_adjoints}
Let $A$ be a $\fld-H$-algebra and $B$ a $\fld-G$-algebra. Let
$\iota_1:\ A\hookrightarrow A^{\otimes\R}$ be the embedding
$a\mapsto t_{\hat a}$. Then the following hold
\begin{enumerate}
\item The map
$\chi:\ \Halg(A,B_{\downarrow H})\to
\Galg(\tensind(A),B ),\ \alpha\mapsto\otimes_{r\in\R}\alpha()\cdot r$
is a bijection with inverse given by $\iota_1^*:\ \beta\mapsto \beta\circ\iota_1$.
\item The map
$\psi:\ \Halg(B_{\downarrow H},A)\to
\Galg(B,\ind(A)),\ \alpha\mapsto\times_{r\in\R}\alpha(()\cdot r^{-1})$
is a bijection with inverse given by $\pi_{1*}:\ \beta\mapsto(b\mapsto \beta(b)(1_H))$.
\item The tensor induction functor $\tensind:\ \Halg\to \Galg$
is left-adjoint and the Frobenius induction functor $\ind:\ \Halg\to \Galg$
is a right-adjoint to the restriction functor $\res:\ \Galg\to
\Halg$.
\end{enumerate}
\end{thm}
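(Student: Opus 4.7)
The plan is to establish (1) and (2) as independent bijections with the claimed inverses, and then to deduce the adjunction statement (3) by a naturality check. Throughout, the key lever is Lemma~\ref{G-hom_ident}, together with the identity $A^{\otimes\R}=A[(t_{\hat a})r\mid r\in\R]$ recorded just before the theorem; these reduce most statements about $G$-equivariant algebra homomorphisms out of $A^{\otimes\R}$ to checking their effect on the generators $t_{\hat a}$.

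For (1), I would first verify that $\chi$ is well-defined. The assignment $(a_r)_{r\in\R}\mapsto\prod_{r\in\R}\alpha(a_r)\cdot r$ is multilinear into the commutative algebra $B$ and so factors through the tensor product, yielding an algebra homomorphism $\chi(\alpha):A^{\otimes\R}\to B$. For $G$-equivariance, Lemma~\ref{G-hom_ident} reduces the check to the generators $t_{\hat a}$: using the formula $(t_{\hat a})^g=1\otimes\cdots\otimes ag(r_1^{\dot g})^{-1}\otimes\cdots\otimes 1$ from the discussion preceding the theorem, together with $ag(r_1^{\dot g})^{-1}\in H$ and $H$-equivariance of $\alpha$, one obtains $\chi(\alpha)((t_{\hat a})^g)=\alpha(a)\cdot g$, matching $\chi(\alpha)(t_{\hat a})\cdot g$. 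Next, $\iota_1^*$ lands in $\Halg$ because $\iota_1$ itself is $H$-equivariant (the same formula specialised to $g=h\in H$ gives $(t_{\hat a})^h=t_{\widehat{ah}}$). The identity $\iota_1^*\chi(\alpha)=\alpha$ is immediate from $\chi(\alpha)(t_{\hat a})=\alpha(a)$, and $\chi\iota_1^*(\beta)=\beta$ follows because the two $G$-equivariant algebra homomorphisms agree on the $t_{\hat a}$ by construction, hence, by Lemma~\ref{G-hom_ident}, everywhere.

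For (2), I would argue in parallel, but with coordinate-wise rather than multilinear structure. Given $\alpha\in\Halg(B_{\downarrow H},A)$, set $\psi(\alpha)(b)(r):=\alpha((b)r^{-1})$; this is an algebra homomorphism because multiplication on $\ind(A)$ is coordinate-wise. The crucial step for $G$-equivariance is the identity $(r^{\dot g^{-1}})^{-1}\bar g(r^{\dot g^{-1}})=gr^{-1}$, obtained by specialising the defining relation $r'g=\bar g(r')r'^{\dot g}$ at $r'=r^{\dot g^{-1}}$; applying this together with $H$-equivariance of $\alpha$ (absorbing $\bar g(r^{\dot g^{-1}})\in H$) gives $\psi(\alpha)(b)^g(r)=\alpha((bg)r^{-1})=\psi(\alpha)(bg)(r)$. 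That $\pi_{1*}$ lands in $\Halg$ uses the trivial facts $1_H^{\dot h}=1_H$ and $\bar h(1_H)=h$ for $h\in H$. The identity $\pi_{1*}\circ\psi=\id$ is then immediate, and $\psi\circ\pi_{1*}=\id$ reduces to the equality $\beta(b)(r)=\beta(b)^{r^{-1}}(1_H)$, which follows from $G$-equivariance of $\beta$ together with the elementary identities $1_H^{\dot r}=r$ and $\bar{r^{-1}}(r)=1_H$.

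Part (3) then follows from (1) and (2) once one checks that the bijections $\chi$ and $\psi$ are natural in both $A$ and $B$; this is a routine computation from their explicit formulas (reducing, for $\chi$, to the generators $t_{\hat a}$, and for $\psi$, to evaluation at $r=1_H$). The main technical obstacle throughout is the bookkeeping around the Frobenius embedding $g\mapsto(\bar g,\dot g)$: each $G$-equivariance check rests on applying the defining relation $rg=\bar g(r)r^{\dot g}$ in the right form. Lemma~\ref{G-hom_ident} localises the work for (1) to the finite-length generating expressions $t_{\hat a}$, whereas (2) genuinely requires the coordinate-wise identity $(r^{\dot g^{-1}})^{-1}\bar g(r^{\dot g^{-1}})=gr^{-1}$ for general $r\in\R$, which is the step I expect to be most easily mishandled.
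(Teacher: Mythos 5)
Your proof is correct and follows essentially the same route as the paper: part (1) is verified on the generators $t_{\hat a}$ exactly as in the paper's argument, using Lemma~\ref{G-hom_ident} and the relation $r^{\dot g^{-1}}gr^{-1}\in H$, while parts (2) and (3) simply fill in computations the paper explicitly leaves as routine. The only cosmetic point is that Lemma~\ref{G-hom_ident} as stated compares two \emph{already} $G$-equivariant maps, so your equivariance check for $\chi(\alpha)$ is more accurately phrased as two algebra homomorphisms agreeing on the generating set $\{(t_{\hat a})r\mid a\in A,\ r\in\R\}$ rather than as a direct application of that lemma; the computation you give supplies exactly this.
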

\begin{proof}
(1):\ Let $\alpha\in\Halg(A,B_{\downarrow H})$. For every $r\in\R$ the map
$\alpha()\cdot r$ is a $\fld$-algebra homomorphism
from $A$ to $B$. Since $A^{\otimes\R}=\coprod_{r\in\R} A^{(r)}$ is the coproduct
in $\fld{\tt alg}$, it follows that $\chi(\alpha)\in\fld{\tt alg}(A^{\otimes\R},B)$. We now show that $\chi$ maps
$H$-morphisms to $G$-morphisms:\\
For every $\phi\in A^\R$ and $g\in G$ we have
$$[\otimes_{r\in\R}\alpha(t_\phi)\cdot r]g=[\prod_{r\in\R}\alpha(\phi(r))\cdot r]g=
\prod_{r\in\R}[\alpha(\phi(r))\cdot rg]=
\prod_{r\in\R}[\alpha(\phi(r^{{\dot g}^{-1}}))\cdot r^{{\dot g}^{-1}}g]=$$
$$\prod_{r\in\R}[\alpha(\phi(r^{{\dot g}^{-1}}))\cdot r^{{\dot g}^{-1}}gr^{-1}]r=_{(\text{since}\
r^{{\dot g}^{-1}}gr^{-1}\in H)}\
\prod_{r\in\R} [\alpha(\phi(r^{{\dot g}^{-1}})\cdot r^{{\dot g}^{-1}}gr^{-1})]\cdot r=$$
$\prod_{r\in\R}\alpha(\phi^g(r))\cdot r=\otimes_{r\in\R}\alpha(t_{\phi^g})\cdot r.$
This shows that $\chi(\alpha)((t_\phi)g)=[\chi(\alpha)(t_\phi)]g$, hence
$\chi(\alpha)$ is $G$-equivariant.
Clearly $\iota_1^*$ maps $G$-morphisms to $H$-morphisms and
$\iota_1^*\circ\chi(\alpha)=\alpha$ for every $\alpha$. For $\beta\in
\Galg(\tensind(A),B)$ we have
$\chi\circ\iota_1^*(\beta)(t_{\hat a})=$
$$\beta\circ\iota_1(a)\cdot 1\otimes\beta\circ\iota_1(1_A)\cdot r_2\otimes\cdots\otimes
\beta\circ\iota_1(1_A)\cdot r_m=
\beta\circ\iota_1(a)\otimes1_A\otimes\cdots\otimes1_A=\beta(t_{\hat a}).$$
Hence $\chi\circ\iota_1^*(\beta)=\beta$ by Lemma \ref{G-hom_ident}.\\
(2):\ This follows from the ``usual" adjointness of
the functor pair $(\res, \ind)$ in representation theory,
together with the description of the product in $\fld{\tt alg}$.
It is straightforward to confirm that the given maps are well-defined and mutually inverse
algebra morphisms.\\
(3):\ This follows immediately from (1) and (2).
\end{proof}

\begin{rem}\label{comp_with_G-SET}
\begin{enumerate}
\item Theorem  \ref{res_adjoints} has an analogue in the theory
of permutation sets. Here the functor $\Omega\mapsto\Omega^\R$ from $H$-sets to
$G$-sets is the analogue of ``tensor-induction", however, it turns out to be a
\emph{right adjoint} of the restriction functor, whereas a left adjoint is
given by the functor which maps $\Omega\mapsto\Omega\times G/H$, the $G$-set
of $H$-orbits on the cartesian product $\Omega\times G$ with $H\times G$-action
$(\omega\times g)(h,g'):=\omega\cdot h\times h^{-1}gg').$
\item Let $L:=\tensind$ and $F:=\res$ and consider the unit and co-unit
$u:=u^{(L,F)}$, $c:=c^{(L,F)}$ as in Theorem \ref{adj_form}. Then
$$u_A:\ A\to A\otimes 1\otimes\cdots\otimes 1\le \res(A^{\otimes \R})\in\Halg,\ a\mapsto t_{\hat a}$$ is the canonical embedding and
$$c_B:\ (B_{|H})^{\otimes\R}\to B,\ {\rm maps}\ b^{(1)}\otimes b^{(r_2)}\otimes\cdots\otimes b^{(r_m)}\mapsto \prod_{r\in\R}b^{(r)}\cdot r.$$
If $L=\res$ and $F=\ind$, then
$u_B:\ B\to (B_{|H})^{\times\R}$ maps $b\mapsto (b,br_2^{-1},\cdots,br_m^{-1})$,
whereas
$c_A:\ (A^{\times\R})_{|H}\to A$ maps $\bar a\mapsto \bar a(1_H)$.
\item It is well known that right adjoint functors are strongly left continuous (i.e. it they
respect limits) and $L$ is strongly right continuous (i.e. it respects colimits).
Hence $\ind$ respects limits (e.g. products, kernels, monomorphisms and injective maps) and
$\tensind$ respects colimits (e.g. coproducts, cokernels, epimorphisms and surjective maps).
\end{enumerate}
\end{rem}

Consider the unit $u:=u^{(\tensind,\res)}:\ A\hookrightarrow \res(\tensind(A))$
and the co-unit $c':=c^{(\res,\ind)}:\ (A^{\times\R})_{|H}\to A$.
The ``multiplication map" $\mu:\ \res(\tensind(A))\to A,\ \mu(t_f)\mapsto\prod_{r\in\R}f(r)$
and the ``constant map" ${\rm const}:\ A\hookrightarrow \res(\ind(A)),\ a\mapsto (a,a,\cdots,a)$
satisfy $\mu\circ u=\id_A=c'\circ {\rm const}$, hence they respectively split
$u$ and $c'$ in $\alg$, but not necessarily in $\Halg$.

\begin{lemma}\label{p_nilpotent}
Assume that $\R\subseteq G$ is normalized by $H$ (e.g. $\R\unlhd G$ is a normal subgroup
with complement $H$). Then the maps
$\mu$ and ${\rm const}$ are $H$-equivariant and therefore they
split the unit $u$ and the co-unit $c'$ in $\Halg$, respectively.
\end{lemma}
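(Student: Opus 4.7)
The plan is to exploit the normalization hypothesis to make the Frobenius embedding $\rho_\R$ essentially trivial on the subgroup $H$. For any $h\in H$ and $r\in\R$ the element $h^{-1}rh$ again lies in $\R$ by assumption, and it evidently lies in the coset $Hrh$; since $\R$ is a transversal, the defining equation $rh=\bar h(r)\cdot r^{\dot h}$ then forces $r^{\dot h}=h^{-1}rh$ and $\bar h(r)=rh\cdot(h^{-1}rh)^{-1}=h$. Thus, on restriction to $H$, the function $\bar h\in H^\R$ is the constant function with value $h$, and $\dot h$ acts on $\R$ by the conjugation $r\mapsto h^{-1}rh$; equivalently, $r^{\dot h^{-1}}=hrh^{-1}$. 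Substituting into the general formula $\phi^g(r)=\phi(r^{\dot g^{-1}})\cdot\bar g(r^{\dot g^{-1}})$ gives the explicit $H$-action
$$\phi^h(r)=\phi(hrh^{-1})\cdot h.$$

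With this formula in hand, both equivariance statements are direct calculations. For $\mu\colon \res(\tensind(A))\to A$, which is already a $\fld$-algebra homomorphism because $A$ is commutative, I compute
$$\mu\bigl((t_\phi)^h\bigr)=\prod_{r\in\R}\phi(hrh^{-1})\cdot h=\Bigl(\prod_{r\in\R}\phi(hrh^{-1})\Bigr)\cdot h,$$
and the substitution $r\mapsto hrh^{-1}$ is a bijection of $\R$ onto itself by hypothesis, so the product re-indexes to $\prod_{r\in\R}\phi(r)=\mu(t_\phi)$, yielding $\mu((t_\phi)^h)=\mu(t_\phi)\cdot h$. For ${\rm const}\colon A\to\res(\ind(A))$, the constant function with value $a\in A$ is sent by the $H$-action to $r\mapsto a\cdot h$, which is again constant; hence $({\rm const}(a))^h={\rm const}(a\cdot h)$, as required.

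Since $\mu$ and ${\rm const}$ are now established as $H$-equivariant, the identities $\mu\circ u=\id_A$ and $c'\circ{\rm const}=\id_A$, which were already noted in the preceding paragraph at the level of $\alg$, take place in $\Halg$; so $u$ is split by $\mu$ and $c'$ is split by ${\rm const}$, both in $\Halg$. The only real obstacle is keeping track of conventions: one must be careful that the definition of $\bar h$ and $\dot h$ really does simplify to $\bar h\equiv h$ (and not $\bar h\equiv h^{-1}$ or similar) under normalization, and that the right-action conventions are consistent between the formulas for $\phi^g$, the tensor map $t$, and the definition of $\mu$. Once these are pinned down, the proof reduces to commutativity of $A$ and a change of variable over $\R$.
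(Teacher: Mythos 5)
Your proof is correct and follows essentially the same route as the paper's: both derive $\bar h(r)=h$ and $r^{\dot h}=h^{-1}rh$ from the normalization hypothesis via the defining equation $rh=\bar h(r)\cdot r^{\dot h}$, then verify equivariance of $\mu$ by re-indexing the product over $\R$ and of ${\rm const}$ by observing that constant functions stay constant under the action. The splitting conclusion is then, as in the paper, immediate from the previously established identities $\mu\circ u=\id_A$ and $c'\circ{\rm const}=\id_A$.
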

\begin{proof}
The hypothesis implies $rh=\bar h(r)r^{\dot h}=hh^{-1}rh=hr^h$,
hence $r^{\dot h}=r^h$ and $\bar h(r)=h$ for all $r\in\R$.
Therefore
$\mu(t_fh)=\prod_{r\in\R}(f^h)(r)=$ $\prod_{r\in\R}(f(r^{{\dot h}^{-1}}))\cdot \bar h(r)=$
$\prod_{r\in\R}(f(r^h))\cdot h=\mu(t_f)\cdot h$. Similarly
$[({\rm const}(a))\cdot h](r)=[({\rm const}(a))](r^{\dot h})\cdot\bar h(r)=
{\rm const}(ah)(r)$. Hence $\mu$ and ${\rm const}$ are in $\Halg$. The rest follows
from the previous remarks.
\end{proof}

In general we have the following result about the splitting behaviour of
$u$ and $c'$:

\begin{cor}\label{retract of res}
Let $A\in\Halg$, then the following are equivalent:
\begin{enumerate}
\item $A$ is a retract of $\res(B)$ for some $B\in\Galg$;
\item $A$ is a retract of $\res(\tensind)(A)$;
\item the unit $u^{(\tensind,\res)}_A$ splits in $\Halg$;
\item $A$ is a retract of $\res(\ind)(A)$;
\item the counit $c^{(\res,\ind)}_A$ splits in $\Halg$.
\end{enumerate}
\end{cor}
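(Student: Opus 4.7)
The plan is to close two loops, (3)$\Rightarrow$(2)$\Rightarrow$(1)$\Rightarrow$(3) and (5)$\Rightarrow$(4)$\Rightarrow$(1)$\Rightarrow$(5), sharing the common hypothesis (1). Four of these six implications are essentially formal. Namely, (3)$\Rightarrow$(2) and (5)$\Rightarrow$(4) follow immediately from the definition of ``retract'': if $u^{(\tensind,\res)}_A$ (resp.\ $c^{(\res,\ind)}_A$) admits a one-sided inverse in $\Halg$, then together they exhibit $A$ as a retract of $\res(\tensind(A))$ (resp.\ $\res(\ind(A))$). The implications (2)$\Rightarrow$(1) and (4)$\Rightarrow$(1) reduce to taking $B:=\tensind(A)$ or $B:=\ind(A)$, both of which are objects of $\Galg$.

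The substantive content lies in (1)$\Rightarrow$(3) and (1)$\Rightarrow$(5), each being a short diagram chase using the adjunctions of Theorem~\ref{res_adjoints}. Suppose $A$ is a retract of $\res(B)$ via morphisms $i\in\Halg(A,\res(B))$ and $r\in\Halg(\res(B),A)$ with $r\circ i=\id_A$. For (1)$\Rightarrow$(3), the adjunction $\tensind\dashv\res$ transports $i$ to the unique $\tilde{\imath}\in\Galg(\tensind(A),B)$ such that $i=\res(\tilde{\imath})\circ u^{(\tensind,\res)}_A$; composing with $r$ on the left yields
\[
\bigl(r\circ\res(\tilde{\imath})\bigr)\circ u^{(\tensind,\res)}_A \;=\; r\circ i \;=\; \id_A,
\]
so $r\circ\res(\tilde{\imath})\in\Halg(\res(\tensind(A)),A)$ is a retraction of the unit. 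Dually, for (1)$\Rightarrow$(5), the adjunction $\res\dashv\ind$ transports $r$ to the unique $\tilde{r}\in\Galg(B,\ind(A))$ with $r=c^{(\res,\ind)}_A\circ\res(\tilde{r})$, and then
\[
c^{(\res,\ind)}_A\circ\bigl(\res(\tilde{r})\circ i\bigr) \;=\; r\circ i \;=\; \id_A,
\]
exhibiting $\res(\tilde{r})\circ i\in\Halg(A,\res(\ind(A)))$ as a section of the counit.

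The argument is an instance of the familiar abstract-nonsense fact that for an adjunction $F\dashv G$, an object $A$ is a retract of some $G(B)$ exactly when the unit $\eta_A$ splits, and is a retract of some $F(C)$ exactly when the counit $\varepsilon_A$ splits. There is therefore no genuine obstacle. The only point worth checking is that the constructed splittings genuinely live in $\Halg$ and not merely in $\alg$; this is automatic because the units and counits are $H$-equivariant by their explicit description in Theorem~\ref{res_adjoints}, and $\res$ sends $G$-equivariant maps to $H$-equivariant ones.
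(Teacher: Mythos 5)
Your proof is correct. It differs from the paper's in route rather than in substance: the paper disposes of this corollary in one line by citing Proposition~\ref{image_of_T} from the appendix, which is a general statement about a functor $F$ possessing both a left adjoint $L$ and a right adjoint $R$, proved there via the $F$-trace map ${\rm T}_F$ and the ``two-sided ideal'' property of its image (Lemma~\ref{F-trace_formulae}). You instead re-derive exactly the equivalences needed here by two direct diagram chases, using each adjunction of Theorem~\ref{res_adjoints} separately: transporting the section $i$ across $\tensind\dashv\res$ to split the unit, and transporting the retraction $r$ across $\res\dashv\ind$ to split the counit. The computations $r\circ\res(\tilde{\imath})\circ u_A=r\circ i=\id_A$ and $c_A\circ\res(\tilde{r})\circ i=r\circ i=\id_A$ are exactly the standard adjunction identities $\Psi(\alpha)=F(\alpha)\circ u$ and $\Psi^{-1}(\beta)=c\circ F(\beta)$ in the notation of Theorem~\ref{adj_form}, so everything you write is licensed by the paper's own formulae, and $H$-equivariance of the splittings is automatic since every map in your composites is already a morphism of $\Halg$. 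What the paper's detour through ${\rm T}_F$ buys is the stronger statement that the set of endomorphisms of $A$ factoring through \emph{any} $\res(B)$ coincides with the image of the trace map ${\rm T}_{\res}$, which is used elsewhere (e.g.\ in Proposition~\ref{s-prj_cat}); your argument is the leaner, self-contained version sufficient for this corollary alone.
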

\begin{proof}
This follows immediately from Proposition \ref{image_of_T} of
Appendix \ref{adj_funct}
\end{proof}

\section{Frobenius reciprocity in $\ts_G$}

Let $X\le Y\le G$ be subgroups and $W\in\fld Y-mod$, then $W^X$ denotes
the subspace of $X$-fixed points and we denote with $t_X^Y$ the \emph{relative trace} map
$$t_X^Y:\ W^X\to W^Y,\ w\mapsto \sum_{g\in \R_{X\backslash Y}} wg\in W^Y,$$
where $\R_{X\backslash Y}$ denotes a cross-section of right $X$-cosets in $Y$, satisfying
$Y=\cup_{g\in \R_{X\backslash Y}} Xg$. It is easy to see that $t_X^Y$ is a linear transformation, which is
independent of the choice of the cross-section $\R_{X\backslash Y}$.
Moreover, $\tr_Y(w)=t_1^Y(w)=t_X^Y\circ t_1^X(w)$ and
since $Y=X\cdot \R_{X\backslash Y}=Y^{-1}=\R_{X\backslash Y}^{-1}\cdot X$ we have
$$\tr_Y(w)=\sum_{y\in \R_{X\backslash Y}}\sum_{x\in X}wxy=
\sum_{x\in X}\sum_{y\in \R_{X\backslash Y}}wy^{-1}x=\tr_X(w')$$
with $w':=\sum_{y\in \R_{X\backslash Y}}wy^{-1}$.

\begin{lemma}\label{ts_restrict}
Assume $m:=[G:H]$ is coprime to $p=\chr(\fld)$ and let $A\in\Galg$.
Then $A\in\ts_G\iff \res(A)\in\ts_H.$
\end{lemma}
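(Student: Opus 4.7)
The plan is to prove the two implications separately, using the transfer identity $\tr_G = t_H^G \circ \tr_H$ already recorded in the paragraph just before the lemma, together with the Higman-type criterion of Lemma~\ref{tr}. It will turn out that only one of the two implications genuinely uses the hypothesis $\gcd(m,p)=1$.

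For the direction $\res(A) \in \ts_H \Rightarrow A \in \ts_G$, I would pick any $a \in A$ with $\tr_H(a) = 1$ and simply compute
\[\tr_G(a) \;=\; t_H^G(\tr_H(a)) \;=\; t_H^G(1_A) \;=\; \sum_{y \in \R} 1_A \cdot y \;=\; m,\]
where the last equality uses that $1_A \in A^G$ is fixed by every $y \in \R$. The coprimality of $m$ with $p=\chr(\fld)$ makes $m$ a unit in $\fld$, so $\tr_G(a/m) = 1$ and hence $A \in \ts_G$.

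For the converse $A \in \ts_G \Rightarrow \res(A) \in \ts_H$, I would invoke Lemma~\ref{tr} (with its auxiliary subgroup taken to be trivial) combined with Remark~\ref{proj}, which together rephrase $A \in \ts_G$ as saying that $A$ is a projective $\fld G$-module, i.e. a direct summand of a free $\fld G$-module. Because ${}_{\fld H}\fld G \cong \bigoplus_{r \in \R} \fld H \cdot r$ is free of rank $m$ as a left $\fld H$-module, restriction to $\fld H$ carries free $\fld G$-modules to free $\fld H$-modules, and therefore preserves projectivity. Hence $\res(A)$ is projective as a $\fld H$-module, and a second application of Lemma~\ref{tr} yields $\res(A) \in \ts_H$. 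Note that this half does not use the coprime hypothesis at all.

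There is no real obstacle; the lemma ultimately reduces to the transfer formula paired with the invertibility of the index in $\fld$ in one direction, and to the fact that restriction preserves projectivity in the other. The only subtle point worth flagging is the asymmetry, namely that the hypothesis $\gcd(m,p)=1$ is used solely to lift $H$-trace-surjectivity to $G$-trace-surjectivity, and is inessential for the reverse passage.
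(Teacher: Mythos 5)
Your proposal is correct, and the forward half (lifting an $H$-point $a$ to the $G$-point $a/m$ via $\tr_G=t_H^G\circ\tr_H$) is exactly the paper's argument. For the converse, however, you take a genuinely different route: the paper stays entirely at the level of elements, regrouping the sum $\tr_G(a')=\sum_{g\in G}(a')g$ as $\tr_H(a'')$ with $a'':=\sum_{r\in\R}(a')r^{-1}$ (this is precisely the identity $\tr_Y(w)=\tr_X(w')$ recorded in the paragraph immediately preceding the lemma), which produces an explicit $H$-point in one line. You instead pass through module theory: $A\in\ts_G$ is equivalent, via Lemma~\ref{tr} and Remark~\ref{proj}, to $A$ being a projective $\fld G$-module; restriction preserves projectivity because $\fld G$ is $\fld H$-free of rank $m$; and Lemma~\ref{tr} converts projectivity of $\res(A)$ back into $H$-trace-surjectivity. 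Both arguments are valid and both correctly isolate the coprimality hypothesis as needed only in the lifting direction. The paper's computation is more elementary and self-contained (and hands you the $H$-point explicitly), while yours has the merit of exhibiting the converse as an instance of the standard fact that restriction preserves relative projectivity, which fits the Higman-criterion framework the authors set up in Section~2; the cost is that you lean on Proposition~\ref{higman} and Remark~\ref{proj} in their not-necessarily-finitely-generated form, which the paper does guarantee but which is more machinery than the statement requires.
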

\begin{proof}
If $a\in A_{|H}$ is a point, then $t_1^G(\frac{a}{m})=$
$t_H^G(t_1^H(\frac{a}{m}))=$
$t_H^G(\frac{1}{m})=1,$ so $\frac{a}{m}$ is a $G$-point in $A$.
If $a'\in A$ is a $G$-point, then
$1=t_1^G(a')=t_1^H(a'')$ with $a''=\sum_{g\in \R_{H\backslash G}}a'g^{-1}$,
hence $a''\in A$ is an $H$-point.
\end{proof}

\begin{prp}\label{tens_ind_ts}
Assume that $p$ does not divide $m=[G:H]$, then the following hold:
\begin{enumerate}
\item If $A\in\ts_H$, then $\tensind(A)$ and $\ind(A)\in\ts_G$ and if $B\in\ts_G$, then
$\res(B)\in\ts_H$. In particular restriction to $\ts_H$ and $\ts_G$
induces the adjoint pairs of functors
$$(\tensind_{|\ts_H},\res_{|{\ts_G}})\ {\rm and}\
(\res_{|{\ts_G}},\ind_{|{\ts_H}}).$$
\item If $A\in\Halg$, then $A\in\ts_H\iff\ind(A)\in\ts_G$.
\end{enumerate}
\end{prp}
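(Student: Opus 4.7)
My plan is to treat the three assertions in (1) separately and then deduce (2) from the analysis of $\ind$. The claim that $\res(B) \in \ts_H$ whenever $B \in \ts_G$ is precisely Lemma \ref{ts_restrict}, so nothing further is required there; the adjointness statement follows immediately once the induction functors are shown to land in the required subcategories, since $\ts_G \subseteq \Galg$ and $\ts_H \subseteq \Halg$ are full subcategories.

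For $\tensind(A) \in \ts_G$, I start with an $H$-point $a \in A$ (i.e.\ $t_1^H(a) = 1$) and consider $z := t_{\widehat{a/m}} = (a/m) \otimes 1 \otimes \cdots \otimes 1 \in A^{\otimes \R}$, which is well-defined because $p \nmid m$ makes $1/m \in \fld$ available. The key observation is that $1 = r_1 \in \R$ is fixed by every $h \in H$: the identity $1 \cdot h = h \cdot 1$ forces $1^{\dot h} = 1$ and $\bar h(1) = h$. The action formula from Section 3 then yields $z \cdot h = ((a/m) h) \otimes 1 \otimes \cdots \otimes 1$ for each $h \in H$, so $t_1^H(z) = t_1^H(a/m) \otimes 1 \otimes \cdots \otimes 1 = (1/m) \cdot 1_{A^{\otimes \R}}$. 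Applying $t_H^G$ and using $\tr_G = t_H^G \circ t_1^H$ then gives $\tr_G(z) = m \cdot (1/m) = 1$.

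For $\ind(A) \in \ts_G$, I take $\phi_0 \in A^{\times \R} = \ind(A)$ defined by $\phi_0(1) = a$ and $\phi_0(r) = 0$ for $r \neq 1$. The $G$-action formula $\phi^g(r) = \phi(r^{{\dot g}^{-1}}) \cdot \bar g(r^{{\dot g}^{-1}})$ shows that $\phi_0^g(r)$ is nonzero only when $r^{{\dot g}^{-1}} = 1$, i.e.\ when $g \in Hr$. Parameterising this set as $g = hr$ with $h \in H$ and reading off $\bar g(1) = g r^{-1} = h$, one gets $\phi_0^g(r) = a \cdot h$, so summing over $g \in G$ yields $\tr_G(\phi_0)(r) = \sum_{h \in H} a h = t_1^H(a) = 1$ at every position $r \in \R$. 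Thus $\tr_G(\phi_0) = 1_{\ind(A)}$; notably this argument never uses $p \nmid m$.

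For part (2), the forward direction is what was just shown. For the converse, given $\phi \in \ind(A)$ with $\tr_G(\phi) = 1$, I evaluate both sides at position $r_1 = 1$. For each fixed $r \in \R$ the set of $g \in G$ with $1^{{\dot g}^{-1}} = r$ equals $r^{-1} H$, and for $g = r^{-1} h$ in this set one reads off $\bar g(r) = h$; reorganising the resulting double sum produces $1 = \sum_{r \in \R} t_1^H(\phi(r)) = t_1^H\bigl(\sum_{r \in \R} \phi(r)\bigr)$, exhibiting an $H$-point of $A$. The main obstacle throughout is keeping the wreath-product bookkeeping for $(\bar g, \dot g)$ straight; once that is under control every claim reduces to a direct calculation.
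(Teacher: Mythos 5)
Your proof is correct, and for the tensor-induction half it coincides with the paper's: the element $t_{\widehat{a/m}}=(1/m)\,t_{\hat a}$ is exactly the point the paper exhibits, and the adjointness and restriction claims are discharged the same way (Lemma \ref{ts_restrict} plus Theorem \ref{res_adjoints}). Where you genuinely diverge is in the treatment of $\ind(A)$. The paper re-uses the element $\hat a=(a,1,\dots,1)$ and asserts $\tr_H(\hat a)=\widehat{\tr_H(a)}$, which forces the extra factor $1/m$; note that with the componentwise addition on $A^{\times\R}$ the off-identity coordinates of $\tr_H(\hat a)$ actually sum to $|H|\cdot 1_A$ rather than $1_A$, so the paper's intermediate formula is only correct up to this (harmless for the conclusion, but sloppy) discrepancy. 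Your choice $\phi_0=(a,0,\dots,0)$ sidesteps this entirely: the computation $\tr_G(\phi_0)(r)=\sum_{h\in H}ah=1$ is clean, needs no division by $m$, and — as you observe — shows that $A\in\ts_H\Rightarrow\ind(A)\in\ts_G$ and the converse in (2) hold without the hypothesis $p\nmid[G:H]$, a slight strengthening. Likewise for (2)$\Leftarrow$ the paper argues structurally via the counit $c:\res(\ind(A))\to A$, $\bar f\mapsto\bar f(1_H)$, together with Lemma \ref{ts_restrict} (which does use $p\nmid m$), whereas your direct evaluation of $\tr_G(\phi)$ at the coordinate $r_1=1_H$ produces the explicit $H$-point $\sum_{r\in\R}\phi(r)$ unconditionally. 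Both routes are valid; the paper's is shorter and more functorial, yours is more explicit, avoids the questionable intermediate identity, and isolates exactly where the coprimality hypothesis is (and is not) needed.
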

\begin{proof}
(1):\ Let $a\in A\in\ts_H$ and set $\hat a:=(a,1,\cdots,1)\in A^\R$.
Then $\tr_G(\hat a)=\tr_H^G(\tr_H(\hat a))=\tr_H^G(\widehat{\tr_H(a)})$ and
$\tr_G(t_{\hat a})=\tr_H^G(\tr_H(t_{\hat a}))=\tr_H^G(t_{\widehat{\tr_H(a)}})$.
It follows that if $a$ is an $H$-point in $A$, then
$1/m\cdot\hat a$ is a ($G$-) point in $\ind(A)$ and
$1/m\cdot t_{\hat a}$ is a ($G$-) point in $\tensind(A)$. The rest of the statement follows from
Lemma \ref{ts_restrict} and Theorem \ref{res_adjoints}.\\
(2):\ It suffices to show ``$\Leftarrow$". Let $A\in\Halg$ with $\ind(A)\in\ts_G$; the
counit $c:\ \res(\ind(A))=(A^{\times\R})_{|H}\to A$, mapping
$\bar f\mapsto \bar f(1_H)$ is $H$-equivariant (and surjective), so its maps $H$-points to
$H$-points. By Lemma \ref{ts_restrict}, $\res(\ind(A))$ has $H$-points, hence so does
$A$ and therefore $A\in\ts_H$.
\end{proof}

\begin{prp}\label{preserved_props}
Assume that $p$ does not divide $m=[G:H]$.
If $A\in\ts_H$ is (a) s-projective, (b) an s-generator, (c) universal or (d) cyclic,
then so is $\tensind(A)\in\ts_G$.
\end{prp}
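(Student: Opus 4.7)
The plan is to use the adjoint pair $(\tensind_{|\ts_H},\res_{|\ts_G})$ from Proposition~\ref{tens_ind_ts} to deduce (a)--(c) by formal adjointness, and to prove (d) by a direct computation of the $G$-action on a natural generating element of $\tensind(A)$.

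For (a), given a surjection $\theta:B\twoheadrightarrow C$ in $\ts_G$ and $\phi\in\ts_G(\tensind(A),C)$, I would transpose $\phi$ via Theorem~\ref{res_adjoints}(1) to $\psi\in\ts_H(A,\res(C))$. Since surjectivity is set-theoretic, $\res(\theta):\res(B)\twoheadrightarrow\res(C)$ is surjective in $\ts_H$ (using Lemma~\ref{ts_restrict}), so s-projectivity of $A$ provides a lift $\tilde\psi\in\ts_H(A,\res(B))$ whose transpose $\tilde\phi\in\ts_G(\tensind(A),B)$ satisfies $\theta\circ\tilde\phi=\phi$ by naturality. For (c), the same transposition (without the lifting step) sends a morphism $A\to\res(Y)\in\ts_H$ to a morphism $\tensind(A)\to Y\in\ts_G$, using that $\res(Y)\in\ts_H$. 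For (b), since $\tensind$ is a left adjoint it preserves coproducts (Remark~\ref{comp_with_G-SET}(3)) and $\otimes$ is the coproduct in both $\Halg$ and $\Galg$, so $\tensind(A^{\otimes\ell})\cong\tensind(A)^{\otimes\ell}$. Given $R\in\ts_G$, the s-generator property of $A$ yields a surjection $\phi:A^{\otimes\ell}\twoheadrightarrow\res(R)$ in $\ts_H$, and its transpose $\Psi:\tensind(A)^{\otimes\ell}\to R$ satisfies $\Psi(t_{\widehat{a'}})=\phi(a')$ for every $a'\in A^{\otimes\ell}$ by the formula in Theorem~\ref{res_adjoints}(1); hence $\mathrm{Im}(\Psi)\supseteq\phi(A^{\otimes\ell})=R$, so $\Psi$ is surjective.

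For (d), fix an $H$-point $a\in A$ whose $H$-orbit generates $A$ as a $\fld$-algebra, and set $\xi:=t_{\hat a}/m$; this is a $G$-point of $\tensind(A)$ by the argument in the proof of Proposition~\ref{tens_ind_ts}. The claim is that the $G$-orbit of $\xi$ already generates $A^{\otimes\R}=\tensind(A)$. From the defining equation $rg=\bar g(r)\cdot r^{\dot g}$ one reads off $1^{\dot h}=1,\ \bar h(1)=h$ for $h\in H$, and $1^{\dot{r_0}}=r_0,\ \bar{r_0}(1)=1$ for $r_0\in\R$; substituting these into $\phi^g(r)=\phi(r^{\dot g^{-1}})\cdot\bar g(r^{\dot g^{-1}})$ and iterating via $(t_{\hat a})^{hr_0}=((t_{\hat a})^h)^{r_0}=(t_{\widehat{ah}})^{r_0}$ yields
$$\xi\cdot g \;=\; \tfrac{1}{m}\,(ah)^{(r_0)} \qquad \text{for } g=hr_0,\ h\in H,\ r_0\in\R,$$
where $(ah)^{(r_0)}\in A^{\otimes\R}$ denotes the elementary tensor with $ah$ in position $r_0$ and $1_A$ elsewhere. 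Consequently the $\fld$-subalgebra generated by $\{\xi\cdot g\mid g\in G\}$ contains $\fld\langle ah\mid h\in H\rangle^{(r_0)}=A^{(r_0)}$ for every $r_0\in\R$, and since these tensor slots collectively generate $A^{\otimes\R}$, it equals all of $\tensind(A)$. The main obstacle is precisely this wreath-product bookkeeping in (d); parts (a)--(c) are essentially formal transcriptions of standard categorical arguments once the adjunction of Proposition~\ref{tens_ind_ts} is in hand.
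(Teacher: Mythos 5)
Your proof is correct and takes essentially the same route as the paper's: parts (a)--(c) are the formal adjunction arguments for the pair $(\tensind,\res)$ (the paper delegates (a) to the general Proposition~\ref{s-prj_cat}(1) of the appendix, which your diagram chase reproves in this special case, and obtains surjectivity in (b) from $\tensind(\beta)$ composed with the counit, which is the same map as your $\Psi$), while (d) is the explicit wreath-product verification of the paper's one-line assertion that $\tensind(A)=\fld[t_{\hat a}g\mid g\in G]$.
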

\begin{proof}
(a):\ This follows from Proposition \ref{s-prj_cat} (1).\\
(b):\ Suppose that $A\in\ts_H$ is an $s$-generator and $B\in\ts_G$. Then
there is a surjective morphisms $\beta:\ A^{\otimes N}=\coprod_{i=1}^N A\to \res(B)$.
Hence $\tensind(\beta):\ \tensind(A^{\otimes N})\cong \tensind(A)^{\otimes N}\to\tensind(\res(B))$
is surjective by Remark \ref{comp_with_G-SET} (2). Since the counit $c_B:\ \tensind(\res(B))\to B$
of Remark \ref{comp_with_G-SET} (3) is surjective, it follows that $\tensind(A)$ is an s-generator.\\
(c):\ Suppose that $A\in\ts_H$ is universal and $B\in\ts_G$. Then there exists
$\alpha\in\ts_H(A,\res(B))$, hence $\chi(\alpha)\in\ts_G(\tensind(A),B)$, so $\tensind(A)$ is
universal in $\ts_G$.\\
(d):\ If $A\in\ts_H$ is cyclic, then $A=\fld[(a)h\ |\ h\in H]$ for some point $a\in A$.
It follows by construction that
$\tensind(A)=$ $\fld[t_{\hat a}hr\ |\ h\in H, r\in\R]=$ $\fld[t_{\hat a}g\ |\ g\in G]$,
hence $\tensind(A)$ is cyclic.
\end{proof}

\begin{lemma}\label{prj_gen_then_univ}
Let $B\in\ts^o_G$. If $B$ is s-projective or an s-generator, then $B$ is universal.
\end{lemma}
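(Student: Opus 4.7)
The plan is to handle the two cases independently, each reducing to producing a morphism from $B$ to an arbitrary $R\in\ts_G$, and in both cases exploiting the fact that $D_{reg}$ (or a tensor power of it) plays the role of a ``free'' universal object.

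If $B$ is an s-generator, then by Lemma \ref{generators in Ts} for each $R\in\ts_G$ there is a surjective morphism $\Psi\colon B^{\otimes\ell}\to R$ for some $\ell\ge 1$. The coproduct inclusions $\tau_i\colon B\hookrightarrow B^{\otimes\ell}$ (inserting $1$ in the remaining tensor factors) are $G$-equivariant because the tensor product carries the diagonal $G$-action, so $\Psi\circ\tau_i\in\ts_G(B,R)$. Hence $\ts_G(B,R)\neq\emptyset$ and $B$ is weakly initial, i.e.\ universal. Note this case does not actually use the hypothesis $B\in\ts_G^o$.

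If $B$ is s-projective, I would exhibit $B$ as a retract of a tensor power of $D_{reg}$. Since $B\in\ts_G^o$ and $B$ is finitely generated as a $\fld$-algebra, I can pick finitely many points $b_1,\dots,b_\ell\in\Pp_B$ that generate $B$ as a $\fld$-algebra. By the ``free point'' property of $D_{reg}$ recalled just before Proposition \ref{universal_objects}, each assignment $x_e\mapsto b_i$ extends uniquely to a morphism $\phi_i\colon D_{reg}\to B$ in $\ts_G$. The universal property of the coproduct $D_{reg}^{\otimes\ell}$ in $\fld{\tt alg}$ assembles the $\phi_i$ into a morphism $\Phi\colon D_{reg}^{\otimes\ell}\to B$; $G$-equivariance is preserved because of the diagonal action on tensor factors, and the image contains every $b_i$, so $\Phi$ is surjective in $\ts_G$. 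Applying s-projectivity of $B$ to the surjection $\Phi$ and to $\id_B$ produces a section $\sigma\colon B\to D_{reg}^{\otimes\ell}$ with $\Phi\circ\sigma=\id_B$.

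It then remains to observe that $D_{reg}^{\otimes\ell}$ is universal: given $R\in\ts_G$, universality of $D_{reg}$ furnishes $\ell$ morphisms $D_{reg}\to R$, and the coproduct property glues them to a single morphism $D_{reg}^{\otimes\ell}\to R$; precomposition with $\sigma$ gives $B\to R$ in $\ts_G$. The main thing to watch is that every construction remains inside $\ts_G$, which is guaranteed by the remark in the paper that $\otimes$ provides coproducts inside $\ts_G$ whenever all factors lie in $\ts_G$, and by the fact that $G$-equivariance of the $\phi_i$ propagates through the coproduct. No real obstacle arises; the argument is essentially the one used already in Lemma \ref{proj_gen_univ}, now relying on $B\in\ts_G^o$ rather than on $p\mid|G|$ to obtain a surjection onto $B$ from a suitable power of $D_{reg}$.
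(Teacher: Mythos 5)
Your proof is correct and follows essentially the same route as the paper: in the s-projective case you realize $B$ as a retract of a tensor power of $D_{reg}$ via a surjection built from finitely many generating points, and in the s-generator case you precompose the defining surjection $B^{\otimes\ell}\to R$ with a coproduct inclusion. The extra detail you supply (equivariance of the inclusions, universality of $D_{reg}^{\otimes\ell}$) is all sound, and your observation that the s-generator case does not need $B\in\ts_G^o$ matches the paper's ``clearly''.
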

\begin{proof}
Since $B$ is generated by a finite set of points, there is a surjective
morphism $D_{reg}(G)^{\otimes N}\to B$. If $B$ is s-projective, this map splits
and $B$ is a retract of the universal algebra $D_{reg}(G)^{\otimes N}$. Hence $B$ is universal.
Clearly if $B$ is an s-generator, then $B$ is universal.
\end{proof}

\begin{cor}\label{prj_retract of DregN}
$B\in\ts^o_G$ is s-projective if and only if $B$ is a retract of some
$D_{reg}(G)^{\otimes N}$.
\end{cor}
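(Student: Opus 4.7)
The ``only if'' direction is essentially the content of Lemma \ref{prj_gen_then_univ}. The plan is to observe that if $B\in\ts^o_G$, one may pick finitely many points $a_1,\dots,a_N\in B$ that generate $B$ as a $\fld$-algebra. Sending the element $x_e$ in the $i$th tensor factor of $D_{reg}(G)^{\otimes N}$ to $a_i$ defines, by the coproduct property of the tensor product in $\Galg$, a morphism $\theta\in\ts_G(D_{reg}(G)^{\otimes N},B)$ whose image contains every $a_i g$ and hence all of $B$. If $B$ is s-projective, applying the defining property of s-projectivity to the surjection $\theta$ and the identity $B\to B$ produces a splitting, which displays $B$ as a retract of $D_{reg}(G)^{\otimes N}$.

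For the ``if'' direction, the plan is to deduce that any retract of $D_{reg}(G)^{\otimes N}$ is s-projective from two purely categorical observations combined with Lemma \ref{D_is_s_proj}. First, coproducts of s-projective objects are s-projective: given a surjective $\theta\in\ts_G(A,B)$ and $\phi\in\ts_G(P_1\otimes P_2,B)$, restrict $\phi$ along the canonical embeddings $\iota_j\colon P_j\hookrightarrow P_1\otimes P_2$, lift each $\phi\circ\iota_j$ to $\psi_j\in\ts_G(P_j,A)$ using the s-projectivity of $P_j$, and glue the $\psi_j$ via the coproduct property to obtain $\psi\in\ts_G(P_1\otimes P_2,A)$; the uniqueness clause of the coproduct then forces $\theta\circ\psi=\phi$. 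Induction on $N$ yields that each $D_{reg}(G)^{\otimes N}$ is s-projective. Second, retracts of s-projective objects are s-projective: given a retract $\pi\colon P\to Q$ with section $i\colon Q\to P$, any $\phi\in\ts_G(Q,B)$ extends to $\phi\circ\pi\in\ts_G(P,B)$, which lifts through $\theta$ to some $\psi\in\ts_G(P,A)$, and then $\psi\circ i$ is the required lift of $\phi$.

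Combining these two principles with the s-projectivity of $D_{reg}(G)$ itself immediately gives that any retract of a tensor power $D_{reg}(G)^{\otimes N}$ is s-projective, closing the equivalence. I expect no real obstacle; the only point worth flagging is that the tensor product $\otimes_\fld$ genuinely serves as the coproduct in $\Galg$ (and restricts to the coproduct in $\ts_G$ on trace-surjective factors, as recorded in Section 2), which is exactly what makes the gluing step in the first principle work.
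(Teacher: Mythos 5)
Your proof is correct and follows essentially the same route as the paper: the ``only if'' direction is exactly the argument in the proof of Lemma \ref{prj_gen_then_univ} (a surjection $D_{reg}(G)^{\otimes N}\to B$ onto finitely many generating points, split by s-projectivity). The ``if'' direction, which the paper leaves implicit, you correctly supply via the standard categorical facts that coproducts and retracts of s-projective objects are s-projective, combined with Lemma \ref{D_is_s_proj}.
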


\begin{lemma}\label{standard_equiv}
Let $B\in\ts_G$ and $A\in\ts_H$ with $m=[G:H]$ coprime to $p={\rm char}(\fld)$.
Then the following hold:
\begin{enumerate}
\item $B$ is standard $\iff$ $B$ is cyclic and s-projective.
\item $A$ is standard $\Rightarrow$ $\tensind(A)$ is standard.
\end{enumerate}
\end{lemma}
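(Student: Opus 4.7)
The plan is to deduce part (2) from part (1) using the preservation results of Proposition \ref{preserved_props}, so the substantive work lies in part (1), which I would handle one direction at a time.

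For the direction ``standard $\Rightarrow$ cyclic and s-projective'', I would start from a decomposition $D_{reg}=B\oplus J$ with $J\unlhd D_{reg}$ a $G$-stable ideal (the defining condition of standard in Definition \ref{universal_df}). The projection $\pi:\ D_{reg}\to B$ along $J$ is a surjective morphism in $\ts_G$, so $B\cong D_{reg}/J$ is cyclic. For s-projectivity, $B$ is a retract of $D_{reg}$, which is s-projective by Lemma \ref{D_is_s_proj}; a one-line diagram chase shows that retracts of s-projective objects are s-projective in any category (given a surjection $f:\ Y\to Z$ and a morphism $g:\ B\to Z$, lift $g\circ\pi$ through $f$ to $\tilde g:\ D_{reg}\to Y$ and then precompose with the section $B\hookrightarrow D_{reg}$).

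For the converse, cyclicity gives a surjection $\theta:\ D_{reg}\twoheadrightarrow B$ in $\ts_G$, and s-projectivity applied to $\theta$ and $\id_B$ produces a section $\sigma:\ B\to D_{reg}$ with $\theta\circ\sigma=\id_B$. Then $\sigma(B)$ is a $G$-stable subalgebra of $D_{reg}$ isomorphic to $B$, $\ker(\theta)$ is a $G$-stable ideal, and the identity $x=\sigma\theta(x)+(x-\sigma\theta(x))$ together with the fact that $\sigma(b)\in\ker\theta$ forces $b=\theta\sigma(b)=0$ yields $D_{reg}=\sigma(B)\oplus\ker(\theta)$, so $B$ is standard.

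Part (2) is then a formality: if $A\in\ts_H$ is standard, by (1) applied in $\ts_H$ it is cyclic and s-projective; by Proposition \ref{preserved_props} parts (a) and (d), tensor induction preserves both properties under the hypothesis $p\nmid m$, so $\tensind(A)\in\ts_G$ is cyclic and s-projective; applying (1) in the converse direction in $\ts_G$ gives that $\tensind(A)$ is standard. I do not anticipate a genuine obstacle; the only points requiring a moment's care are the observation that retracts inherit s-projectivity and the direct-sum decomposition in the converse of (1), both routine.
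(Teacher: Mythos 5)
Your proposal is correct and follows essentially the same route as the paper: the converse of (1) is exactly the paper's argument (cyclicity gives a surjection $D_{reg}\to B$ which splits by s-projectivity), the forward direction — which the paper dismisses as "clear" — is filled in correctly via the retract/quotient observations, and part (2) is deduced from (1) and Proposition \ref{preserved_props} just as in the paper.
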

\begin{proof}
(1):\ ``$\Rightarrow$" is clear. ``$\Leftarrow$":\  Let $B$ be
cyclic and s-projective. Then $B=\fld[bg\ |\ g\in G]$ with point $b\in B$.
Hence there is a surjective morphism $D_{reg}(G)\to B$, which splits since
$B$ is s-projective. Therefore $B$ is standard.\\
(2):\ ``$\Rightarrow$": If $A$ is standard, then $A$ is cyclic and s-projective,
hence so is $\tensind(A)$ by Proposition \ref{preserved_props}. By (1), $\tensind(A)$
is standard.
\end{proof}

Let $V\in\fld G-mod$ and assume that $p\not |\ [G:H]$. Then $V$ is a direct summand of
$\ind\res(V)$. Since induced modules of projective $H$-modules are projective $G$-modules,
it follows that $V$ is projective in $\fld G-mod$ if and only if $\res(V)$ is projective in
$\fld H$-mod. The categorical ``reason" for this phenomenon is the fact that
the map
$$V\to \ind\res(V)=V_{|H}\otimes_{\fld H}\fld G,\ v\mapsto \frac{1}{m}\sum_{r\in\R} vr^{-1}\otimes_{\fld H} r\in Mod-\fld G$$
is a right inverse to the counit map $c^{(\ind,\res)}:\ \ind\res(V)\to V,\ v\otimes_{\fld H} r \mapsto vr$.
This motivates the following

\begin{df}\label{H_split}
An algebra $B\in\Galg$ or $\ts_G$ will be called $H$-split, if
the co-unit $c^{(\tensind,\res)}_B:\ \tensind(\res(B))\to B$
has a right inverse.
\end{df}

\begin{prp}\label{SV_H_split}
Let $V\in mod-\fld G$, $v\in V^G$ and assume that
$p={\rm char}(\fld)\not |\ m=[G:H]$. Consider the algebras
$S(V):={\rm Sym}(V)$ and $\overline{S(V)}_v:=S(V)/(v-1)\in\Galg$.
Then $S(V)$ and $\overline{S(V)}_v$ are $H$-split.
\end{prp}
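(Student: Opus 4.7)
My plan is to treat $S(V)$ first by an explicit construction, then derive the $\overline{S(V)}_v$ case by descent. For $S(V)$, since $\tensind(\res(S(V))) = S(V)^{\otimes\R}$ is a commutative $\fld$-algebra, the universal property of the symmetric algebra says that to produce a $\fld$-algebra map $\sigma : S(V) \to \tensind(\res(S(V)))$ it suffices to give a $\fld$-linear map on $V$. Using $1/m \in \fld$, I would define
$$\sigma_0: V \longrightarrow \tensind(\res(S(V))),\qquad v\longmapsto \frac{1}{m}\sum_{r\in\R}\bigl(t_{\widehat{vr^{-1}}}\bigr)\cdot r,$$
so that $(t_{\widehat{vr^{-1}}})\cdot r$ is the elementary tensor with $vr^{-1}\in V\subset S(V)$ in slot $r$ and $1$ in every other slot. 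That $\sigma_0$ is a $\fld$-linear section of the counit is then immediate from the formula $c(\otimes_r b_r) = \prod_r b_r\cdot r$ of Remark~\ref{comp_with_G-SET}(2): the telescope collapses to $\tfrac{1}{m}\sum_r (vr^{-1})\cdot r = \tfrac{1}{m}\sum_r v = v$.

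The key technical point is verifying that $\sigma_0$ is $G$-equivariant. Given $g\in G$, expanding $(t_{\widehat{vr^{-1}}})\cdot r\cdot g$ using the Frobenius-embedding identity $rg = \bar g(r)\cdot r^{\dot g}$, the factor $\bar g(r)\in H$ is absorbed into the $r$-th slot via the $H$-algebra structure on $\res(S(V))$, changing the slot entry to $vr^{-1}\bar g(r) = vg(r^{\dot g})^{-1}$; the remaining multiplication by $r^{\dot g}$ then moves this entry to slot $r^{\dot g}\in\R$. Reindexing the sum by $s := r^{\dot g}$ yields $\sigma_0(v)\cdot g = \sigma_0(vg)$. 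The unique $\fld$-algebra extension $\sigma: S(V)\to \tensind(\res(S(V)))$ is then automatically $G$-equivariant (both $g\mapsto\sigma(-\cdot g)$ and $g\mapsto\sigma(-)\cdot g$ are algebra homomorphisms agreeing on the generating space $V$) and a right inverse to $c^{(\tensind,\res)}_{S(V)}$ (both $c\circ\sigma$ and $\id$ are algebra homomorphisms agreeing on $V$).

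For $\bar B := \overline{S(V)}_v = S(V)/(v-1)$, I would descend $\sigma$ through the natural projection $\pi: S(V)\twoheadrightarrow \bar B$, whose tensor-induced map $\tensind(\res(\pi))$ is a surjective $G$-equivariant algebra homomorphism by Remark~\ref{comp_with_G-SET}(3) and satisfies the naturality square $\bar c\circ \tensind(\res(\pi)) = \pi\circ c$. It remains to check that $\tensind(\res(\pi))\circ\sigma$ vanishes on $v-1$: since $v\in V^G$ gives $vr^{-1}=v$ for every $r$, one has $\sigma(v) = \tfrac{1}{m}\sum_{r\in\R} (t_{\hat v})\cdot r$, and after applying $\tensind(\res(\pi))$ each slot entry $v$ becomes $\pi(v)=1$, so $\tensind(\res(\pi))(\sigma(v)) = \tfrac{1}{m}\sum_r 1 = 1 = \tensind(\res(\pi))(\sigma(1))$. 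The induced $G$-equivariant algebra map $\bar\sigma: \bar B \to \tensind(\res(\bar B))$ then satisfies $\bar c\circ\bar\sigma = \id_{\bar B}$ by naturality combined with surjectivity of $\pi$. The main obstacle throughout is the careful bookkeeping of the wreath-product $G$-action in step two, where one must keep track of how $\bar g(r)\in H$ acts \emph{inside} the $r$-th tensor factor while $\dot g$ simultaneously reindexes slots; once this is handled correctly, everything else is a formal consequence of universal properties and naturality.
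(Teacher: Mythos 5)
Your construction is correct and is essentially the paper's own proof: the section is the same averaged map $w\mapsto \frac{1}{m}\sum_{r\in\R} t_{\widehat{wr^{-1}}}\cdot r$, extended to $S(V)$ by the universal property and then descended through $S(V)\twoheadrightarrow S(V)/(v-1)$ by checking $v\mapsto 1$. The only cosmetic difference is that the paper obtains $G$-equivariance for free by recognizing this map as $S(\theta)$ for the standard $\fld G$-splitting $\theta: V\to \ind(\res(V))$ (using $\tensind(S(V))\cong S(\ind(V))$), whereas you verify the wreath-product equivariance by direct computation.
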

\begin{proof}
It follows from the definition of module induction and the universal property
of ${\rm Sym}(V)$ that $\tensind(S(V))=S(\ind(V))$. It is well known and can easily
be seen that $V$ is a direct summand of $\ind(\res(V))$ by the maps
$$\theta:\ V\to \ind(\res(V)),\ w\mapsto \frac{1}{m}\sum_{r\in\R} vr^{-1}\otimes r,\ {\rm and}$$
$\mu:\ \ind(\res(V))\to V,\ v\otimes r\mapsto vr$.
Let $S(\theta):\ S(V)\to S(\ind(\res(V)))\cong \tensind(S(\res(V)))$ be the induced map in
$\Galg$ and $c_{S(V)}:\ \tensind{S(V)}\to S(V)$ the counit. Then for every $w\in V$,
$S(\theta)(w)=\frac{1}{m}\sum_{r\in\R} t_{\widehat{wr^{-1}}}\cdot r$, hence
$c_{S(V)}\circ S(\theta)(w)=\frac{1}{m}\sum_{r\in\R} c_{S(V)}(t_{\widehat{wr^{-1}}}\cdot r)=$
$\frac{1}{m}\sum_{r\in\R} c_{S(V)}(t_{\widehat{wr^{-1}}})\cdot r=$
$\frac{1}{m}\sum_{r\in\R} (wr^{-1})\cdot r=w.$
Hence $c_{S(V)}\circ S(\theta)=\id_{S(V)}$.
Now let $can:\ S(V)\to \overline{S(V)}_v,\ x\mapsto \bar x$ be the canonical map, sending
$v\mapsto 1\in\overline{S(V)}_v$.
Then we obtain the commutative diagram:
\begin{diagram}\label{SV_split_diagram}
\tensind(S(V))&\rTo^{can^\otimes}&\tensind(\overline{S(V)}_v)\\
\uTo^{S(\theta)}\dTo_{c_{S(V)}}& &\uDashto^{\overline{S(\theta)}}\dTo_{c_{\overline{S(V)}_v}}\\
S(V)&\rTo^{can}&\overline{S(V)}_v\\
\end{diagram}
Indeed, as before we see $can^\otimes\circ S(\theta)(v)=$
$\frac{1}{m}\sum_{r\in\R} can^\otimes(t_{\widehat{vr^{-1}}}\cdot r)=$
$\frac{1}{m}\sum_{r\in\R} can^\otimes(t_{\widehat{v}})\cdot r=$
$\frac{1}{m}\sum_{r\in\R} t_{\widehat{\bar v}}\cdot r=$
$\frac{1}{m}\sum_{r\in\R} 1\cdot r=1$.
This shows the existence of $\overline{S(\theta)}\in\Galg$.
Clearly $\overline{S(\theta)}$ is a right inverse of $c_{\overline{S(V)}_v}$.
\end{proof}

The first part or the next result generalizes Lemma \ref{D_is_s_proj}:
\begin{thm}\label{SV_sproj}
Let $p={\rm char}(\fld)$, $1\ne H\in{\rm Syl}_p(G)$ and $V\in mod-\fld G$.
Then the following hold:
\begin{enumerate}
\item Let $v\in V^G$ with $v^{p^N}=\tr_G(f)$ or $v^{p^N}=\tr_H(f)$ for some $f\in S(V)$ and $N\in\mathbb{N}$.
Then $S(V)/(v-1)\in\ts_G$ is s-projective.
\item If $B\in\ts_G$, then $B\otimes (S(V)/(v-1))\cong B\otimes S(V/\fld v)$ as $B$-algebras.
\item If $B\in\ts_G$ is s-projective in $\ts_G$, then so are $B\otimes S(V)$ and $B\otimes (S(V)/(v-1))$.
\end{enumerate}
\end{thm}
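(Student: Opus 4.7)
My plan rests on one observation from Theorem~\ref{main}: every $A \in \ts_G$ is a projective $\fld G$-module (it decomposes as a sum of copies of $P(\fld)$ together with a projective complement), hence also $\fld G$-injective, because $\fld G$ is a self-injective (Frobenius) algebra for finite $G$. This injectivity is the engine that will produce all the required lifts, and it lets me separate the membership question (``is the algebra in $\ts_G$?'') from the lifting question (``does the morphism lift?'').

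For (1) I first check that $\bar B := S(V)/(v-1) \in \ts_G$: when $v^{p^N}=\tr_G(f)$, the quotient gives $1 = \tr_G(\bar f)$ directly, and when $v^{p^N}=\tr_H(f)$ with $H \in {\rm Syl}_p(G)$, the quotient gives $1 = \tr_H(\bar f)$, so $\bar B \in \ts_H$, and Proposition~\ref{local_galois} upgrades this to $\bar B \in \ts_G$. For s-projectivity I unwind the universal property of $\bar B$: a morphism $\bar B \to C'$ in $\ts_G$ is exactly a $\fld G$-linear map $\bar\phi : V \to C'$ with $\bar\phi(v) = 1$. Given a surjection $\psi : C \twoheadrightarrow C'$ in $\ts_G$, the kernel $K$ is a $\fld G$-summand of $C$ (since $C'$ is $\fld G$-projective by Theorem~\ref{main}), and therefore $K$ is itself $\fld G$-injective. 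A first application of this splitting $C \cong C' \oplus K$ lifts $\bar\phi$ to some $\tilde\phi : V \to C$; the residue $k_0 := \tilde\phi(v) - 1$ lies in $K^G$, and a second application of injectivity extends the $\fld G$-map $\fld v \to K$, $v \mapsto -k_0$, to a $\fld G$-map $\beta : V \to K$. The corrected map $\hat{\bar\phi} := \tilde\phi + \beta$ satisfies $\hat{\bar\phi}(v) = 1$ and $\psi\circ\hat{\bar\phi} = \bar\phi$, and the induced $\fld G$-algebra map $\bar B \to C$ is the desired lift in $\ts_G$.

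For (2) I trivialize the extension $0 \to \fld v \to V \to V/\fld v \to 0$ after tensoring with $B$. Since $B$ is $\fld G$-injective, the map $\fld v \to B$ sending $v \mapsto 1$ extends to a $\fld G$-linear $\phi : V \to B$; extending $B$-linearly yields a $B$-linear, $G$-equivariant retraction $V \otimes_\fld B \to B$, hence a splitting $V \otimes_\fld B \cong B\cdot v \oplus W$ with $W \cong (V/\fld v) \otimes_\fld B$ as $B$-modules with compatible $G$-action. Applying the symmetric $B$-algebra functor and using $S_B(M\oplus N)\cong S_B(M)\otimes_B S_B(N)$ gives $B \otimes S(V) \cong B[v] \otimes_\fld S(V/\fld v)$ as $\fld G$-algebras, and taking the quotient by $(v-1)$ produces the required $B$-algebra isomorphism $B \otimes (S(V)/(v-1)) \cong B \otimes S(V/\fld v)$.

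For (3) the s-projectivity of $B \otimes S(V)$ reduces to lifting the two tensor factors independently: any morphism $\phi : B \otimes S(V) \to C'$ in $\ts_G$ is encoded by $\phi|_B : B \to C'$ and a $\fld G$-linear $V \to C'$; the former lifts through $\psi$ by s-projectivity of $B$, while the latter lifts through any $\fld G$-module section of $\psi$ (which exists because $C'$ is $\fld G$-projective). Combining the two lifts via the universal properties of the symmetric algebra and the tensor product gives the required lift in $\ts_G$. The case $B \otimes (S(V)/(v-1))$ is now immediate from~(2). I expect the main obstacle of the whole theorem to be the constraint-matching in part~(1), namely forcing $v \mapsto 1_C$ rather than merely landing over $1_{C'}$; this is precisely what the two-step injectivity argument is designed to resolve, and it is worth emphasizing that the trace hypothesis on $v$ enters only to place $\bar B$ in $\ts_G$ and plays no role in the lifting step itself.
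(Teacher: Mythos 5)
Your parts (2) and (3) track the paper's own argument closely: for (2) the paper likewise uses Bass' theorem to extend $\fld v\to B$, $v\mapsto 1_B$ to a $\fld G$-map $\phi:V\to B$ and works with $u\mapsto u-\phi(u)$ (it then verifies injectivity of the resulting surjection $B\otimes S(V/\fld v)\to B\otimes S(V)/(v-1)$ by a noetherian argument, where you instead split $V\otimes B\cong Bv\oplus (V/\fld v)\otimes B$ first and apply the symmetric algebra functor over $B$ --- same mechanism, slightly cleaner packaging); for (3) the paper also lifts the two coproduct factors separately, using s-projectivity of $B$ for the $B$-factor and a $\fld G$-module splitting of the surjection for the $V$-factor. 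Part (1) is where you genuinely diverge. The paper reduces to the Sylow subgroup: it quotes Theorem 2.8 of \cite{universal} to get s-projectivity of $\res(S(V)/(v-1))$ in $\ts_H$ and then ascends to $G$ via the $H$-split property (Proposition \ref{SV_H_split}) together with the adjunction formalism (Proposition \ref{s-prj_cat}). You instead argue directly from the universal property: a morphism $S(V)/(v-1)\to C'$ in $\ts_G$ is a $\fld G$-map $V\to C'$ with $v\mapsto 1$, and since every object of $\ts_G$ is a projective, hence injective, $\fld G$-module by Theorem \ref{main}, the kernel $K$ of a surjection in $\ts_G$ is an injective $\fld G$-module; this lets you lift the linear map through a module section and then correct the value at $v$ by extending $v\mapsto -k_0$ with $k_0\in K^G$. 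This argument is correct and self-contained: it avoids the external reference entirely and in fact proves the sharper statement that $S(V)/(v-1)$ is s-projective whenever it lies in $\ts_G$, the trace hypothesis on $v$ entering only to secure membership (directly in the $\tr_G$ case, via Lemma \ref{ts_restrict} in the $\tr_H$ case). What the paper's route buys is a workout of the Frobenius-reciprocity/$H$-split machinery of Sections 3--4, which is then reused verbatim in Corollary \ref{split_proj_iff}; what your route buys is a shorter, more elementary proof of (1) that makes transparent exactly which hypothesis is responsible for which conclusion.
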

\begin{proof}
(1):\ Let $S:=S(V)/(v-1)$. If $v^{p^N}=\tr_G(f)$, then $v^{p^N}=\tr_H(f')$ for a suitable $f'\in S(V)$,
so we can assume $v^{p^N}=\tr_H(f)$ with $v\in V^G$. It follows from \cite{universal} Theorem 2.8 that $\res(S)$
is s-projective in $\ts_H$. Therefore $S\in\ts_G$ by Lemma \ref{ts_restrict} and
$S$ is s-projective in $\ts_G$ by Proposition \ref{s-prj_cat}, because
$S$ is $H$-split by Proposition \ref{SV_H_split}. Note that $\res$ clearly respects surjective maps.\\
(2):\ It follows from Bass' theorem that $B$ is an injective $\fld G$-module. Hence
the embedding $\fld v\hookrightarrow B,\ \lambda v\mapsto\lambda\cdot 1_B$ extends firstly to
$\phi\in\Hom_{\fld G}(V,B)$ and then to a map in $\Hom_{\fld G}(V,B\otimes S(V))$, sending  $u\in V$ to
$u-\phi(u)$, hence $v$ to $v-1$. The induced algebra-morphisms $S(V/\fld v)\to B\otimes S(V)/(v-1)$ and
$B\hookrightarrow  B\otimes S(V)/(v-1)$ induce a coproduct morphism
$\tilde\phi:\ B\otimes S(V/\fld v)\to B\otimes S(V)/(v-1)$, which is surjective in $\ts_G$ with
$\tilde\phi_{|B}=\id_B$. Since $S(V/\fld v)\cong\fld[X_1,\cdots,X_\ell]\cong S(V)/(v-1)$ with $\ell=\dim_\fld(V)-1$,
the algebras $B\otimes S(V)/(v-1)$ and $B\otimes S(V/\fld v)$ are isomorphic and
there is a morphism of $B$-algebras, $\psi:\ B\otimes S(V)/(v-1)\to B\otimes S(V/\fld v)$ with
$\tilde\phi\circ\psi=\id_{B\otimes S(V)/(v-1)}$. Hence
$B\otimes S(V/\fld v)\cong X\oplus\ I$ with ideal $I=\ker(\tilde\phi)$ and
unital subring $X\cong B\otimes S(V)/(v-1)\cong B\otimes S(V/\fld v)$.
It follows that $I=0$, since the noetherian ring $B\otimes S(V/\fld v)$ cannot be
isomorphic to a proper quotient.
We conclude that $\tilde\phi$ is an isomorphism.\\
(3):\ Let $\alpha:\ A\to A'\in \ts_G$ and $\beta:\ B\otimes S(V)\to A'$ be
morphisms in $\ts$ with $\alpha$ surjective. Since $B\otimes V$ is projective in $Mod-\fld G$, there exists
$\chi\in\Hom_{\fld G}(B\otimes V,A)$ with $\alpha\circ\chi=\beta_{|B\otimes V}$; since
$B$ is s-projective there is $\theta\in\ts_G(B,A)$ with
$\alpha\theta=\beta_{|B}$. Let ${\mathcal V}:=\{v_1,\cdots,v_\ell\}$ be a $\fld$-basis of $V$, then
we define the $\fld$-algebra morphism
$$\tilde\chi:\ B\otimes S(V)\to A,\
\sum_{\mu\in\mathbb{N}_0^\ell}b_\mu\underline v^\mu\mapsto
\sum_{\mu\in\mathbb{N}_0^\ell}\theta(b_\mu)\chi(1_B\otimes v_1)^{\mu_1}\cdots \chi(1_B\otimes v_\ell)^{\mu_\ell}.$$
Since $\alpha\circ\tilde\chi(1_B\otimes v_i)=\alpha\circ\chi(1_B\otimes v_i)=\beta(1_B\otimes v_i)$ and
$\alpha\circ\tilde\chi_{|B}=\alpha\circ\theta=\beta_{|B}$ it follows that
$\alpha\circ\tilde\chi=\beta$. Since $\tilde\chi((1_B\otimes v_i)g)=$
$\chi((1_B\otimes v_i)g)=$ $\chi(1_B\otimes v_i)g=$
$\tilde\chi(1_B\otimes v_i)g$
for all $i=1,\cdots,\ell$ and
$\tilde\chi(bg)=$ $\theta(bg)=$ $\theta(b)g=$ $\tilde\chi(b)g$,
we conclude that $\tilde\chi\in\ts_G(B\otimes S(V))$. This shows that
$B\otimes S(V)$ is s-projective. For $B\otimes (S(V)/(v-1))$ the claim follows from (2).
\end{proof}

\begin{cor}\label{split_proj_iff}
Let $p={\rm char}(\fld)\not |\ [G:H]$ and $B\in\ts_G$ be an $H$-split algebra. Then $B$ is s-projective in $\ts_G$ if and only if
$\res(B)$ is s-projective in $\ts_H$.
\end{cor}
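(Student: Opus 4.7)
The plan is to exploit the two adjoint pairs $(\tensind_{|\ts_H},\res_{|\ts_G})$ and $(\res_{|\ts_G},\ind_{|\ts_H})$ supplied by Proposition \ref{tens_ind_ts}(1), combined with the categorical principle (Proposition \ref{s-prj_cat}(1) of the Appendix, already used in Proposition \ref{preserved_props}(a)) that the left adjoint in an adjunction preserves s-projective objects whenever the right adjoint preserves surjective morphisms.

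For the direction ``$B$ s-projective in $\ts_G$ $\Rightarrow$ $\res(B)$ s-projective in $\ts_H$'', I would apply that principle to the pair $(\res,\ind)$. The only thing to verify is that $\ind$ preserves surjections in our $\ts$-subcategories: this is immediate from the explicit description $\ind(A)=A^{\times\R}=\prod_{r\in\R}A^{(r)}$, since $\ind(\alpha)$ acts coordinate-wise as $\alpha$ and is therefore surjective whenever $\alpha$ is (the targets lie in $\ts_G$ by Proposition \ref{tens_ind_ts}(2)). Consequently its left adjoint $\res$ sends s-projective objects to s-projective objects, giving the implication. Observe that the $H$-split hypothesis is not needed for this direction.

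For the reverse direction I would instead use the other adjunction $(\tensind,\res)$. Here $\res$ trivially preserves surjections (restriction of the $G$-action does not alter the underlying $\fld$-algebra homomorphism), so by the same categorical principle $\tensind$ sends s-projectives to s-projectives. Thus, if $\res(B)$ is s-projective in $\ts_H$, the algebra $\tensind(\res(B))$ is s-projective in $\ts_G$. This is where the $H$-split hypothesis intervenes in an essential way: by Definition \ref{H_split} the counit $c^{(\tensind,\res)}_B\colon\tensind(\res(B))\to B$ admits a section in $\ts_G$, exhibiting $B$ as a retract of the s-projective object $\tensind(\res(B))$. A routine verification (pre-compose with the retraction, lift using the s-projectivity of the total space, then post-compose with the section) shows that a retract of an s-projective object is s-projective, so $B$ is s-projective in $\ts_G$.

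The argument is thus essentially formal; the only concrete verification required is that $\ind$ preserves surjections, which is immediate from its product description, and the only conceptual input is recognising that the $H$-split hypothesis is precisely the retract condition that makes the reverse direction go through. The parallel with ordinary $\fld G$-module theory, where $V$ is a summand of $\ind\res(V)$ whenever $p\nmid[G:H]$, is explicit; the $H$-split hypothesis is the algebra-level substitute for the module-theoretic averaging that is no longer available inside $\ts_G$.
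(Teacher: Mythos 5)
Your proof is correct, and one half of it takes a genuinely different route from the paper's. For the direction that uses the $H$-split hypothesis ($\res(B)$ s-projective $\Rightarrow$ $B$ s-projective), your argument --- $\tensind$ preserves s-projectives because its right adjoint $\res$ preserves surjections, the counit splitting exhibits $B$ as a retract of $\tensind(\res(B))$, and a retract of an s-projective is s-projective --- is precisely the content of Proposition \ref{s-prj_cat}\,(2), which is what the paper cites; here you have simply unfolded that appeal. The real divergence is in the other direction. The paper argues concretely: an s-projective $B$ is a retract of $D_{reg}(G)^{\otimes N}$ (Corollary \ref{prj_retract of DregN}), and $\res(D_{reg}(G))=\overline{S(V_{reg|H})}$ is s-projective in $\ts_H$ by Theorem \ref{SV_sproj}, so $\res(B)$ is a retract of an s-projective object. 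You instead invoke the second adjunction $(\res,\ind)$ supplied by Proposition \ref{tens_ind_ts}\,(1) and apply Proposition \ref{s-prj_cat}\,(1) with $L=\res$ and $F=\ind$, the only verification being that $\ind$ preserves surjections, which is indeed immediate from the coordinatewise action of $\ind(\alpha)$ on $A^{\times\R}$. Your route is purely formal and symmetric between the two directions, and it sidesteps the retract characterization of s-projectives, which in the paper rests on $B$ being generated by points (hence implicitly on $p$ dividing $|G|$) and on the machinery of Theorem \ref{SV_sproj}; what the paper's route buys in exchange is the explicit identification of $\res(D_{reg}(G))$ as a dehomogenized symmetric algebra, an object it reuses elsewhere. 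Your remark that the $H$-split hypothesis enters only in the ``if'' direction agrees with the paper's proof.
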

\begin{proof} Assume that $B\in\ts_G$ is $H$-split. Then by Proposition \ref{s-prj_cat} $B$ is s-projective,
if $\res(B)\in\ts_H$ is. If $B$ is s-projective, then $B$ is a retract of $D_{reg}(G)^{\otimes N}$.
Since $D_{reg}(G)=\overline{S(V_{reg})}$ with $V_{reg}:=\fld G_{\fld G}$ and suitable $v\in V^G$, it follows from
Theorem \ref{SV_sproj} (1) that $\res(D_{reg}(G))=\overline{S(V_{reg|H})}$ is s-projective in $\ts_H$.
\end{proof}

\section{A version of Maschke's theorem}\label{Maschke-section}

\begin{lemma}\label{standard alg_decomp}
Let $\fld$ be an arbitrary field and let $A,B$ be connected
$\mathbb{N}_0$-graded $\fld$ algebras, generated in degree one (i.e. $A=\fld[A_1]$ and
$B=\fld[B_1]$). Let $\phi:\ A\to B$ be a
(not necessarily graded) homomorphism of $\fld$-algebras.
Define $\phi_i:=\pi_i\circ\phi$, where
$\pi_i:\ B\mapsto B_i$ is the projection onto the homogeneous
component of degree $i$. Then
$$\phi(A)\cap B_1\subseteq \phi_1(A)\subseteq \phi_1(A_1).$$
In particular, if $\phi$ is surjective, then
$B_1=\phi_1(A_1)$.
\end{lemma}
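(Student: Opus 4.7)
The plan is to exploit the hypothesis $A=\fld[A_1]$ combined with the connectedness $A_0=\fld=B_0$ to reduce the claim to a short computation with the degree-$1$ component of a product in $B$.

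First I would write an arbitrary element of $A$ as $a=\lambda\cdot 1_A+\sum_{i\ge 1}a_i$ with $\lambda\in\fld$ and $a_i\in A_i$, and note that since $\phi$ is unital, $\phi(\lambda\cdot 1_A)\in B_0$, so $\phi_1(\lambda\cdot 1_A)=0$. It therefore suffices to show that $\phi_1(a_i)\in\phi_1(A_1)$ for each $i\ge 1$. The case $i=1$ is trivial. For $i\ge 2$, because $A=\fld[A_1]$, write $a_i=\sum_{j}x^{(j)}_1 x^{(j)}_2\cdots x^{(j)}_i$ with $x^{(j)}_k\in A_1$. Apply $\phi$ and decompose each $\phi(x^{(j)}_k)=\phi_0(x^{(j)}_k)+\phi_1(x^{(j)}_k)+(\text{higher terms})$, noting $\phi_0(x^{(j)}_k)\in B_0=\fld$.

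The main step is then the computation of the degree-$1$ part of the product $\phi(x^{(j)}_1)\cdots\phi(x^{(j)}_i)$: it is obtained by choosing exactly one factor to contribute its degree-$1$ piece while all the others contribute their degree-$0$ pieces, yielding
\[
\pi_1\bigl(\phi(x^{(j)}_1)\cdots\phi(x^{(j)}_i)\bigr)
=\sum_{\ell=1}^{i}\Bigl(\prod_{k\ne\ell}\phi_0(x^{(j)}_k)\Bigr)\cdot\phi_1(x^{(j)}_\ell),
\]
which is a $\fld$-linear combination of elements of $\phi_1(A_1)$, hence lies in $\phi_1(A_1)$. Summing over $j$ gives $\phi_1(a_i)\in\phi_1(A_1)$ as required, proving $\phi_1(A)\subseteq\phi_1(A_1)$.

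For the remaining inclusion $\phi(A)\cap B_1\subseteq\phi_1(A)$, I would take $b\in\phi(A)\cap B_1$, write $b=\phi(a)$ for some $a\in A$, and simply observe $\phi_1(a)=\pi_1(\phi(a))=\pi_1(b)=b$ since $b$ is homogeneous of degree $1$. The final assertion under surjectivity is then immediate: given $b\in B_1$, surjectivity puts $b$ in $\phi(A)\cap B_1$, so the chain of inclusions forces $B_1\subseteq\phi_1(A_1)\subseteq B_1$. I expect the only mildly delicate point to be the product expansion in the main step, which relies precisely on $B_0=\fld$; everything else is bookkeeping with graded components.
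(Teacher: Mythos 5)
Your proposal is correct and follows essentially the same route as the paper: both arguments expand $\phi$ of a product of degree-one generators, observe that the degree-one component of the product arises from exactly one factor contributing its $\phi_1$-part while the others contribute scalars from $B_0=\fld$, and conclude that $\phi_1(A)$ lands in the $\fld$-span of $\phi_1(A_1)$. The remaining inclusions and the surjectivity statement are handled by the same trivial observations in both versions.
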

\begin{proof}
Let $\{x_1,x_2,\cdots,\}\subseteq A_1$ be an $\fld$-basis of $A_1$,
then $A=\fld[x_1,x_2,\cdots]$ and every $a\in A$ is an $\fld$-linear combination
of monomials $x^\alpha:=\prod_{i=1}^Nx_i^{\alpha_i}$ with
$N\in\mathbb{N}$ and $\alpha\in\mathbb{N}_0^N$.
Since $\phi(x_i)=\sum_{j\in\mathbb{N}_0} \phi_j(x_i)$, we have
$\phi(x^\alpha)=\prod_{i=1}^N(\sum_{j\in\mathbb{N}_0} \phi_j(x_i))^{\alpha_i}$.
Since $B_{\ge 2}\in\ker(\pi_1)$:
$$\pi_1\phi(x^\alpha)=\pi_1[\prod_{i=1}^N(\phi_0(x_i))^{\alpha_i}+
{\alpha_i}\phi_0(x_i)^{\alpha_i-1}\phi_1(x_i)]=$$
$$\sum_{s=1}^N \pi_1[(\prod_{i=1\atop i\ne s}^N\phi_0(x_i)^{\alpha_i})\cdot\alpha_s\cdot
\phi_0(x_s)^{\alpha_s-1}\phi_1(x_s)]\in \langle \phi_1(x_s)\ |\ s\in\mathbb{N}\rangle_\fld,$$
because $(\prod_{i=1\atop i\ne s}^N\phi_0(x_i)^{\alpha_i})\cdot\alpha_s\cdot
\phi_0(x_s)^{\alpha_s-1}\in B_0=\fld$.
\end{proof}

\begin{cor}\label{standard alg_decomp_cor}
Let $\phi:\ A\to B$ be as in \ref{standard alg_decomp}, let $G$ be a group
acting on $A$ and $B$ by graded algebra automorphisms and assume that
$\phi$ is $G$-equivariant. Then $\phi_{1|A_1}\in \Hom_{\fld G}(A_1,B_1)$.
If $\phi$ is surjective, then so is $\phi_{1|A_1}$.
\end{cor}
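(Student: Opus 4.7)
The plan is to verify the two assertions almost directly from the definitions, using the hypothesis that the $G$-action is by \emph{graded} automorphisms together with the preceding lemma.

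First I would observe that, since $G$ acts on $B$ by graded automorphisms, each homogeneous component $B_i$ is a $\fld G$-submodule of $B$, and the vector-space decomposition $B=\oplus_i B_i$ is a decomposition of $\fld G$-modules. Consequently every projection $\pi_i:B\to B_i$ is a $\fld G$-module homomorphism. The map $\phi$ is $G$-equivariant by assumption and $\fld$-linear (being an algebra homomorphism), so the composition $\phi_i=\pi_i\circ\phi$ is a $\fld G$-module homomorphism from $A$ to $B_i$. The same reasoning applied to $A$ shows that $A_1$ is a $\fld G$-submodule of $A$, so the restriction $\phi_{1|A_1}:A_1\to B_1$ is a morphism in $\fld G\text{-mod}$, establishing the first claim.

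For the surjectivity statement I would simply invoke Lemma \ref{standard alg_decomp}: if $\phi$ is surjective then $B_1=\phi(A)\cap B_1\subseteq\phi_1(A_1)\subseteq B_1$, which forces $\phi_1(A_1)=B_1$, i.e.\ $\phi_{1|A_1}$ is surjective.

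Since every step is an immediate consequence of either the hypothesis or Lemma \ref{standard alg_decomp}, I do not anticipate any real obstacle; the only point requiring a sentence of care is the observation that graded automorphisms make the projections $\pi_i$ into $\fld G$-module maps, which is what allows one to transfer $G$-equivariance from $\phi$ to each component $\phi_i$.
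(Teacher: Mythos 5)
Your proof is correct and follows essentially the same route as the paper: graded action makes each $B_i$ a $\fld G$-submodule so the projections $\pi_i$ (hence each $\phi_i$) are $G$-equivariant, and surjectivity of $\phi_{1|A_1}$ follows from the inclusion $\phi(A)\cap B_1\subseteq\phi_1(A_1)$ of Lemma \ref{standard alg_decomp}. No gaps.
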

\begin{proof} Since the $G$-action is graded, all $A_i$ and $B_i$ are $\fld G$-subspaces
and each $\phi_i$ is $G$-equivariant. This implies the first claim.
If $\phi$ is surjective, then $B_1=B_1\cap\phi(A)\subseteq \phi_1(A_1)$, hence
$\phi_{1|A_1}$ is surjective.
\end{proof}

\begin{cor}\label{standard alg_decomp_cor2}
Let $A=\fld[A_1]$, $B=\fld[B_1]$ be graded connected $\fld$-algebras as in \ref{standard alg_decomp_cor} (with graded $G$ action) and let $V,W$ be two finite dimensional $\fld G$-modules with corresponding
symmetric $\fld$-$G$-algebras $S(V):={\rm Sym}(V)$ and $S(W):={\rm Sym}(W)$.
Then the following hold:
\begin{enumerate}
\item $S(V)\cong S(W)$ as $\fld$-$G$-algebras $\iff$ $V\cong W\in \fld G$-mod.
\item $S(V)\cong A\otimes B$ $\iff$ $V\cong A_1\oplus B_1$ with polynomial rings
$A\cong S(A_1)$ and $B\cong S(B_1)$.
\end{enumerate}
\end{cor}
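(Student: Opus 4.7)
The plan is to reduce both parts to Corollary \ref{standard alg_decomp_cor}. For part (1) the direction ``$\Leftarrow$'' is immediate from the functoriality of the symmetric algebra construction on $\fld G$-modules. For ``$\Rightarrow$'', given an isomorphism $\phi:\ S(V)\to S(W)$ in $\Galg$, I would apply Corollary \ref{standard alg_decomp_cor} to $\phi$: both algebras are connected $\mathbb{N}_0$-graded, generated in degree one, and carry graded $G$-actions, so the hypotheses are satisfied and one obtains a surjective $\fld G$-homomorphism $\phi_{1|V}:\ V=S(V)_1\to S(W)_1=W$. The same corollary applied to $\phi^{-1}$ yields a surjective $\fld G$-map $W\to V$, and finite-dimensionality of $V$ and $W$ forces both to be isomorphisms.

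For the direction ``$\Leftarrow$'' of part (2), functoriality again gives $S(V)\cong S(A_1\oplus B_1)\cong S(A_1)\otimes S(B_1)\cong A\otimes B$ as $\fld G$-algebras. For ``$\Rightarrow$'', let $\phi:\ S(V)\to A\otimes B$ be a $\fld G$-algebra isomorphism. Since $A\otimes B$ is also connected $\mathbb{N}_0$-graded and generated in degree one, with $(A\otimes B)_1=A_1\oplus B_1$ carrying a graded $G$-action, Corollary \ref{standard alg_decomp_cor} applied to $\phi$ and to $\phi^{-1}$ produces surjective $\fld G$-homomorphisms between $V$ and $A_1\oplus B_1$ in both directions. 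Finite-dimensionality of $V$ then forces $A_1$ and $B_1$ to be finite-dimensional as well and gives $V\cong A_1\oplus B_1$ in $\fld G$-mod.

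What remains is to verify that $A\cong S(A_1)$ and $B\cong S(B_1)$. Forgetting the $G$-action, $A\otimes_\fld B\cong S(V)$ is a polynomial ring in $n=\dim_\fld V$ variables, hence a domain of Krull dimension $n$. Since $B$ is $\fld$-free, the embedding $a\mapsto a\otimes 1$ exhibits $A$ as a subring of a domain, so $A$ is itself a domain; symmetrically for $B$. Both are finitely generated $\fld$-algebras (generated by the finite-dimensional spaces $A_1$, $B_1$), so Noether normalization gives $\Dim(A)+\Dim(B)=\Dim(A\otimes_\fld B)=n=\dim_\fld A_1+\dim_\fld B_1$, while the canonical graded surjections $S(A_1)\twoheadrightarrow A$ and $S(B_1)\twoheadrightarrow B$ force $\Dim(A)\le\dim_\fld A_1$ and $\Dim(B)\le\dim_\fld B_1$. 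Both inequalities must therefore be equalities, so the kernel of $S(A_1)\twoheadrightarrow A$ is a homogeneous prime of height zero in the domain $S(A_1)$ and must vanish; the same argument handles $B$. The one mildly tricky step in the whole proof is precisely this Krull-dimension/height-zero argument, which upgrades $A=\fld[A_1]$ from an arbitrary graded algebra generated in degree one to a genuine polynomial ring.
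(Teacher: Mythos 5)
Your proof is correct and follows essentially the same route as the paper: both parts reduce to Lemma \ref{standard alg_decomp} / Corollary \ref{standard alg_decomp_cor} and then use the Krull-dimension count $\Dim(A)+\Dim(B)=\Dim(S(V))=\dim_\fld V$ together with $\Dim(A)\le\dim_\fld A_1$, $\Dim(B)\le\dim_\fld B_1$ to force $A\cong S(A_1)$ and $B\cong S(B_1)$. The only (harmless) differences are that you obtain $V\cong A_1\oplus B_1$ by applying the corollary to both $\phi$ and $\phi^{-1}$ rather than by comparing dimensions, and that you spell out the height-zero-prime argument which the paper leaves implicit in the step ``$\Dim(A)=\dim_\fld(A_1)$, so $A\cong S(A_1)$.''
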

\begin{proof}
(1):\ ``$\Leftarrow$" is clear. \\
``$\Rightarrow$":\ Let $\phi:\ S(V)\to S(W)$ be a $G$-equivariant algebra isomorphism.
Then $W=(S(W)_1)$ and $V=(S(V))_1$, hence by \ref{standard alg_decomp},
$W=\phi_1(V)$ with $\fld G$-homomorphism $\phi_1$. Since
$\dim_\fld(W)=\Dim(S(W))=\Dim(S(V))=\dim_\fld(V)$, it follows that
$\phi_1$ is an isomorphism.\\
(2):\ From \ref{standard alg_decomp} we obtain: $A_1\oplus B_1=(A\otimes B)_1\subseteq \phi_1(V)$.
Clearly $\Dim(A)\le \dim_\fld(A_1)$ and $\Dim(B)\le \dim_\fld(B_1)$
hence
$$\dim_\fld(V)=\Dim(S(V))=\Dim(A)+\Dim(B)\le\dim_\fld(A_1)+\dim_\fld(B_1)\le \dim_\fld(V).$$
This shows that $\Dim(A)=\dim_\fld(A_1)$ and $\Dim(B)=\dim_\fld(B_1)$, so
$A\cong S(A_1)$, $B\cong S(B_1)$ and $A\otimes B\cong S(A_1\oplus B_1)$.
From (1) we obtain $V\cong A_1\oplus B_1$.
\end{proof}

We will use the following lemma, which is well known in representation theory.

\begin{lemma}\label{tens_projective_bites}
Let $V\in \Mod-\fld G$ and $M\in\Mod-\fld H$ with $H\le G$. Then the following hold:
\begin{enumerate}
\item $\ind(M)\otimes V\cong \ind(M\otimes\res(V))\in Mod-\fld G$.
\item For $U\le V\in Mod-\fld G$ and $P\in Mod-\fld G$ projective:
$P\otimes V\cong P\otimes (U\oplus V/U)$.
\end{enumerate}
\end{lemma}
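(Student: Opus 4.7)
The plan is to prove the two parts by the standard ``projection formula'' argument for (1) and a splitting argument based on the fact that $\fld G$ is a Hopf algebra (equivalently, that tensor products with projective modules are projective) for (2).

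For part (1), I would identify $\ind(M)=M\otimes_{\fld H}\fld G$ as a right $\fld G$-module and write down the candidate isomorphism
\[
\Phi:\ \ind(M)\otimes_\fld V\to \ind(M\otimes_\fld\res(V)),\qquad (m\otimes_{\fld H}g)\otimes v\longmapsto (m\otimes vg^{-1})\otimes_{\fld H}g,
\]
together with its proposed inverse
\[
\Psi:\ (m\otimes v)\otimes_{\fld H}g\longmapsto (m\otimes_{\fld H}g)\otimes vg.
\]
The work then reduces to three routine verifications: $\Phi$ and $\Psi$ are well defined on the balanced tensor (both sides change by the same $\fld H$-relation under $m\leftrightarrow mh$, $g\leftrightarrow h^{-1}g$, $v\leftrightarrow vh^{-1}$, because $M\otimes\res(V)$ carries the diagonal $H$-action); $\Phi$ is right-$G$-equivariant, where on the source the $G$-action is diagonal in the two factors and on the target the $G$-action sits in the $\fld G$-tensorand of the induction; and $\Phi\circ\Psi$ and $\Psi\circ\Phi$ are the identity on elementary tensors. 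None of these steps requires more than the defining relations of the two tensor products.

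For part (2), I would start from the short exact sequence $0\to U\to V\to V/U\to 0$ in $Mod\text{-}\fld G$ and tensor with $P$ over $\fld$. Since $\fld$ is a field, the resulting sequence
\[
0\to P\otimes U\to P\otimes V\to P\otimes (V/U)\to 0
\]
is exact in $Mod\text{-}\fld G$. To conclude that it splits, I would invoke the classical fact that $P\otimes_\fld Y$ is projective for any $Y\in Mod\text{-}\fld G$ when $P$ is projective: first prove this for $P=\fld G$ via the standard ``untwisting'' isomorphism $\fld G\otimes Y\to \fld G\otimes Y^{\mathrm{triv}}$, $g\otimes y\mapsto g\otimes yg^{-1}$, which exhibits $\fld G\otimes Y$ as a free $\fld G$-module on any $\fld$-basis of $Y$; then pass to summands of free modules. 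Applied to $P\otimes(V/U)$, projectivity forces the displayed sequence to split, giving $P\otimes V\cong (P\otimes U)\oplus(P\otimes (V/U))\cong P\otimes(U\oplus V/U)$, where the last isomorphism uses the distributivity of $\otimes_\fld$ over direct sums.

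Neither part presents a serious obstacle; the main bookkeeping care is in part (1), where one has to keep track of which tensor is over $\fld$ and which is over $\fld H$, and verify that the candidate map $\Phi$ really descends to the balanced tensor. The projectivity input in part (2) is the only non-formal ingredient, and it is entirely standard.
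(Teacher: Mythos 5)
Your part (1) is exactly the paper's argument: the same mutually inverse maps $(m\otimes_{\fld H}g)\otimes v\mapsto (m\otimes vg^{-1})\otimes_{\fld H}g$ and $(m\otimes v)\otimes_{\fld H}g\mapsto(m\otimes_{\fld H}g)\otimes vg$, with the same routine checks of balancedness and $G$-equivariance. For part (2) you take a genuinely different (and in one respect more careful) route. The paper reduces to $P\cong\fld G$ by writing $P\oplus X$ as a free module, and then applies part (1) with $H=1$ to both $V$ and $U\oplus V/U$: since $\mathrm{Ind}_1^{\uparrow G}(\fld\otimes{\rm res}(V_{|1}))$ depends only on the underlying $\fld$-vector space of $V$, and $V_{|1}\cong U_{|1}\oplus(V/U)_{|1}$, the two tensor products coincide. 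You instead tensor the short exact sequence $0\to U\to V\to V/U\to 0$ with $P$ over the field $\fld$ and split it by showing $P\otimes(V/U)$ is projective, which you derive from the same untwisting isomorphism $\fld G\otimes Y\cong\fld G\otimes Y^{\mathrm{triv}}$ (i.e.\ part (1) with $H=1$) followed by passage to direct summands. The key computation is therefore identical in both treatments; what your version buys is that it sidesteps the paper's reduction step ``we can assume $P\cong\fld G$,'' which as stated silently requires cancelling the complement $X$ from $(P\oplus X)\otimes V\cong(P\oplus X)\otimes(U\oplus V/U)$, whereas your splitting argument works for an arbitrary projective $P$ directly. Both proofs are correct.
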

\begin{proof}
(1) the map
$\ind(M)\otimes V\to \ind(M\otimes\res(V)),\ m\otimes_Hr\otimes v\mapsto (m\otimes v r^{-1})\otimes_Hr$
is easily seen to be $G$-equivariant with the inverse
$m\otimes\ v\otimes_H r\mapsto (m\otimes_H r)\otimes vr$.\\
(2) Since $P\oplus X\cong \oplus_{i\in I} \fld G_{\fld G}^{(i)}$, we can assume that $P\cong\fld G$.
But then $\fld G\otimes V\cong {\rm Ind}_1^{\uparrow G}(\fld)\otimes V\cong$
${\rm Ind}_1^{\uparrow G}(\fld\otimes {\rm res}(V_{|1}))\cong $
${\rm Ind}_1^{\uparrow G}(\fld\otimes {\rm res}(U_{|1})\oplus {\rm res}(V/U_{|1}))\cong $
${\rm Ind}_1^{\uparrow G}(\fld\otimes {\rm res}(U_{|1}))\oplus
{\rm Ind}_1^{\uparrow G}(\fld\otimes {\rm res}(V/U_{|1}))\cong  $
$(P\otimes U)\oplus (P\otimes V/U)\cong
P\otimes (U\oplus V/U)$.
\end{proof}
Note that
(2) can be viewed as a generalized version of Maschke's theorem (taking
$P\cong\fld$ if $p\not |\ |G|$). Let $A\in\Galg$ and $\phi:\ V\to W\in Mod-\fld G$
a homomorphism of $\fld G$-modules. Then the $\fld G$-homomorphism
$V\to A\otimes S(W),\ v\mapsto 1_A\otimes \phi(v)$ extends to a morphism
$S(\phi)\in\Galg(S(V),A\otimes S(W))$. Together with the canonical embedding
$A\hookrightarrow A\otimes S(W)$, we obtain a coproduct morphism
$A\otimes\phi:\ A\otimes S(V)=A\coprod S(V)\to A\otimes S(W)$.
Hence $A\in\Galg$ induces a functor $Mod-\fld G\to \Galg,\ V\mapsto A\otimes S(V)$.
If $A\in\ts_G$, then $A\otimes ?$ is indeed a functor from $Mod-\fld G\to\ts_G$,
This reflects the well known fact from representation theory that the tensor product
of an arbitrary $\fld G$-module with a projective one is again projective.
On the other hand, $A\otimes S(V)$ can also be viewed as an $A$-algebra.
With $\GAalg$ we will denote the full subcategory of $\Galg$ consisting
of objects $B\in\Galg$ that contain $A$ as $\fld-G$ subalgebra.
Then $A\otimes ?$ is a functor from $Mod-\fld G\to \GAalg$

\begin{prp}\label{biting_off_constitutents}
For any $A\in\ts_G$ then the following hold:
\begin{enumerate}
\item If $W\le V\in Mod-\fld G$, then $A\otimes S(V)\cong A\otimes S(W)\otimes S(V/W)\in\GAalg$.
\item If $V\in mod-\fld G$, then $A\otimes S(V)\cong A\otimes S(V_{ss})\in\GAalg$, where
$V_{ss}$ is the direct sum of the simple components of $V$, including multiplicities.
\item The functor $A\otimes:\ Mod-\fld G\to\GAalg,\ V\mapsto A\otimes S(V)$ is split exact,
i.e. it maps short exact sequences to coproducts in $\GAalg$.
\item The functor $A\otimes$ induces a map from the Grothendieck group
of $Mod-\fld G$ to the set of isomorphism classes of $\GAalg$.
\end{enumerate}
\end{prp}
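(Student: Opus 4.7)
The plan is to derive all four parts from the single observation that every $A\in\ts_G$ is injective as a $\fld G$-module, and more generally that $A\otimes_\fld M$ is injective for every $\fld G$-module $M$. Indeed, Theorem~\ref{main}(2) exhibits $A$ as a $\fld G$-module direct sum of projective indecomposables; since $\fld G$ is quasi-Frobenius and Noetherian, projective modules are injective and arbitrary direct sums of injectives remain injective (H.~Bass), so $A$ is injective. The standard twist isomorphism $\fld G\otimes_\fld M\cong (\fld G)^{(\dim_\fld M)}$ via $g\otimes m\mapsto g\otimes g^{-1}m$, used already in the proof of Lemma~\ref{tens_projective_bites}, then realises $A\otimes_\fld M$ as a $\fld G$-summand of a free module, so it is projective, hence again injective.

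For part (1), tensor the short exact sequence $0\to W\to V\to V/W\to 0$ with $A$ over $\fld$. By $\fld$-flatness of $A$ this yields an exact sequence of $\fld G$-modules
\[0\to A\otimes W\to A\otimes V\to A\otimes V/W\to 0,\]
which splits because $A\otimes W$ is injective. The resulting $\fld G$-equivariant splitting $A\otimes V\cong (A\otimes W)\oplus (A\otimes V/W)$ is simultaneously an $A$-module decomposition. Applying the symmetric-$A$-algebra functor and combining with the natural isomorphisms $\mathrm{Sym}_A(X\oplus Y)\cong\mathrm{Sym}_A(X)\otimes_A\mathrm{Sym}_A(Y)$ and $\mathrm{Sym}_A(A\otimes_\fld U)\cong A\otimes_\fld S(U)$ (both standard consequences of universal properties) then gives the required $G$-equivariant isomorphism of $A$-algebras
\[A\otimes S(V)\;\cong\;(A\otimes S(W))\otimes_A(A\otimes S(V/W))\;\cong\; A\otimes S(W)\otimes_\fld S(V/W).\]

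Parts (2)--(4) then follow quickly. For (2), iterate (1) along a composition series $0=V_0<V_1<\cdots<V_n=V$ of the finitely generated module $V$; the successive quotients are the simple constituents of $V$, so an induction on $n$ yields $A\otimes S(V)\cong A\otimes S(V_{ss})$. Part (3) is essentially a reformulation of (1), because the coproduct in $\GAalg$ of $B_1,B_2$ is $B_1\otimes_A B_2$. For (4), (2) shows that the isomorphism class of $A\otimes S(V)$ depends only on $V_{ss}$, which by Jordan--H\"older depends only on the class of $V$ in the Grothendieck group; part (3) then shows that the induced assignment $[V]\mapsto [A\otimes S(V)]$ respects the defining relations of $K_0$, giving a well-defined map into isomorphism classes of $\GAalg$ equipped with the monoid operation coming from $\otimes_A$.

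The main point requiring care is the $G$-equivariance of the symmetric-algebra identifications in the proof of (1). Since $G$ acts on $A$ by $\fld$-algebra automorphisms rather than $A$-linearly, the correct framework for $\mathrm{Sym}_A$ is the category of $A$-modules equipped with a compatible semilinear $G$-action; the universal-property arguments giving the cited natural isomorphisms carry over without difficulty to this setting, but this verification is the one technical step deserving explicit attention.
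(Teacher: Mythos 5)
Your overall strategy is the same as the paper's: use the fact that a trace-surjective algebra is projective (equivalently, by Bass--Papp over the self-injective ring $\fld G$, injective) as a $\fld G$-module to split $V$ ``after tensoring with $A$'', and then convert the module splitting into an algebra isomorphism via symmetric algebras; parts (2)--(4) are then handled exactly as in the paper. However, there is a genuine gap at the central step of part (1). Injectivity of $A\otimes W$ in $Mod\hbox{-}\fld G$ produces a retraction $\pi:\ A\otimes V\to A\otimes W$ that is $\fld G$-linear, but nothing in that argument makes $\pi$ (or the complement $\ker\pi$) $A$-linear: a $\fld G$-module complement of $A\otimes W$ inside $A\otimes V$ need not be an $A$-submodule. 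Your subsequent application of $\mathrm{Sym}_A$ and of $\mathrm{Sym}_A(X\oplus Y)\cong\mathrm{Sym}_A(X)\otimes_A\mathrm{Sym}_A(Y)$ requires the decomposition to live in the category of $A$-modules with compatible semilinear $G$-action (equivalently, $A\star G$-modules), so the sentence ``is simultaneously an $A$-module decomposition'' is precisely the assertion that needs proof, and it is not supplied. You flag the semilinearity of the $G$-action as a technical point at the end, but only for the naturality of the symmetric-algebra identifications, not for the splitting itself, which is where the real issue lies. Note that the paper does not rely on a bare injectivity splitting: it invokes the Maschke-type Lemma \ref{tens_projective_bites}(2) and then explicitly posits a $\fld G$-submodule $X\le A\otimes V$ isomorphic to $W\oplus V/W$ with the additional property that $X$ generates $A\otimes V$ \emph{freely over $A$}; that extra property is exactly what your argument is missing.

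The gap is repairable by the relative trace (Higman/Maschke) argument that pervades Section 2, and this is also where the hypothesis $A\in\ts_G$ enters in an essential way. Start from any $\fld$-linear projection $V\to W$; it induces an $A$-linear (but not $G$-equivariant) retraction $\pi_0:\ A\otimes V\to A\otimes W$ of the inclusion $\iota$. Choose a point $a\in A$ with $\tr_G(a)=1$ and set $\pi:=t_1^G(\mu_a\circ\pi_0)$, where $\mu_a$ is multiplication by $a$. Because $G$ acts on $A\otimes V$ by $A$-semilinear maps, the conjugate of an $A$-linear map is again $A$-linear, so $\pi$ is both $A$-linear and $G$-equivariant; and $\pi\circ\iota=t_1^G(\mu_a\circ\pi_0\circ\iota)=t_1^G(\mu_a)=\mu_{\tr_G(a)}=\id_{A\otimes W}$. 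Now $\ker\pi$ is an $A$-module complement stable under $G$, and your $\mathrm{Sym}_A$ computation goes through. Without some such use of a point the statement fails: for $A=\fld$ with trivial $G$-action (not in $\ts_G$ when $p\mid |G|$) one would conclude $S(V)\cong S(W\oplus V/W)$ as $\fld$-$G$-algebras, contradicting Corollary \ref{standard alg_decomp_cor2}(1) whenever $V$ is a non-split extension. With this one correction your proofs of (2), (3) and (4) are fine and agree with the paper's.
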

\begin{proof}
(1):\ Since $A\in Mod-\fld G$ is projective, we have
$A\otimes V\cong A\otimes (W\oplus V/W)$, hence there is a submodule
$X\le A\otimes V$ together with an isomorphism
$\phi:\ W\oplus V/W\cong X\in Mod-\fld G$, such that $A\otimes X=A\otimes V$.
This, together with the canonical embedding
$A\hookrightarrow A\otimes S(V)$, induces an isomorphism
$$\tilde\phi:\ A\otimes S(W)\otimes S(V/W)\cong  A\otimes S(W\oplus V/W)=\fld[A\otimes X]=
\fld[A\otimes V]\cong A\otimes S(V)\in\GAalg.$$
(2):\ This follows from an obvious induction.\\
(3) and (4):\ Note that
$A\otimes S(W)\otimes S(V/W)\cong$
$$(A\otimes S(W))\otimes_A(A\otimes S(V/W))\cong (A\otimes S(W))\coprod (A\otimes S(V/W))\in
\GAalg.$$
Now (2) and (3)  re direct consequences of (1).
\end{proof}

\begin{df}\label{A_soc}
For $A\in\Galg$ we define $A_{\rm soc}:=A_{\rm soc_{|G}}:=\fld[{\rm Soc}(A)]$, the subalgebra of $A$ generated by
the socle of the $\fld G$-module $A$.
\end{df}

Clearly if $p$ does not divide $|G|$, then $A_{\rm soc}=A$. If $0\ne W\in mod-\fld G$, then $0\ne W^{{\rm O}_p(G)}$ is $G$-stable,
so if moreover $W$ is irreducible, then ${{\rm O}_p(G)}$ acts trivially on $W$. On the other hand, if $g\in G$ acts trivially
on every irreducible $\fld G$-module, then $g-1\in {\rm Rad}(\fld G)$, the Jacobson-radical of $\fld G$ and therefore
$g^{p^n}-1=(g-1)^{p^n}=0$ for $n>>0$. From this we easily obtain the well-known formula
\begin{equation}\label{Op_kernel}
{\rm O}_p(G)=\cap_{W\in \fld G-mod\atop W\ {\rm simple}} C_G(W)
\end{equation}
with $C_G(W):=\ker_G(W)=\{g\in G\ |\ g_{|W}=\id_W\}$.

\begin{prp}\label{A_soc_prp}
Let $A\in\Galg$ with $G$ acting faithfully on $A$, then the following hold:
\begin{enumerate}
\item $A^G\le A_{\rm soc}\le A^{{\rm O}_p(G)}=A_{{\rm soc}_{|_{{\rm O}_{p,p'}(G)}}}.$
\item If $A$ is a domain, then ${\rm Quot}(A_{\rm soc})={\rm Quot}(A)_{\rm soc}=\mathbb{K}^G[{\rm Soc}(A)]$
with $\mathbb{K}:={\rm Quot}(A)$.
\item If $A$ is a normal domain, then $A^{{\rm O}_p(G)}$ is the integral closure of
$A_{\rm soc}$ in its quotient field.
\end{enumerate}
\end{prp}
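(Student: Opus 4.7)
The plan is to prove the three parts in order: (1) reduces to formula \eqref{Op_kernel} together with the $p'$-quotient structure of $N/P$ for $N:={\rm O}_{p,p'}(G)$ and $P:={\rm O}_p(G)$; (2) is a denominator-clearing argument using the $G$-norm $N_b:=\prod_{g\in G}(b)g$, which lies in $A^G$ and is nonzero for $0\ne b\in A$ since $A$ is a domain; and (3) combines (1) and (2) with the normality of $A$.

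For (1), the inclusion $A^G\le A_{\rm soc}$ is immediate since $A^G$ is a sum of trivial $\fld G$-modules hence contained in ${\rm Soc}(A)$. The inclusion $A_{\rm soc}\le A^P$ is a direct consequence of \eqref{Op_kernel}: every simple $\fld G$-submodule of $A$ has $P$ in its kernel, so ${\rm Soc}(A)\subseteq A^P$ and therefore $A_{\rm soc}=\fld[{\rm Soc}(A)]\subseteq A^P$. For the equality $A^P=A_{{\rm soc}|_N}$, I use that $P={\rm O}_p(N)$ (since $N/P$ is a $p'$-group by the definition of ${\rm O}_{p,p'}$), so the $N$-action on $A^P$ factors through the $p'$-group $N/P$; hence $A^P$ is semisimple as an $\fld N$-module, giving $A^P\subseteq{\rm Soc}_{\fld N}(A)$. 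The reverse inclusion is just \eqref{Op_kernel} applied to $N$.

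For (2), since $A_{\rm soc}$ is $G$-stable, any nonzero $b\in A_{\rm soc}$ satisfies $b^{-1}=N_b^{-1}\prod_{g\ne e}(b)g\in \mathbb{K}^G\cdot A_{\rm soc}$, which yields ${\rm Quot}(A_{\rm soc})\subseteq\mathbb{K}^G[{\rm Soc}(A)]$; the reverse inclusion is immediate from $\mathbb{K}^G={\rm Quot}(A^G)\subseteq{\rm Quot}(A_{\rm soc})$ (via (1)) together with ${\rm Soc}(A)\subseteq A_{\rm soc}$. To identify this with ${\rm Quot}(A)_{\rm soc}=\fld[{\rm Soc}(\mathbb{K})]$, I show every simple $\fld G$-submodule $V\subseteq\mathbb{K}$ lies in $\mathbb{K}^G[{\rm Soc}(A)]$: pick $0\ne d\in A$ with $dV\subseteq A$; then $N_d\cdot V\subseteq A$ and $N_dV\cong V$ as $\fld G$-modules since $N_d\in A^G$ acts as a scalar, so $N_dV$ is a simple $\fld G$-submodule of $A$. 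Hence $V\subseteq N_d^{-1}\,{\rm Soc}(A)\subseteq\mathbb{K}^G[{\rm Soc}(A)]$, and the reverse inclusion is clear since both $\mathbb{K}^G$ and ${\rm Soc}(A)$ sit inside ${\rm Soc}(\mathbb{K})$.

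For (3), I read ``its quotient field'' as ${\rm Quot}(A^P)$, which equals $\mathbb{K}^P$ by applying the norm argument to $P$ (using $\prod_{h\in P}(b)h\in A^P$). Let $C$ denote the integral closure of $A_{\rm soc}$ in $\mathbb{K}^P$. Since $A$ is integral over $A^G\subseteq A_{\rm soc}$, the subring $A^P\subseteq A$ is integral over $A_{\rm soc}$, and $A^P\subseteq\mathbb{K}^P$, so $A^P\subseteq C$. Conversely, $c\in C$ is integral over $A_{\rm soc}$ hence over $A$; normality of $A$ then forces $c\in A$, and $c\in\mathbb{K}^P$ yields $c\in A\cap\mathbb{K}^P=A^P$. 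The principal technical hurdle is the identification $\mathbb{K}_{\rm soc}=\mathbb{K}^G[{\rm Soc}(A)]$ in (2), which requires the norm-scaling trick to pull simple $\fld G$-submodules of the fraction field back into $A$; once this is in place, (1) is essentially formal from \eqref{Op_kernel} and (3) reduces to a short diagram chase using normality.
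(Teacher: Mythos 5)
Your parts (1) and (2) are correct and essentially coincide with the paper's argument: (1) is formal from Equation (\ref{Op_kernel}) together with Maschke applied to the $p'$-group ${\rm O}_{p,p'}(G)/{\rm O}_p(G)$, and your norm-clearing trick in (2) is exactly the paper's device of finding $0\ne a\in A^G$ with $aW\le A_{\rm soc}$ for a simple $W\le{\rm Soc}(\mathbb{K})$.

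Part (3), however, has a genuine gap. You announce that you ``read `its quotient field' as ${\rm Quot}(A^{{\rm O}_p(G)})$'', but the assertion is that $A^{{\rm O}_p(G)}$ is the integral closure of $A_{\rm soc}$ in \emph{its own} quotient field ${\rm Quot}(A_{\rm soc})$, i.e.\ that $A^{{\rm O}_p(G)}$ is the normalization of $A_{\rm soc}$. For that you must in particular prove $A^{{\rm O}_p(G)}\subseteq{\rm Quot}(A_{\rm soc})$, which by your part (2) amounts to the equality $\mathbb{K}_{\rm soc}=\mathbb{K}^{{\rm O}_p(G)}$. Part (1) only yields the inclusion $\mathbb{K}_{\rm soc}\le\mathbb{K}^{{\rm O}_p(G)}$, and if ${\rm Quot}(A_{\rm soc})$ were a proper subfield of $\mathbb{K}^{{\rm O}_p(G)}$ the integral closure of $A_{\rm soc}$ in its quotient field would be strictly smaller than $A^{{\rm O}_p(G)}$; so by relocating the closure into $\mathbb{K}^{{\rm O}_p(G)}$ you have sidestepped, not resolved, the central difficulty. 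This identification is precisely what the paper supplies: $\mathbb{K}_{\rm soc}$ is a field intermediate in the Galois extension $\mathbb{K}/\mathbb{K}^G$, hence equals $\mathbb{K}^X$ for some subgroup $X\le G$ containing ${\rm O}_p(G)$; by the normal basis theorem $\mathbb{K}$ contains a copy of $V_{reg}$ as a $\fld G$-module, so every simple $\fld G$-module occurs in ${\rm Soc}(\mathbb{K})$, and since $X$ fixes ${\rm Soc}(\mathbb{K})$ pointwise, Equation (\ref{Op_kernel}) forces $X\le{\rm O}_p(G)$ and therefore $X={\rm O}_p(G)$. Once ${\rm Quot}(A_{\rm soc})=\mathbb{K}^{{\rm O}_p(G)}$ is established, your concluding two-inclusion argument (integrality of $A^{{\rm O}_p(G)}$ over $A^G\subseteq A_{\rm soc}$ on one side, normality of $A$ on the other) does correctly finish the proof.
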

\begin{proof}
(1):\ The first inequality is obvious, the second one follows from Equation (\ref{Op_kernel}) and the last
equality is clear, since $p$ does not divide $|{\rm O}_{p,p'}(G)/{\rm O}_{p}(G)|$. \\
(2):\ Let $W\le {\rm Soc}(\mathbb{K})$ be a simple $\fld G$-module.
Then there exists $0\ne a\in A^G$ such that $aW\le A_{\rm soc}$, hence $W\le {\rm Quot}(A_{\rm soc})$
and therefore
$$\mathbb{K}_{\rm soc}=\fld[{\rm Soc}(\mathbb{K})]=\mathbb{K}^G[{\rm Soc}(\mathbb{K})]=\mathbb{K}^G[{\rm Soc}(A)].$$
In particular the algebra $\mathbb{K}_{\rm soc}$ is a field containing $A_{\rm soc}=\fld[{\rm Soc}(A)]$, hence
${\rm Quot}(A_{\rm soc})\subseteq \mathbb{K}_{\rm soc}$. Since $\mathbb{K}^G\subseteq {\rm Quot}(A_{\rm soc})$ it
follows that $\mathbb{K}_{\rm soc}=\mathbb{K}^G[{\rm Soc}(A)]\subseteq {\rm Quot}(A_{\rm soc})$. \\
(3):\ Let $A$ be a normal domain. Then $\mathbb{K}^G\le\mathbb{K}_{\rm soc}$ and Galois-theory imply that
$\mathbb{K}_{\rm soc}=\mathbb{K}^X\le \mathbb{K}^{{\rm O}_p(G)}$ for some subgroup $X\le G$ containing ${\rm O}_p(G)$.
By the normal basis theorem, the $\fld G$-module $\mathbb{K}$ contains a copy of $V_{reg}$ and therefore
every simple $\fld G$-module appears in ${\rm Soc}(\mathbb{K})$. Since $X$ acts trivially on ${\rm Soc}(\mathbb{K})$,
it follows that $X\le {\rm O}_p(G)$ and therefore $X={\rm O}_p(G)$.
\end{proof}

If $G$ is a $p$-group, then clearly $A_{\rm soc}=A^G$; in this situation it has been shown
in \cite{nonlin} Proposition 4.2, that if $\theta:\ A\to B$ is a morphism in $\ts_G$, then
$B\cong B^G\otimes_{A^G}A$ and $B$ is free of rank $|G|$ over $B^G$. The next result
is a partial generalization of this:

\begin{prp}\label{A_tens_Soc}
Let $\theta:\ A\to B$ be a morphism of algebras in $\ts_G$ with $B\in\ts_G^o$. Then there is a surjective morphism
$B_{\rm soc}\otimes_{A_{\rm soc}}A\to B$, where $B$ is a right $A_{\rm soc}$-module
via the map $\theta$.
\end{prp}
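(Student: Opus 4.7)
The approach is to analyze the natural multiplication morphism
\[
\Psi\colon B_{\rm soc}\otimes_{A_{\rm soc}}A\to B,\qquad b\otimes a\mapsto b\cdot\theta(a),
\]
and show it is well defined and surjective. Well-definedness follows because $\theta$ is $G$-equivariant and hence must send each simple $\fld G$-submodule of $A$ into a simple submodule of $B$ (or to $0$); consequently $\theta(A_{\rm soc})\subseteq B_{\rm soc}$, which is exactly the balance condition needed for $B_{\rm soc}\otimes_{A_{\rm soc}}(\cdot)$. The image of $\Psi$ is the $G$-stable sub-$\fld$-algebra $R:=B_{\rm soc}\cdot\theta(A)$ of $B$, so surjectivity is equivalent to $R=B$.

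To prove $R=B$ I would use the hypothesis $B\in\ts_G^o$ together with Corollary \ref{ts_gen_by_pts}: $B$ is generated as $\fld$-algebra by its set of points $\Pp_B$, and since $R$ is $G$-stable it is enough to check $\Pp_B\subseteq R$. To produce a controlled copy of the projective cover of the trivial module sitting inside $\theta(A)$, I would fix any point $\alpha\in A$ (available because $A\in\ts_G$), pick a primitive idempotent $\epsilon\in\fld G$ with $\epsilon\fld G\cong P(\fld)$, and set $W:=\theta(\alpha)\epsilon\fld G$; then $W\subseteq\theta(A)$, $W\cong P(\fld)$ as right $\fld G$-module, and $1_B=\theta(\alpha)\cdot\tr_G\in W$. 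Theorem \ref{main}(2) applied with this choice of $W$ then yields a $\fld G$-module decomposition $B=B^G\cdot W\oplus C$, where $C$ is projective and contains no summand isomorphic to $P(\fld)$. Because $B^G\subseteq B_{\rm soc}$ and $W\subseteq\theta(A)$, the first summand lies inside $R$.

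It therefore remains to show $C\subseteq R$. For this I would apply Theorem \ref{main}(2) to $A$ itself, obtaining $A=A^G\cdot W_A\oplus C_A$ with $C_A$ a projective $\fld G$-module, so that $\theta(C_A)\subseteq\theta(A)$ supplies a projective $\fld G$-submodule carrying the non-trivial indecomposable projective types that can appear in $C$. Exploiting that every indecomposable projective $\fld G$-module is also injective (the Frobenius property of $\fld G$) together with Lemma \ref{tens_projective_bites}, I would induct along the socle filtration $0\subseteq{\rm Soc}(C)\subseteq{\rm Soc}^2(C)\subseteq\cdots$: the base case is immediate since the simples in ${\rm Soc}(C)$ are non-trivial and therefore lie in ${\rm Soc}(B)\subseteq B_{\rm soc}\subseteq R$, and the inductive step lifts a class in ${\rm Soc}^{n+1}(C)/{\rm Soc}^n(C)$ via a $B^G$-linear, $G$-equivariant splitting supplied by the projective summands of $\theta(C_A)$.

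The step I expect to be the main obstacle is precisely this inductive argument: the projective summand-types visible in $\theta(C_A)$ need not match those appearing in $C$ without being rescaled by suitable $B_{\rm soc}$-factors, so organising the matching of isotypic components and driving the socle-filtration induction cleanly is the real technical heart of the proof.
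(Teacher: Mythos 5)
Your setup is fine as far as it goes: the map $\Psi\colon B_{\rm soc}\otimes_{A_{\rm soc}}A\to B$ is well defined because $\theta({\rm Soc}(A))\subseteq{\rm Soc}(B)$, its image $R=B_{\rm soc}\cdot\theta(A)$ is a $G$-stable subalgebra, and the summand $B^G\cdot W$ from Theorem \ref{main}(2) does lie in $R$. The genuine gap is the claim $C\subseteq R$. The decomposition $B=B^G\cdot W\oplus C$ is purely a $\fld G$-module decomposition, and $R$ is a subalgebra, not a $\fld G$-module summand compatible with it; your proposed induction along the socle filtration of $C$ has no mechanism for the inductive step. Knowing ${\rm Soc}^n(C)\subseteq R$ and that ${\rm Soc}^{n+1}(C)/{\rm Soc}^n(C)$ is semisimple tells you nothing about whether elements of ${\rm Soc}^{n+1}(C)$ are sums of \emph{products} of socle elements of $B$ with elements of $\theta(A)$ --- that is a multiplicative statement, and the ``$G$-equivariant splitting supplied by the projective summands of $\theta(C_A)$'' you invoke does not exist in any form you have constructed. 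You correctly flag this as the main obstacle, but it is not a technical loose end: it is the entire content of the proposition, and the additive/module-theoretic route you have chosen does not reach it.

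The paper closes this gap by working with algebra generators rather than with a module decomposition of $B$. Since $B\in\ts_G^o$ is generated by finitely many points, there is a surjection $\phi\colon D_{reg}^{\otimes N}\otimes_\fld A\to B$ restricting to $\theta$ on $A$. By Theorem \ref{SV_sproj}(2) one rewrites $D_{reg}^{\otimes N}\otimes_\fld A\cong S(W)\otimes_\fld A$ for a suitable $\fld G$-module $W$, and then the Maschke-type Proposition \ref{biting_off_constitutents}(2) --- which uses exactly the projectivity of $A$ as a $\fld G$-module together with Lemma \ref{tens_projective_bites}(2) --- gives $S(W)\otimes_\fld A\cong S(M)\otimes_\fld A$ with $M$ \emph{semisimple}. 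Now $\phi(M\otimes 1)$ is a semisimple submodule of $B$, hence lies in ${\rm Soc}(B)\subseteq B_{\rm soc}$, and $\phi(1\otimes A)=\theta(A)$, so $B=\phi(S(M)\otimes A)\subseteq B_{\rm soc}\cdot\theta(A)=R$. In other words, the correct place to apply Lemma \ref{tens_projective_bites} is to the polynomial generators of $B$ over $\theta(A)$, where it lets you replace an arbitrary generating module by a semisimple one; applying it inside the complement $C$ cannot work, because membership in $R$ is detected by products, not by submodule structure.
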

\begin{proof} Since $B$ is generated by a finite set of points, there is a surjective
morphism $\phi:\ D_{reg}^{\otimes N}\otimes_\fld A\to B$. By Theorem \ref{SV_sproj},
$D_{reg}^{\otimes N}\otimes_\fld A\cong(\overline{S(V_{reg})})^{\otimes N}\otimes_\fld A\cong S(W)\otimes_\fld A$
with suitable $\fld G$-module $W$. By Proposition \ref{biting_off_constitutents},
$S(W)\otimes_\fld A\cong S(M)\otimes_\fld A$ with a suitable semisimple $\fld G$-module $M$, hence
$B=\phi(D_{reg}^{\otimes N}\otimes_\fld A)=\phi(S(M)\otimes_\fld A)={\rm Im}(B_{\rm soc}\otimes_{A_{\rm soc}} A\to B)$.
\end{proof}

\begin{cor}\label{A_tens_sym_gen}
Let $p$ be a divisor of $|G|$, let $A\in\ts_G$ be universal and $V\in mod-\fld G$ a module such that every simple $\fld G$-module is a constituent of $V$. Then $A\otimes S(V)$ is an $s$-generator in $\ts_G$.
\end{cor}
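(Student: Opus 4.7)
The plan is to verify the criterion of Lemma \ref{generators in Ts}: given an arbitrary $R \in \ts_G$, I shall construct a surjective morphism $(A \otimes S(V))^{\otimes \ell} \to R$ in $\ts_G$ for some $\ell \in \mathbb{N}$.

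The first step exploits the ``socle-generation'' supplied by Proposition \ref{A_tens_Soc}. Since $p \mid |G|$, Corollary \ref{ts_gen_by_pts} gives $R \in \ts_G^o$, and universality of $A$ furnishes a morphism $\theta \in \ts_G(A, R)$. Proposition \ref{A_tens_Soc} then produces a surjection $R_{\rm soc} \otimes_{A_{\rm soc}} A \twoheadrightarrow R$, so $R$ is generated as a $\fld$-algebra by $\theta(A)$ together with elements of ${\rm Soc}(R)$. Because $R$ is finitely generated over $\fld$, I may pick $s_1, \dots, s_m \in {\rm Soc}(R)$ with $R = \theta(A)[s_1, \dots, s_m]$. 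The $\fld G$-submodule $U \le {\rm Soc}(R)$ generated by $s_1,\dots,s_m$ is automatically finite-dimensional and semisimple, being a finitely generated submodule of the semisimple module ${\rm Soc}(R)$ over the artinian algebra $\fld G$.

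The second step uses the hypothesis on $V$ to realise $U$ as a $\fld G$-quotient of some $V^{\oplus \ell}$. Since every simple $\fld G$-module is a constituent of $V$, choosing $\ell$ at least the maximum multiplicity of a simple appearing in $U$ (say $\ell = \dim_\fld U$) yields a surjective $\fld G$-homomorphism $V^{\oplus \ell} \twoheadrightarrow U$. Composing with $U \hookrightarrow R$ and applying the universal property of the symmetric algebra extends this to a $\fld G$-algebra morphism $S(V^{\oplus \ell}) \to R$ whose image contains each $s_j$. Taking the coproduct in $\Galg$ with $\theta$ produces
$$\Phi:\ A \otimes S(V^{\oplus \ell}) \to R,$$
a morphism in $\ts_G$ whose image contains $\theta(A) \cup \{s_1,\dots,s_m\}$ and is therefore all of $R$.

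The final step converts this into a surjection out of $(A \otimes S(V))^{\otimes \ell}$. Using the standard isomorphism $S(V^{\oplus \ell}) \cong S(V)^{\otimes \ell}$ and the fact that the iterated multiplication $\mu:\ A^{\otimes \ell} \twoheadrightarrow A$ is a surjective $G$-equivariant $\fld$-algebra homomorphism (this crucially uses commutativity of $A$), I obtain a chain of surjections
$$(A \otimes S(V))^{\otimes \ell} \;\cong\; A^{\otimes \ell} \otimes S(V)^{\otimes \ell} \;\stackrel{\mu \otimes \id}{\twoheadrightarrow}\; A \otimes S(V)^{\otimes \ell} \;\cong\; A \otimes S(V^{\oplus \ell}) \;\stackrel{\Phi}{\twoheadrightarrow}\; R$$
in $\ts_G$, which is what was needed. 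There is no serious obstacle in this argument; the only point requiring a moment's thought is that the generators of $R$ supplied by Proposition \ref{A_tens_Soc} can be corralled into a single finite-dimensional semisimple $\fld G$-submodule of ${\rm Soc}(R)$, and this is forced by the finite generation of $R$ together with the fact that a finitely generated semisimple $\fld G$-module is a finite direct sum of simples.
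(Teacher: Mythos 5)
Your overall architecture is sound and close to the paper's: the paper also feeds a surjection coming from (the proof of) Proposition \ref{A_tens_Soc} into the criterion of Lemma \ref{generators in Ts}, and uses the same trick of collapsing $A^{\otimes\ell}$ by multiplication to turn $A\otimes S(V^{\oplus\ell})\cong A\otimes S(V)^{\otimes\ell}$ into a quotient of $(A\otimes S(V))^{\otimes\ell}$. (The only cosmetic difference is that the paper targets $D_{reg}$ and then invokes Lemma \ref{D_reg_sproj_gen}, whereas you aim directly at an arbitrary $R$; both are fine.)

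However, there is one genuine gap: the claim that ``since every simple $\fld G$-module is a constituent of $V$, ... yields a surjective $\fld G$-homomorphism $V^{\oplus\ell}\twoheadrightarrow U$.'' A \emph{constituent} is a composition factor, and a composition factor of $V$ need not be a homomorphic image of $V$: if $S$ is a simple occurring in $U$ with $\Hom_{\fld G}(V,S)=0$ (which happens as soon as $S$ is a composition factor of $V$ but not of the head $V/\mathrm{rad}(V)$ --- e.g.\ a uniserial $V$ of length two whose socle is $S$ and whose head is a different simple), then no direct sum of copies of $V$ surjects onto $U$. This is exactly why the paper's proof \emph{begins} by invoking Proposition \ref{biting_off_constitutents}(2) to replace $V$ by its semisimplification $V_{ss}$, using $A\otimes S(V)\cong A\otimes S(V_{ss})$ (which crucially exploits that $A$ is a projective $\fld G$-module); once $V$ is semisimple, every simple is a direct summand and your surjection $V^{\oplus\ell}\twoheadrightarrow U$ exists. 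Your proof is repaired by inserting this one reduction at the start (alternatively, one could avoid it by using that $R$ is an injective $\fld G$-module, by Theorem \ref{main}, to extend each embedding $S_i\hookrightarrow \mathrm{Soc}(R)$ from a subquotient of $V$ to a map $V\to R$), but as written the key step fails for non-semisimple $V$.
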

\begin{proof}
By Proposition \ref{biting_off_constitutents} we can assume that $V$ is semisimple. Let $\phi:\ A\to D_{reg}$
be a morphism in $\ts_G$,
then the proof of Proposition \ref{A_tens_Soc} shows that there is a surjective morphism
$A\otimes S(M)\to D_{reg}$ with suitable semisimple module $M$. For a suitable integer $s\ge 0$ we have
a surjective map $\theta\in \Hom_{\fld G}(V^s,M)$. Using the multiplication map $A^{\otimes s}\to A$
one can extend $\theta$ to a surjective morphism $(A\otimes S(V))^{\otimes s}\cong A^{\otimes s}\otimes S(V^s)\to A\otimes S(M)$.
Since $D_{reg}$ is an s-generator by Lemma \ref{D_reg_sproj_gen}, so are $A\otimes S(M)$ and
$A\otimes S(V)$.
\end{proof}

\begin{prp}\label{A_tens_Soc_2}
Let $G$ be $p$-solvable of $p$-length $s$ and order $hq$ with $h=p^m$ and
$p\not | q$. Let $A\in\ts_G$ with point $u\in A$ and set $C:=\fld[u^G]=\fld[(u)g\ |\ g\in G]$, then
$A\cong A_{soc}\otimes_{C_{soc}}C$ as algebra and as module over $A_{soc}$ it is generated
by elements $y_1,\cdots,y_h\in A$ of the form
$y_i=\sum_j(u)g_{i,j,1}(u)g_{i,j,2}\cdots (u)g_{i,j,s}$ for suitable $g_{i,j,k}\in G$.
\end{prp}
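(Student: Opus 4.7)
The plan is to induct on the $p$-length $s$ of $G$. For the base case $s=0$, $G$ is a $p'$-group, so $\fld G$ is semisimple, $A=A_{soc}$, $C=C_{soc}$, and $h=p^0=1$; the statement then holds trivially with $y_1:=1$ (the empty product of $G$-translates of $u$).

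For the inductive step with $s\ge 1$, set $N:=O_p(G)$, a nontrivial normal $p$-subgroup with $l_p(G/N)=s-1$, and write $\bar G:=G/N$. Since $A\in\ts_G$ is a projective $\fld G$-module by Lemma~\ref{tr}, it is a free $\fld N$-module ($N$ being a $p$-group), so $A\in\ts_N$; the same argument, via the factorisation $\tr_G=\tr_N^G\circ\tr_N$, yields $C\in\ts_N$. Proposition~4.2 of \cite{nonlin} applied to the inclusion $C\hookrightarrow A$ in $\ts_N$ gives
\[A\cong A^N\otimes_{C^N}C\]
as $\fld G$-algebras, with $A$ free of rank $|N|=p^{m_N}$ over $A^N$. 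The element $v:=\tr_N(u)\in A^N$ is a $\bar G$-point, because $\tr_{\bar G}(v)=\tr_N^G(\tr_N(u))=\tr_G(u)=1$, so $A^N\in\ts_{\bar G}$ with cyclic subalgebra $C':=\fld[v^{\bar G}]$. Applying the inductive hypothesis to $(A^N,v,\bar G)$ produces generators $y'_1,\dots,y'_{h'}\in A^N$ with $h'=p^{m-m_N}$, each a sum of $(s-1)$-fold products of $\bar G$-translates of $v$, such that $A^N=\sum_i(A^N)_{soc_{\bar G}}\,y'_i$. Formula~(\ref{Op_kernel}) shows every simple $\fld G$-module is annihilated by $N=O_p(G)$, so ${\rm Soc}(A)\subseteq A^N$ and hence $A_{soc}=A_{soc_G}=(A^N)_{soc_{\bar G}}$.

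To conclude, expanding $v=\sum_{n\in N}(u)n$ rewrites each $y'_i$ as a sum of $(s-1)$-fold products of $G$-translates of $u$. Choosing an $A^N$-generating set $z_1,\dots,z_{|N|}\in C$ of $A$ inside the $\fld$-span of the single translates $\{(u)g\}_{g\in G}$, the products $y_{(i,k)}:=y'_i\cdot z_k$ constitute $h'\cdot|N|=p^m=h$ generators of $A$ over $A_{soc}$, each a sum of $s$-fold products of $G$-translates of $u$; the identity $1=\tr_G(u)=\sum_g(u)g$ serves as padding whenever factor counts need adjusting. The algebra isomorphism $A\cong A_{soc}\otimes_{C_{soc}}C$ then follows from Proposition~\ref{A_tens_Soc}, which supplies the surjection $A_{soc}\otimes_{C_{soc}}C\twoheadrightarrow A$, together with matching $A_{soc}$-ranks on both sides (the rank of $C$ over $C_{soc}$ being $h$, obtained by running the same induction with $A$ replaced by the cyclic algebra $C\in\ts_G^o$). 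The principal obstacle is the combinatorial step of realising the $A^N$-generating set $z_1,\dots,z_{|N|}$ of $A$ inside the $\fld$-span of the single $u$-translates: Lemma~\ref{rec_free} identifies $\fld N\cdot u\subseteq C$ as a free rank-one $\fld N$-submodule, and Theorem~\ref{main}(2) organises the projective-cover decomposition of $A$, but weaving these into an explicit generating set of the required monomial form—and matching $A_{soc}$-ranks on both sides of the Proposition~\ref{A_tens_Soc} surjection to secure injectivity—absorbs most of the technical work.
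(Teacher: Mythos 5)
There is a genuine gap at the very start of your inductive step: you set $N:={\rm O}_p(G)$ and assert it is a nontrivial normal $p$-subgroup whenever the $p$-length $s$ is at least $1$. This is false. A $p$-solvable group of positive $p$-length can perfectly well have trivial $p$-core --- take $G=S_3$ with $p=2$, or more generally any $G$ whose upper $p$-series begins with a nontrivial ${\rm O}_{p'}(G)$ and has ${\rm O}_p(G)=1$. For such groups your induction never gets off the ground, and no amount of care in the later combinatorics repairs this. The paper's proof is organized around exactly this dichotomy: when ${\rm O}_p(G)=1$ it passes instead to $N:={\rm O}_{p'}(G)\ne 1$, uses Proposition \ref{A_tens_Soc} to produce a surjection from $A_{\rm soc}\otimes_\fld A^N$ onto $A$, and applies the inductive hypothesis to $A^N\in\ts_{G/N}$, where now ${\rm O}_p(G/N)\ne 1$; only in the complementary case ${\rm O}_p(G)=:P\ne 1$ does it argue as you do, writing $A=\oplus_{g\in P}A^P(x)g$ for the $P$-point $x$ and inducting on $A^P\in\ts_{G/P}$. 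Your treatment of that second case (freeness of $A$ over $\fld N$, the $\bar G$-point $v=\tr_N(u)$, ${\rm Soc}(A)\subseteq A^N$ via Equation (\ref{Op_kernel})) is essentially the paper's argument and is fine; the missing case is the problem.

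A secondary concern: the algebra isomorphism $A\cong A_{soc}\otimes_{C_{soc}}C$ is not actually proved in your proposal. Proposition \ref{A_tens_Soc} only gives a surjection, and your plan to upgrade it by ``matching $A_{soc}$-ranks on both sides'' is not a well-defined argument --- $A$ is not shown to be free or even projective over $A_{soc}$, so there is no rank to compare; you acknowledge yourself that this step ``absorbs most of the technical work,'' which is to say it is not done. You should either carry out that comparison concretely or, as the paper does, lean on the explicit decomposition $A\cong A^N\otimes_{C^N}C$ (respectively $A=\oplus_{g\in P}A^P(x)g$) coming from \cite{nonlin} Proposition 4.2 to control the structure of $A$ over the relevant invariant rings before descending to $A_{soc}$.
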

\begin{proof} We can assume that $G$ is neither $p$ nor $p'$-group.\\
Assume $1={\rm O}_p(G)$. Then $1\ne N:={\rm O}_{p'}(G)$ and $A^N\in \ts_G^o\cap\ts_{G/N}^o$.
Since $A^N\in\ts_G^o$  there is an onto morphism $A_{soc} \otimes_\fld A^N\to A$ in $\ts_G$, by
Proposition \ref{A_tens_Soc}. Since $1\ne {\rm O}_p(G/N)$ and $A^N\in \ts_{G/N}$, induction gives
$A^N=\sum_{i=1}^h (A^N)_{soc_{G/N}}y_i$ with $y_i$ as required. Since
$(A^N)_{soc_{G/N}}\le A_{soc}$ the result follows. \\
So we can assume that $1\ne P:={\rm O}_p(G)$. Let ${\R}_P\subseteq G$
be a cross-section of left $P$-cosets with $G=\cup_{r\in\R_P} rP$. Then $x:=\sum_{r\in\R_P}ur$ is a $P$-point
and we get $A=\oplus_{g\in P}A^P (x)g$. By induction and $A^P\in\ts_{G/P}$ we have
$A^P=\sum_{j=1}^{h/|P|} (A^P)_{soc_{G/P}} y'_j$ with $y'_j=(u')g'_{1,j}\cdots (u')g'_{s-1,j}$
and $u':=\sum_{g\in P} ug$, a $G/P$-point in $A^P$. Since $(A^P)_{soc_{G/P}}\le A_{soc_G}$,
$A=\sum_{j=1\atop g\in P}^{h/|P|}A_{soc_G} y_j'\cdot (x)g$. It clear by construction
that every $y'_j\cdot(x)g$ is a sum of monomials of the form $(u)g_{j,1}\cdots (u)g_{j,s}$
with $g_{j,k}\in G$.
\end{proof}

\section{Appendix on adjoint functors}\label{adj_funct}

\begin{df}\label{adj_df}
Let $\A$ and $\B$ be categories.
A pair of covariant functors
$$(L,F)\ with\ L:\ {\A} \longrightarrow {\B}\ and\ F:\ {\B} \longrightarrow {\A}$$
is called an
{\bf adjoint pair} if there is an isomorphism of contra- covariant
bifunctors:
$$ \Psi^{(L,F)}:\ {\B}(\ L(.)\ ,\ .\ )\ \cong \
{\A}(\ .\ ,\ F(.)\ ).$$
In this case $L$ is called a {\bf left adjoint} of $F$, which is
itself called a  {\bf right adjoint} of $L$.
The adjointness of $(L,F)$ induces two morphisms of functors, a
{\bf unit}
$$ u^{(L,F)}:\ Id_{\A} \longrightarrow FL,\ u^{(L,F)}_a\ \ =\ \
\Psi^{(L,F)}({\rm id}_{L(a)}); $$ and a  {\bf counit}
$$ c^{(L,F)}:\ LF \longrightarrow Id_{\B},\ c^{(L,F)}_b\ =\ \
(\Psi^{(L,F)})^{-1}({\rm id}_{F(b)}).$$
\end{df}

If the context is clear, we will freely omit the upper indices ${(L,F)}$.
For $\beta \in {\A}(a_0,a_1)$,$\gamma \in {\B}(b_0,b_1)$,
$a \in {\A}$ and $b \in {\B}$ we have the following commutative diagrams:

\begin{diagram}
\B(L(a_1),b)     &\rTo^\cong&\A(a_1,F(b))&\ &\ &\ &\B(L(a),b_0)&\rTo^\cong&\A(a,F(b_0))\\
\dTo^{L(\beta)^*}&        &\dTo_{\beta^*}&\ &\ &\ &\dTo^{\gamma_*}& &\dTo_{F(\gamma)_*} \\
\B(L(a_0),b)&\rTo^\cong&\A(a_0,F(b))     &\ &\ &\ &\B(L(a),b_1)&\rTo^\cong&\A(a,F(b_1))\\
\end{diagram}

Hence we get for any $\alpha \in {\B}(L(a_1),b)$:
$$\Psi(\alpha) \circ \beta = (\beta)^*(\Psi(\alpha))=
\Psi(L(\beta)^*(\alpha))= \Psi(\alpha\circ L(\beta));$$
and for any $\gamma \in {\B}(b,b_1)$:
$$F(\gamma)\circ \Psi(\alpha)  = F(\gamma)_*( \Psi(\alpha)) =
\Psi(\gamma_*(\alpha)) = \Psi(\gamma \circ \alpha).$$
In particular we see:
$$F(c^{(L,F)}) \circ u^{(L,F)}_F= F(c^{(L,F)}) \circ \Psi({\rm id}_{LF})=
\Psi(c^{(L,F)}) =  Id_F;$$
and similarly: $$\Psi(\ c^{(L,F)} \circ L(u^{(L,F)}) \ ) =
\Psi(c^{(L,F)}) \circ u^{(L,F)} = u^{(L,F)} = \Psi(\id_{L});$$
hence $c^{(L,F)} \circ L(u^{(L,F)}) =  Id_{L}$.
So the compositions of morphisms of functors
$$F(c) \circ u_F: F \longrightarrow FLF \longrightarrow F$$
and
$$c_{L} \circ L(u): L \longrightarrow LFL \longrightarrow L$$
are the respective identity morphisms.
\\[2mm]
We also can recover the bifunctorial isomorphism $\Psi$ and its
inverse from $L,F,u$ and $c$:\\
Indeed for $\alpha \in {\B}(L(a),b)$
and $\beta \in {\A}(a,F(b))$ we get:
$\Psi(\alpha) = \Psi(\alpha \circ {\rm id}) = F(\alpha) \circ \Psi({\rm id}) =
F(\alpha) \circ u$ and
$$\Psi(c \circ L(\beta)) = \Psi(\Psi^{-1}({\rm id})\circ L(\beta)) = {\rm id} \circ \beta.$$
so $\Psi^{-1}(\beta) = c \circ L(\beta)$.\\
Notice also that for any morphisms $\alpha$ in ${\A}$
and $\beta$ in ${\B}$: $L(\alpha) = {\rm id}_{L} \circ L(\alpha) =
\Psi^{-1}(\Psi({\rm id}_{L}) \circ \alpha ) = \Psi^{-1} \circ (u)_* (\alpha);$
and $F(\alpha) = F(\alpha) \circ {\rm id}_F =
\Psi(\alpha \circ \Psi^{-1}({\rm id}_F)) = \Psi \circ (c)^* (\alpha)$.

\bigskip
Let us summarize these formulae
\begin{thm}\label{adj_form} ({\bf Adjunction - formulae}):Suppose the covariant functors
$$L: {\A} \longrightarrow {\B}\ {\rm and}\ F: {\B} \longrightarrow {\A}$$
build an adjoint pair
$(L,F)$ with the isomorphism of bifunctors:
$$ \Psi:\ {\B}(\ L(.)\ ,\ .\ )\ \cong \
{\A}(\ .\ ,\ F(.)\ ),$$
the unit
$$ u:\ Id_{\A} \longrightarrow FL,\ u_a\ \ =\ \
\Psi({\rm id}_{L(a)}), $$ and the counit
$$ c:\ LF \longrightarrow Id_{\B},\ c_b\ =\ \
\Psi^{-1}({\rm id}_{F(b)}).$$
Then the following relations hold whenever they are meaningful:
$$\Psi(\alpha) \circ \beta\ =\  \Psi(\alpha\circ L(\beta));
\ \ F(\gamma)\circ \Psi(\alpha)\ =\ \Psi(\gamma \circ \alpha);\eqno(1.1)$$
$$F(c) \circ u_F \ =\ Id_F;\ \ c \circ L(u) \ =\  Id_{L};\eqno(1.2)$$
$$\Psi\ =\ (u)^* \circ F;\ \ \Psi^{-1}\ =\ (c)_* \circ L;\eqno(1.3)$$
$$F \ =\ \Psi \circ (c)^*;\ \ L \ =\ \Psi^{-1} \circ (u)_*.\eqno(1.4)$$
Suppose $L: \A \to \B$ and $F: \B \to \A$ are functors, then $(L,F)$ is an
adjoint pair if and only if one of the following holds:
\begin{enumerate}
\item[(i)] For any $a\in \A$ and $b\in \B$ there is an isomorphism
$$\Psi: \B(L(a), b) \cong \A(a,R(b))$$ such that $(1.1)$ holds.
\item[(ii)] There are morphisms $u: Id_\A \to FL$ and $c: LF \to Id_\B$ such
that $(1.2)$ holds.
\end{enumerate}
\end{thm}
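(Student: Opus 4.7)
The plan is to view Theorem \ref{adj_form} as a summary that crystallizes the computations carried out in the discussion immediately preceding it, so the proof is mostly bookkeeping plus one genuinely substantive step (the implication (ii) $\Rightarrow$ adjointness). First I would derive (1.1) directly from the bifunctoriality of $\Psi$: for $\beta\in\A(a_0,a_1)$ the natural square with vertical maps $L(\beta)^*$ and $\beta^*$ gives $\Psi(\alpha)\circ\beta=\Psi(\alpha\circ L(\beta))$, and the dual square with $\gamma_*$ and $F(\gamma)_*$ gives $F(\gamma)\circ\Psi(\alpha)=\Psi(\gamma\circ\alpha)$. Substituting $\alpha=\mathrm{id}_{LF(b)}$ and $\alpha=\mathrm{id}_{L(a)}$ respectively, with $\beta=u_a$ or $\gamma=c_b$, collapses (1.1) into the triangle identities (1.2). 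The identities (1.3) and (1.4) are then obtained by applying (1.1) with one of the arguments being an identity morphism and reading off $\Psi(\alpha)=F(\alpha)\circ u_a$ and $\Psi^{-1}(\beta)=c_b\circ L(\beta)$, i.e.\ $\Psi=(u)^*\circ F$ and $\Psi^{-1}=(c)_*\circ L$; inverting these gives (1.4).

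For the characterization, condition (i) is essentially the unpacking of the definition: an isomorphism $\Psi:\B(L(a),b)\cong\A(a,F(b))$ is bifunctorial precisely when the two equations of (1.1) hold, since those equations say exactly that the two naturality squares commute. The substantive content is (ii) $\Rightarrow$ adjointness. Given natural transformations $u:\mathrm{Id}_\A\to FL$ and $c:LF\to\mathrm{Id}_\B$ satisfying (1.2), I would define
\[
\Psi(\alpha):=F(\alpha)\circ u_a,\qquad \Phi(\beta):=c_b\circ L(\beta),
\]
for $\alpha\in\B(L(a),b)$ and $\beta\in\A(a,F(b))$, and check $\Psi\circ\Phi=\mathrm{id}$ and $\Phi\circ\Psi=\mathrm{id}$. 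The first computation uses naturality of $u$ at $\beta$ to write $F(L(\beta))\circ u_a=u_{F(b)}\circ\beta$, after which the triangle identity $F(c_b)\circ u_{F(b)}=\mathrm{id}_{F(b)}$ finishes it; the second uses naturality of $c$ at $F(\alpha)$ to replace $L(F(\alpha))\circ c_{L(a)}$ by $c_b\circ L(F(\alpha))$ appropriately and then invokes $c_{L(a)}\circ L(u_a)=\mathrm{id}_{L(a)}$. Bifunctoriality of $\Psi$ is automatic from the functoriality of $F$ and the naturality of $u$.

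The main obstacle, if any, is being careful with the naturality squares: both triangle identities in (1.2) and the naturality of $u$ and $c$ (which comes bundled in the statement that they are morphisms of functors) are needed in precisely the right slots. Once those are in place, the verification is mechanical, and the characterization falls out symmetrically. Finally, reading (1.3) as a \emph{definition} of $\Psi$ in terms of $u,F$, and (1.4) as the recipe for recovering $L$ and $F$ from $(u,c,\Psi)$, one sees that the data $(L,F,\Psi)$, $(L,F,u,c)$ with (1.2), and $(L,F,\Psi)$ with (1.1) all encode the same structure.
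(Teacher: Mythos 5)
Your proposal is correct and follows essentially the same route as the paper: (1.1)--(1.4) are read off from the naturality squares and substitution of identity morphisms, and the substantive implication (ii) $\Rightarrow$ adjointness is handled exactly as in the paper by defining $\Psi$ and $\Psi^{-1}$ via the formulae of (1.3) and verifying they are mutually inverse using the naturality of $u$ and $c$ together with the triangle identities (1.2), then reducing to (i).
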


\begin{proof}
We only have to verify the second part. Consider (i): then $(1.1)$ is just
a way of saying that $\Psi$ is an isomorphism of bifunctors.
Consider (ii): Then define $\Psi, \Psi^{-1}$ by $(1.3)$ and observe:
$$\Psi \Psi^{-1} (\alpha) = u^* F(c L(\alpha)) =
F(c) FL(\alpha) u = F(c) u \alpha = \alpha.$$
$\Psi^{-1} \Psi (\beta) = c_* L(F(\beta) u) = c LF(\beta) L(u)$
$= \beta c L(u) = \beta$. Moreover $\Psi(\alpha) \beta = u^* F(\alpha) \beta =$
$F(\alpha) u \beta = F(\alpha) FL(\beta) u = F(\alpha L(\beta)) u = \Psi(\alpha L(\beta))$
and $F(\gamma) \Psi(\alpha) = F(\gamma) F(\alpha) u = F(\gamma \alpha) u$
$= \Psi(\gamma \alpha)$. Now the adjointness follows from $i)$.
\end{proof}

Let $F:\ \B\to\A$ and $L,R:\ \A\to\B$ be functors such that
$(L,F)$ and $(R,F)$ are adjoint pairs with corresponding isomorphisms
$$\Psi^{(L,F)}:\ \B(L(a),b)\to\A(a,F(b))\ {\rm and}\
\Psi^{(F,R)}:\ \A(F(b),a')\to\B(b,R(a')).$$
Then for $\beta\in\B(L(a),b)$ we have $\Psi^{(L,F)}(\beta)=F(\beta)\circ u^{(L,F)}_a$ and
for $\alpha\in\B(b,R(a'))$ we have $(\Psi^{(F,R)})^{-1}(\alpha)= c^{(F,R)}_{a'}\circ F(\alpha)$,
where $u^{(L,F)}$ and $c^{(F,R)}$ denote unit and co-unit of the corresponding adjoint pairs.
It follows that
$(\Psi^{(F,R)})^{-1}(\alpha)\circ \Psi^{(L,F)}(\beta)=
c^{(F,R)}_{a'}\circ F(\alpha\circ\beta)\circ u^{(L,F)}_a\in \A(a,a').$
This observation gives rise to the following

\begin{df} \label{trace_of_two_sd_adj}\label{ex2_7}
Let $F: {\B} \longrightarrow {\A}$ be a functor with left
and right adjoints ${L},R: {\A} \longrightarrow {\B}$.
We define the $F$-trace as the map
$${\rm T}_F: {\B}({L}(a),R(a')) \longrightarrow
{\A}(a,a')$$
$${\rm T}_F(\alpha) =c^{(F,R)}_{a'}\circ F(\alpha)\circ u^{({L},F)}_a.$$
With $\A(a,a')_{a''}$ we denote the subset of morphisms in
$\A(a,a')$ that factor through the object $a''\in\A$ and we set
$\A(a,a')_F:=\cup_{b\in\B}\A(a,a')_{F(b)}$.
\end{df}

From the preceding discussion and Theorem \ref{adj_form} we obtain:
\begin{lemma}\label{F-trace_formulae}
Let $\beta\in\B(L(a),b)$, $\alpha\in\B(b,R(a'))$, $\gamma\in\A(a'',a)$ and
 $\delta\in\A(a',a'')$. Then we have:
 \begin{enumerate}
\item ${\rm T}_F(\alpha) = (\Psi^{(F,R)} )^{-1}(\alpha) \circ u^{({L},F)} =
c^{(F,R)}\circ \Psi^{(L,F)}(\alpha)$, if $b=R(a')$.
\item ${\rm T}_F(\alpha\circ \beta) = (\Psi^{(F,R)} )^{-1}(\alpha) \circ
\Psi^{({L},F)}(\beta)$;
\item ${\rm T}_F(\alpha)\circ\gamma={\rm T}_F(\alpha\circ L(\gamma))$;
\item $\delta\circ {\rm T}_F(\alpha)={\rm T}_F(R(\delta)\circ\alpha)$.
\end{enumerate}
\end{lemma}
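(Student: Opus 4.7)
The plan is to derive all four identities by unfolding the definition
$T_F(\xi) = c^{(F,R)}_{a'} \circ F(\xi) \circ u^{(L,F)}_a$ and then invoking
either the formulae (1.3)--(1.4) of Theorem \ref{adj_form} or the naturality
of the unit $u^{(L,F)}$ and counit $c^{(F,R)}$. Nothing here is deep; the
``obstacle'', such as it is, is merely keeping track of which of the two
adjunctions each unit/counit comes from.

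For (1), I would read off from formula (1.3) that
$(\Psi^{(F,R)})^{-1} = (c^{(F,R)})_* \circ F$, which means that for
$\alpha \in \B(L(a),R(a'))$ one has
$(\Psi^{(F,R)})^{-1}(\alpha) = c^{(F,R)}_{a'} \circ F(\alpha) \in \A(FL(a),a')$.
Postcomposing with $u^{(L,F)}_a$ gives exactly the definition of $T_F(\alpha)$,
yielding the first equality. The second equality is dual: (1.3) also gives
$\Psi^{(L,F)}(\alpha) = F(\alpha) \circ u^{(L,F)}_a$, and precomposing with
$c^{(F,R)}_{a'}$ once more reproduces $T_F(\alpha)$. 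For (2), I would simply
apply functoriality of $F$ to $\alpha \circ \beta$:
$T_F(\alpha \circ \beta)
 = c^{(F,R)}_{a'} \circ F(\alpha) \circ F(\beta) \circ u^{(L,F)}_a$,
then regroup using the two expressions just established, recognising the left
half as $(\Psi^{(F,R)})^{-1}(\alpha)$ and the right half as $\Psi^{(L,F)}(\beta)$.

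For (3) and (4) I would invoke naturality of the unit and counit of an
adjunction, which is built into Definition \ref{adj_df} (they are morphisms
of functors). Explicitly, for $\gamma \in \A(a'',a)$ naturality of $u^{(L,F)}$
gives the commutative square $u^{(L,F)}_a \circ \gamma
 = FL(\gamma) \circ u^{(L,F)}_{a''}$, so
\begin{align*}
T_F(\alpha) \circ \gamma
 &= c^{(F,R)}_{a'} \circ F(\alpha) \circ u^{(L,F)}_a \circ \gamma \\
 &= c^{(F,R)}_{a'} \circ F(\alpha) \circ FL(\gamma) \circ u^{(L,F)}_{a''}
  = c^{(F,R)}_{a'} \circ F(\alpha \circ L(\gamma)) \circ u^{(L,F)}_{a''},
\end{align*}
which is precisely $T_F(\alpha \circ L(\gamma))$. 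For (4), naturality of
$c^{(F,R)}$ applied to $\delta \in \A(a',a'')$ gives
$\delta \circ c^{(F,R)}_{a'} = c^{(F,R)}_{a''} \circ FR(\delta)$, and the
same reshuffling (this time on the left) converts $\delta \circ T_F(\alpha)$
into $T_F(R(\delta) \circ \alpha)$. The whole argument is thus a sequence of
substitutions using (1.3) and the two naturality squares; no new ideas are
required beyond Theorem \ref{adj_form}.
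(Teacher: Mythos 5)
Your argument is correct and is essentially the route the paper intends: parts (1) and (2) are exactly the computation carried out in the discussion immediately preceding Definition \ref{trace_of_two_sd_adj} together with the formulae $(1.3)$ of Theorem \ref{adj_form}, and parts (3) and (4) follow from the naturality of $u^{(L,F)}$ and $c^{(F,R)}$, which is equivalent to using the relations $(1.1)$. The only nitpicks are that you should say ``precomposing'' with $u^{(L,F)}_a$ in part (1), and that the side condition there should be read as requiring $\alpha\in\B(L(a),R(a'))$ so that ${\rm T}_F(\alpha)$ is defined.
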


The next result describes the image of the $F$-trace map ${\rm T}_F$:

\begin{prp}\label{image_of_T}
Suppose the situation of \ref{ex2_7}, then: ${\rm T}_F({\B}({L}(a_1),R(a_2))) =
{\A}(a_1,a_2)_F.$
For an object $a \in {\A}$ the following are equivalent:
\begin{enumerate}
\item ${\rm id}_a \in \A(a,a)_F$;
\item ${\rm id}_a\ factors\ through\ F{L}(a)$;
\item ${\rm id}_a\ factors\ through\ FR(a)$;
\item the co-unit $c^{(F,R)}$ splits;
\item the unit $u^{(L,F)}$ splits.
\end{enumerate}
\end{prp}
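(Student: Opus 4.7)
The plan has two parts: first establish the set-equality $T_F(\B(L(a_1),R(a_2))) = \A(a_1,a_2)_F$, then chain the five equivalences through this.

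For the image identity, the inclusion $\subseteq$ is immediate from the definition $T_F(\alpha) = c^{(F,R)}_{a_2} \circ F(\alpha) \circ u^{(L,F)}_{a_1}$, which manifestly factors through $F(L(a_1))$ (and through $F(R(a_2))$). For $\supseteq$, I would take $\phi \in \A(a_1,a_2)_F$ factoring as $\phi = \delta \circ \gamma$ with $\gamma \in \A(a_1,F(b))$ and $\delta \in \A(F(b),a_2)$. Set $\beta := (\Psi^{(L,F)})^{-1}(\gamma) \in \B(L(a_1),b)$ and $\alpha := \Psi^{(F,R)}(\delta) \in \B(b,R(a_2))$. Then Lemma \ref{F-trace_formulae}(2) yields $T_F(\alpha \circ \beta) = (\Psi^{(F,R)})^{-1}(\alpha) \circ \Psi^{(L,F)}(\beta) = \delta \circ \gamma = \phi$, so $\phi$ lies in the image of $T_F$.

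For the equivalences, the trivial implications are $(2) \Rightarrow (1)$ and $(3) \Rightarrow (1)$, since $FL(a)$ and $FR(a)$ are of the form $F(b)$. Conversely, from $(1)$ the image identity gives some $\alpha \in \B(L(a),R(a))$ with $\id_a = T_F(\alpha) = c^{(F,R)}_a \circ F(\alpha) \circ u^{(L,F)}_a$; reassociating shows $\id_a$ factors through $FL(a)$ via $u^{(L,F)}_a$ and $c^{(F,R)}_a \circ F(\alpha)$, establishing $(2)$, and through $FR(a)$ via $F(\alpha) \circ u^{(L,F)}_a$ and $c^{(F,R)}_a$, establishing $(3)$.

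It remains to link $(2) \Leftrightarrow (5)$ and $(3) \Leftrightarrow (4)$. The direction $(5) \Rightarrow (2)$ is tautological: a left inverse $s$ of $u^{(L,F)}_a$ exhibits $\id_a = s \circ u^{(L,F)}_a$. For $(2) \Rightarrow (5)$, I would use the adjunction to write any $\gamma \in \A(a,FL(a))$ in the canonical form $\gamma = \Psi^{(L,F)}(\beta) = F(\beta) \circ u^{(L,F)}_a$ for a unique $\beta \in \B(L(a),L(a))$; if $\id_a = \delta \circ \gamma$, then $\delta \circ F(\beta)$ splits $u^{(L,F)}_a$. The argument for $(3) \Leftrightarrow (4)$ is dual: use $\Psi^{(F,R)}: \A(FR(a),a) \cong \B(R(a),R(a))$ to write any $\delta \in \A(FR(a),a)$ as $\delta = c^{(F,R)}_a \circ F(\mu)$, so from $\id_a = \delta \circ \gamma$ one obtains the right inverse $F(\mu) \circ \gamma$ of $c^{(F,R)}_a$. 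There is no genuine obstacle; the whole proposition is essentially bookkeeping with the adjunction formulas of Theorem \ref{adj_form}, the main care being to apply $\Psi^{(L,F)}$ and $\Psi^{(F,R)}$ in the correct direction at each step.
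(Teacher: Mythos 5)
Your proof is correct and follows essentially the same route as the paper: both directions of the image identity rest on Lemma \ref{F-trace_formulae}, and the five equivalences are routine consequences of the adjunction formulae. The only cosmetic differences are that you obtain surjectivity onto $\A(a_1,a_2)_F$ by applying Lemma \ref{F-trace_formulae}(2) directly to $\Psi^{(F,R)}(\delta)\circ(\Psi^{(L,F)})^{-1}(\gamma)$, whereas the paper first shows ${\rm id}_{F(b)}$ lies in the image and then invokes the two-sided-ideal property from parts (3) and (4) of that lemma, and that you link the splitting conditions (4),(5) to (3),(2) rather than to (1).
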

\begin{proof} Set $\Psi:=\Psi^{({L},F)}$, $\Psi':=\Psi^{(F,R)}$ and let
$\alpha = {\rm T}_F(\beta)$ for $\beta \in
{\B}({L}(a_1),R(a_2))$, then
$$\alpha = c^{(F,R)}\circ
\Psi(\beta)=\Psi'^{-1}(\beta)\circ u^{(L,F)}$$
with $\Psi(\beta)\in {\A}(a_1,FR(a_2))$ and
$\Psi'^{-1}(\beta)\in {\A}(FL(a_1),a_2)$,
hence
$$\alpha \in {\A}(a_1,a_2)_F\cap {\A}(a_1,a_2)_{FL(a_1)}\cap {\A}(a_1,a_2)_{FR(a_2)}.$$
On the other hand ${\rm T}_F(\Psi'({\rm id}_{F(b)}) \circ \Psi^{-1}({\rm id}_{F(b)})) =
{\rm id}\circ {\rm id} = {\rm id}_{F(b)}$. Now the ``two - sided - ideal" property of the
image of $F$-trace - maps, from Lemma \ref{F-trace_formulae} (3) and (4), implies that all morphisms factoring through $F$ belong to this image.
The implications (1) $\Rightarrow$ (2) and (3) follow from the proof above.
Clearly (2) or (3) imply (1) and (1) $\iff$ (4) as well as (1) $\iff$ (5)
follow from Lemma \ref{F-trace_formulae} (1).
This finishes the proof.
\end{proof}

\begin{df}\label{split_objects}
Let $(L,F)$ be an adjoint pair of functors with $L:\ \A\to\B$ and $F:\ \B\to\A$.
An object $a\in\A$ is called ($L$-) split if the unit morphism
$u^{(L,F)}_a:\ a\to FL(a)$ has a left inverse. An object
$b\in\B$ is called ($F$-) split if the co-unit map
$c^{(L,F)}_b:\ LF(b)\to b$ has a right inverse.
\end{df}

The following results will turn out to be useful:
\begin{prp}\label{s-prj_cat}
Let $(L,F)$ be as in Theorem \ref{adj_form} and assume that $F$ respects epimorphisms\footnote{
for example, if $F$ has a left- and a right adjoint}
(surjective maps). Then the following hold:
\begin{enumerate}
\item If $a\in\A$ is (s-)projective, then $L(a)$ is (s-)projective.
\item If $b\in\B$ is $F$-split and $F(b)\in\A$ is (s-)projective, then $b$ is (s-)projective.
\end{enumerate}
\end{prp}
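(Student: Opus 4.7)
The plan is to prove both parts by transporting liftings across the adjunction, using the unit, the counit and the natural bijection $\Psi = \Psi^{(L,F)}$ of Theorem \ref{adj_form}.

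For (1), I would start from a surjection $\beta : b_1 \to b_2$ in $\B$ and a morphism $\phi : L(a) \to b_2$. The hypothesis on $F$ makes $F(\beta) : F(b_1) \to F(b_2)$ surjective in $\A$, and the adjunction transforms $\phi$ into $\Psi(\phi) : a \to F(b_2)$. Since $a$ is s-projective, there exists $\tilde\psi : a \to F(b_1)$ with $F(\beta) \circ \tilde\psi = \Psi(\phi)$. Then $\Psi^{-1}(\tilde\psi) : L(a) \to b_1$ is the required lift, since formula $(1.1)$ of Theorem \ref{adj_form} gives $\Psi(\beta \circ \Psi^{-1}(\tilde\psi)) = F(\beta) \circ \Psi(\Psi^{-1}(\tilde\psi)) = F(\beta)\circ\tilde\psi = \Psi(\phi)$, and $\Psi$ is a bijection. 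The argument for ordinary projectivity is identical, provided $F$ also sends the relevant epimorphisms of $\B$ to epimorphisms of $\A$.

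For (2), fix a right inverse $s : b \to LF(b)$ of $c_b := c_b^{(L,F)}$, so that $c_b \circ s = \id_b$. Given a surjection $\beta : b_1 \to b_2$ and a morphism $\phi : b \to b_2$, apply $F$ to obtain the surjection $F(\beta) : F(b_1) \to F(b_2)$ together with $F(\phi) : F(b) \to F(b_2)$. Because $F(b)$ is s-projective, there is $\alpha : F(b) \to F(b_1)$ with $F(\beta) \circ \alpha = F(\phi)$. I then set $\tilde\phi := c_{b_1} \circ L(\alpha) \circ s \in \B(b, b_1)$ and use naturality of the counit twice to compute
$$\beta \circ \tilde\phi = \beta \circ c_{b_1} \circ L(\alpha) \circ s = c_{b_2} \circ LF(\beta) \circ L(\alpha) \circ s = c_{b_2} \circ LF(\phi) \circ s = \phi \circ c_b \circ s = \phi,$$
so $\tilde\phi$ lifts $\phi$ through $\beta$, proving that $b$ is s-projective.

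There is no real obstacle beyond bookkeeping: the only ingredients are the bijection $\Psi$, the formula $\Psi(\beta\circ\eta)=F(\beta)\circ\Psi(\eta)$ from $(1.1)$, and the naturality square $\beta \circ c_{b_1} = c_{b_2} \circ LF(\beta)$ of the counit. The parenthetical ``(s-)'' in the statement is handled uniformly, because in each case the same diagram chase converts a lift of $\Psi(\phi)$ (resp.\ $F(\phi)$) against $F(\beta)$ into a lift of $\phi$ against $\beta$.
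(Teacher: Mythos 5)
Your proof is correct and follows essentially the same route as the paper's: part (1) is the paper's argument with $\Gamma=\Psi^{-1}(\gamma)$ rewritten via the naturality formula $(1.1)$, and part (2) is exactly the paper's construction $\gamma'\circ s$ with $\gamma'=\Psi^{-1}(\gamma)=c_{b_1}\circ L(\gamma)$ unpacked into an explicit counit-naturality computation. No gaps.
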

\begin{proof}
(1):\ Let $\alpha\in\B(b_1,b_2)$ be an epimorphism (surjective morphism) and
$\beta\in\B(L(a),b_2)$. Then since $a$ is (s-) projective there is $\gamma\in\A(a,F(b_1))$
with $F(\alpha)\circ\gamma=F(\beta)\circ u^{(L,F)}_a$.
\begin{diagram}\label{gamma_diagram1}
a&\rTo^{u^{(L,F)}_a}                     &FL(a)\\
\dDashto^{\gamma}&\ &\dTo_{F(\beta)}\\
F(b_1)&\rOnto_{F(\alpha)}&F(b_2)\\
\end{diagram}
Consider the commutative diagram:
\begin{diagram}\label{Psi_diagram}
\A(a,F(b_1))&\rTo^{\Psi^{-1}}&\B(L(a),b_1)\\
\dTo^{F(\alpha)_*}&\ &\dTo_{\alpha_*}\\
\A(a,F(b_2))&\rTo^{\Psi^{-1}}&\B(L(a),b_2).\\
\end{diagram}
Set $\Gamma:=\Psi^{-1}(\gamma)\in\B(L(a),b_1)$; then
$\alpha\circ\Gamma=$ $\alpha_*\circ\Psi^{-1}(\gamma)=$
$\Psi^{-1}\circ F(\alpha)_*(\gamma)=$
$\Psi^{-1}(F(\beta)\circ u^{(L,F)}_a)=\beta$,
since $F(\beta)\circ u^{(L,F)}=(u^{(L,F)*}\circ F)(\beta)=\Psi(\beta)$.
This shows that $L(a)$ is (s-) projective.\\
(2):\ Let $\alpha\in\B(b,b_2)$ and $\beta\in\B(b_1,b_2)$ an epimorphism (surjective map).
Since $F(b)$ is (s-)projective there is $\gamma\in\A$ completing the diagram
\begin{diagram}\label{gamma_diagram2}
 &                     &F(b_1)\\
 &\ruDashto^{\gamma} &\dOnto_{F(\beta)}\\
F(b)&\rTo_{F(\alpha)}&F(b_2)\\
\end{diagram}
Let $s\in\B(b,LF(b))$ be a right inverse to $c^{(L,F)}_b$, then
applying $(\Psi^{(L,F)})^{-1}$ gives the following commutative diagram:
\begin{diagram}\label{psi_inv_gamma_diagram}
 &                     &b_1\\
 &\ruDashto^{\gamma'} &\dOnto_{\beta}\\
LF(b)&\rTo_{\alpha'}&b_2\\
\uTo^s\dTo_{c^{(L,F)}}&\ruDashto&\\
b\ &\ &
\end{diagram}
with $\gamma':=(\Psi^{(L,F)})^{-1}(\gamma)$ and $\alpha':=(\Psi^{(L,F)})^{-1}(F(\alpha))$.
Indeed, using the formulae of Theorem \ref{adj_form} we get
$\alpha'\circ s=$ $c^{(L,F)}\circ LF(\alpha)=\alpha\circ c^{(L,F)}\circ s=\alpha$.
This shows that $b$ (s-) projective.
\end{proof}

The following is a straightforward dualization of Proposition \ref{s-prj_cat}
\begin{prp}\label{i-inj_cat}
Let $(L,F)$ be as in Theorem \ref{adj_form} and assume that $L$ respects monomorphisms \footnote{
for example, if $F$ has a left- and a right adjoint}
(injective maps). Then the following hold:
\begin{enumerate}
\item If $b\in\B$ is (i-)injective, then $F(b)$ is (i-)injective.
\item If $a\in\A$ is $L$-split and $L(a)\in\B$ is (i-)injective, then $a$ is (i-)injective.
\end{enumerate}
\end{prp}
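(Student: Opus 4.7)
My plan is to dualize the proof of Proposition \ref{s-prj_cat} by systematically reversing every arrow: surjective morphisms become injective ones, pre-composition lifting is replaced by post-composition extension, and the roles played by the unit $u^{(L,F)}$ and the counit $c^{(L,F)}$ are interchanged whenever one passes through the adjunction.

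For part (1), I would start with a monomorphism $\alpha \in \A(a_1,a_2)$ and an arbitrary morphism $\beta \in \A(a_1,F(b))$, and seek $\gamma \in \A(a_2,F(b))$ with $\gamma \circ \alpha = \beta$. The natural move is to transport everything into $\B$: set $\beta' := \Psi^{-1}(\beta) = c^{(L,F)}_b \circ L(\beta) \in \B(L(a_1),b)$. Since $L$ respects monomorphisms by hypothesis, $L(\alpha)$ is injective, and the i-injectivity of $b$ supplies $\delta \in \B(L(a_2),b)$ with $\delta \circ L(\alpha) = \beta'$. Setting $\gamma := \Psi(\delta) = F(\delta) \circ u^{(L,F)}_{a_2}$, formula (1.1) of Theorem \ref{adj_form} gives
$$\gamma \circ \alpha = \Psi(\delta) \circ \alpha = \Psi(\delta \circ L(\alpha)) = \Psi(\beta') = \beta,$$
which is exactly the required extension.

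For part (2), I would fix a left inverse $s \in \A(FL(a),a)$ of $u^{(L,F)}_a$, a monomorphism $\alpha \in \A(a_1,a_2)$, and a morphism $\beta \in \A(a_1,a)$. Applying $L$ yields the monomorphism $L(\alpha) \in \B(L(a_1),L(a_2))$ together with $L(\beta) \in \B(L(a_1),L(a))$, and the i-injectivity of $L(a)$ produces $\delta \in \B(L(a_2),L(a))$ with $\delta \circ L(\alpha) = L(\beta)$. The natural candidate is then $\gamma := s \circ \Psi(\delta) \in \A(a_2,a)$. Its verification uses (1.1) together with the naturality of $u^{(L,F)}$, which gives $\Psi(L(\beta)) = FL(\beta) \circ u^{(L,F)}_{a_1} = u^{(L,F)}_a \circ \beta$; therefore
$$\gamma \circ \alpha = s \circ \Psi(\delta) \circ \alpha = s \circ \Psi(\delta \circ L(\alpha)) = s \circ \Psi(L(\beta)) = s \circ u^{(L,F)}_a \circ \beta = \beta.$$

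I do not anticipate any serious obstacle: the argument is pure formal manipulation of the adjunction data. The only point requiring care is keeping track of which side of the adjunction each object and morphism sits on, so that formula (1.1) is invoked in its correct variant; this bookkeeping asymmetry is precisely what makes the statement ``dual'' to, rather than literally the same as, Proposition \ref{s-prj_cat}.
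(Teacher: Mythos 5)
Your proof is correct and is precisely the ``straightforward dualization of Proposition \ref{s-prj_cat}'' that the paper itself offers as the entire proof: part (1) transports the extension problem across the adjunction via $\Psi$ and formula (1.1), and part (2) uses the splitting $s\circ u^{(L,F)}_a=\id_a$ together with $\Psi(L(\beta))=u^{(L,F)}_a\circ\beta$. Both computations check out, so nothing is missing.
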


\bibliographystyle{plain}

\end{document}